\documentclass[11pt]{article}

\usepackage{amsmath}
\usepackage{amsthm}
\usepackage{amsfonts}
\usepackage{setspace}
\usepackage{fullpage}
\usepackage{amssymb}
\usepackage{enumitem}
\usepackage{caption}

\usepackage{floatpag}
\usepackage[dvipsnames]{xcolor}

\usepackage[initials]{amsrefs} 

\usepackage{multirow}
\usepackage{float}
\usepackage{tikz} 
\usetikzlibrary{patterns,snakes} 
\usepackage{ifthen}
\usepackage{hyperref}
\hypersetup{colorlinks=true,
citecolor=blue,
filecolor=blue,
linkcolor=blue,
urlcolor=blue
}
\usepackage{cleveref}

\newboolean{colored}
\setboolean{colored}{false} 

\usepackage{comment}

\bibliographystyle{plain}

\newcounter{sec}  
\setcounter{sec}{0}
\newtheorem{lemma}{Lemma}[sec]

\newtheorem{theorem}{Theorem} 
\newtheorem{corollary}{Corollary}

\newtheorem{claim}{Claim}
\newtheorem{proposition}{Proposition}
\newtheorem{conjecture}{Conjecture}

\newcommand{\C}{\mathcal{C}}

\newcommand{\F}{\operatorname{Forb}}

\newcommand{\s}{\mathcal{S}}
\newcommand{\h}{\mathcal{H}}

\newcommand{\floor}[1]{\left\lfloor{#1}\right\rfloor}
\newcommand{\ceil}[1]{\left\lceil{#1}\right\rceil}

\setlength\parindent{3pt}

\ifthenelse{\boolean{colored}}{
\newcommand{\rby}{%
	\begin{tikzpicture}[thick, scale=0.05]
	\draw[red] (90:4) -- (210:4);
	\draw[yellow] (90:4)--(-30:4);
	\draw[blue] (210:4) -- (-30:4);
	\end{tikzpicture}
}
\newcommand{\rrr}{%
	\begin{tikzpicture}[thick, scale=0.05]
	\draw[red] (90:4) -- (210:4);
	\draw[red] (90:4)--(-30:4);
	\draw[red] (210:4) -- (-30:4);
	\end{tikzpicture}
}
\newcommand{\yyy}{%
	\begin{tikzpicture}[thick, scale=0.05]
	\draw[yellow] (90:4) -- (210:4);
	\draw[yellow] (90:4)--(-30:4);
	\draw[yellow] (210:4) -- (-30:4);
	\end{tikzpicture}
}
\newcommand{\bbb}{%
	\begin{tikzpicture}[thick, scale=0.05]
	\draw[blue] (90:4) -- (210:4);
	\draw[blue] (90:4)--(-30:4);
	\draw[blue] (210:4) -- (-30:4);
	\end{tikzpicture}
}
 \newcommand{\rrb}{%
	\begin{tikzpicture}[thick, scale=0.05]
	\draw[red] (90:4) -- (210:4);
	\draw[red] (90:4)--(-30:4);
	\draw[blue] (210:4) -- (-30:4);
	\end{tikzpicture}
}
 \newcommand{\rry}{%
	\begin{tikzpicture}[thick, scale=0.05]
	\draw[red] (90:4) -- (210:4);
	\draw[red] (90:4)--(-30:4);
	\draw[yellow] (210:4) -- (-30:4);
	\end{tikzpicture}
}
 \newcommand{\bbr}{%
	\begin{tikzpicture}[thick, scale=0.05]
	\draw[blue] (90:4) -- (210:4);
	\draw[blue] (90:4)--(-30:4);
	\draw[red] (210:4) -- (-30:4);
	\end{tikzpicture}
}
 \newcommand{\bby}{%
	\begin{tikzpicture}[thick, scale=0.05]
	\draw[blue] (90:4) -- (210:4);
	\draw[blue] (90:4)--(-30:4);
	\draw[yellow] (210:4) -- (-30:4);
	\end{tikzpicture}
}
 \newcommand{\yyr}{%
	\begin{tikzpicture}[thick, scale=0.05]
	\draw[yellow] (90:4) -- (210:4);
	\draw[yellow] (90:4)--(-30:4);
	\draw[red] (210:4) -- (-30:4);
	\end{tikzpicture}
}
 \newcommand{\yyb}{%
	\begin{tikzpicture}[thick, scale=0.05]
	\draw[yellow] (90:4) -- (210:4);
	\draw[yellow] (90:4)--(-30:4);
	\draw[blue] (210:4) -- (-30:4);
	\end{tikzpicture}
}
}
{
	
	\newcommand{\rby}{%
	\begin{tikzpicture}[scale=0.05]
	\draw[red, line width=0.55mm, densely dotted] (90:4) -- (210:4);
	\draw[Dandelion, snake, line width=0.5mm, segment length=0.8mm, segment amplitude=0.25mm] (90:4)--(-30:4);
	\draw[blue,line width=0.5mm] (210:4) -- (-30:4);
	\end{tikzpicture}
}
\newcommand{\rrr}{%
	\begin{tikzpicture}[scale=0.05]
	\draw[red, line width=0.55mm, densely dotted] (90:4) -- (210:4);
	\draw[red, line width=0.55mm, densely dotted] (90:4)--(-30:4);
	\draw[red, line width=0.55mm, densely dotted] (210:4) -- (-30:4);
	\end{tikzpicture}
}
\newcommand{\yyy}{%
	\begin{tikzpicture}[scale=0.05]
	\draw[Dandelion, snake, line width=0.5mm, segment length=0.8mm, segment amplitude=0.25mm] (90:4) -- (210:4);
	\draw[Dandelion, snake, line width=0.5mm, segment length=0.8mm, segment amplitude=0.25mm] (90:4)--(-30:4);
	\draw[Dandelion, snake, line width=0.5mm, segment length=0.8mm, segment amplitude=0.25mm] (210:4) -- (-30:4);
	\end{tikzpicture}
}
\newcommand{\bbb}{%
	\begin{tikzpicture}[scale=0.05]
	\draw[blue,line width=0.5mm] (90:4) -- (210:4);
	\draw[blue,line width=0.5mm] (90:4)--(-30:4);
	\draw[blue,line width=0.5mm] (210:4) -- (-30:4);
	\end{tikzpicture}
}
\newcommand{\rrb}{%
	\begin{tikzpicture}[scale=0.05]
	\draw[red, line width=0.55mm, densely dotted] (90:4) -- (210:4);
	\draw[red, line width=0.55mm, densely dotted] (90:4)--(-30:4);
	\draw[blue,line width=0.5mm] (210:4) -- (-30:4);
	\end{tikzpicture}
}
\newcommand{\rry}{%
	\begin{tikzpicture}[scale=0.05]
	\draw[red, line width=0.55mm, densely dotted] (90:4) -- (210:4);
	\draw[red, line width=0.55mm, densely dotted] (90:4)--(-30:4);
	\draw[Dandelion, snake, line width=0.5mm, segment length=0.8mm, segment amplitude=0.25mm] (210:4) -- (-30:4);
	\end{tikzpicture}
}
\newcommand{\bbr}{%
	\begin{tikzpicture}[scale=0.05]
	\draw[blue,line width=0.5mm] (90:4) -- (210:4);
	\draw[blue,line width=0.5mm] (90:4)--(-30:4);
	\draw[red, line width=0.55mm, densely dotted] (210:4) -- (-30:4);
	\end{tikzpicture}
}
\newcommand{\bby}{%
	\begin{tikzpicture}[scale=0.05]
	\draw[blue,line width=0.5mm] (90:4) -- (210:4);
	\draw[blue,line width=0.5mm] (90:4)--(-30:4);
	\draw[Dandelion, snake, line width=0.5mm, segment length=0.8mm, segment amplitude=0.25mm] (210:4) -- (-30:4);
	\end{tikzpicture}
}
\newcommand{\yyr}{%
	\begin{tikzpicture}[scale=0.05]
	\draw[Dandelion, snake, line width=0.5mm, segment length=0.8mm, segment amplitude=0.25mm] (90:4) -- (210:4);
	\draw[Dandelion, snake, line width=0.5mm, segment length=0.8mm, segment amplitude=0.25mm] (90:4)--(-30:4);
	\draw[red, line width=0.55mm, densely dotted] (210:4) -- (-30:4);
	\end{tikzpicture}
}
\newcommand{\yyb}{%
	\begin{tikzpicture}[scale=0.05]
	\draw[Dandelion, snake, line width=0.5mm, segment length=0.8mm, segment amplitude=0.25mm] (90:4) -- (210:4);
	\draw[Dandelion, snake, line width=0.5mm, segment length=0.8mm, segment amplitude=0.25mm] (90:4)--(-30:4);
	\draw[blue,line width=0.5mm] (210:4) -- (-30:4);
	\end{tikzpicture}
}
}

\newcommand{\kseven}{
	\begin{tikzpicture}
		\def \n {7} 
		\def \radius {1cm} 
		\foreach \s in {1,...,\n} { 
			\node[draw, circle, inner sep=1pt] (\s ) at ({360/\n * ( \s -0.25)}:\radius) {}; 
		}
		\draw[blue, line width = 0.5mm] (1)--(2)--(3)--(4)--(5)--(6)--(7)--(1);
		\draw[Dandelion, snake, line width=0.5mm, segment length=0.8mm, segment amplitude=0.25mm](1)--(3)--(5)--(7)--(2)--(4)--(6)--(1);
		\draw[red, line width = 0.5mm, densely dotted](7)--(3)--(6)--(2)--(5)--(1)--(4)--(7); 
	\end{tikzpicture}

}

\date{}

\newcounter{num}

\title{The Erd\H{o}s-Hajnal conjecture for three colors and triangles}
\date{\vspace{-5ex}}
\author{ 
Maria Axenovich
\thanks{
	Karlsruhe Institute of Technology, Karlsruhe, Germany;
	e-mail:
	\mbox{\texttt{maria.aksenovich@kit.edu}}\,
}     
\and
Richard Snyder
\thanks{
	Karlsruhe Institute of Technology, Karlsruhe, Germany;
	e-mail: \mbox{\texttt{rjsnyder23@gmail.com}}\,
}
\and
Lea Weber
\thanks{
	Karlsruhe Institute of Technology, Karlsruhe, Germany;
	e-mail: \mbox{\texttt{lea.weber@kit.edu}}\,
}
}

\begin{document}

\maketitle

\begin{abstract}
\setlength{\parskip}{\medskipamount}
\setlength{\parindent}{0pt}
\noindent
Erd\H{o}s and Szekeres's  quantitative version of Ramsey's theorem asserts that in every $2$-edge-coloring of $K_n$ there is  a monochromatic clique on at least $\frac{1}{2}\log n$ vertices. The famous Erd\H{ o}s-Hajnal conjecture claims that forbidding fixed colorings on subgraphs ensures much larger monochromatic cliques. Specifically, the most general, multi-color version of the conjecture states that for any fixed integers $k, s$ and any $s$-edge-coloring $c$ of $K_k$, there exists $\varepsilon > 0$ such that  in any $s$-edge-coloring of $K_n$ that avoids $c$ there is a clique on at least $n^{\varepsilon}$ vertices, using at most $s-1$ colors. 
The conjecture is open in general, though a few partial results are known.

Here, we focus on quantitative aspects of this conjecture in the case when $k = 3$ and $s = 3$, and when there are several forbidden subgraph colorings. More precisely, for a family $\mathcal{H}$ of triangles, each edge-colored with colors from $\{r, b, y\}$,  $\text{Forb}(n,\mathcal{H})$ denotes the family of  edge-colorings of $K_n$ using colors from $\{r, b, y\}$ and containing none of the colorings from $\h$. Let $h_2(n, \h)$ be the maximum $q$ such that any coloring from $\text{Forb}(n, \h)$  has a 
clique on at least $q$ vertices using at most two colors.    We provide bounds on $h_2(n, \h)$ for all families $\h$ consisting of at most three triangles. For most of them, our bounds are asymptotically tight.  This,  in particular, extends a result of  Fox, Grinshpun, and Pach, who determined $h_2(n, \h)$ for $\h$ consisting of a rainbow triangle. In addition, we prove that  for some $\h$,  $h_2(n, \h)$ corresponds to certain  classical Ramsey numbers,  smallest independence number in graphs of given odd girth, or  some other natural graph theoretic parameters.
\end{abstract}

\section*{Introduction}

Erd\H{o}s and Szekeres's  quantitative version \cite{ES} of Ramsey's theorem \cite{R}  asserts that any complete graph on $n$ vertices (also referred to as a clique) edge-colored with two colors has a monochromatic clique on at least $\frac{1}{2} \log n$ vertices.  When the coloring is assumed to have some additional structure, one can expect to find larger monochromatic cliques. Indeed, the famous Erd\H{o}s-Hajnal conjecture \cite{EH}  asserts  that for any fixed integer $k$ and any two edge-colored  clique $K$ on $k$ vertices,  there is a positive constant $a$ such that  in any two-edge-colored  complete $n$-vertex graph with no copy of  $K$,  there is a monochromatic clique on  at least $n^a$ vertices.  Erd\H{o}s and Hajnal \cite{EH}  proved that when $n^a$ is replaced by $e^{ \epsilon \sqrt{\log n}}$, then the statement holds for $\epsilon$ depending on $K$ only.   For a survey on this conjecture, see Chudnovsky \cite{C}.\\

In addition, Erd\H{o}s and Hajnal stated a multicolored version of this conjecture asserting that 
for any fixed integer $k \geq 3$  and for any fixed $s$-edge-colored clique $K$ on $k$ vertices, for $s\geq 2$,  there is a positive constant $a=a(K)$ such that any $s$-edge-coloring of a clique on $n$ vertices  with no copy of $K$ contains a clique on $\Omega(n^a)$ vertices using at most $s-1$ colors.  \\

Here, we say that a clique $K'$ edge-colored with a coloring $c$ contains a {\it copy of } (or simply contains)  a clique $K$  on a vertex set $\{1, \ldots, k\}$ with an edge-coloring $c'$, if there is a set of $k$ vertices  $\{v_1, \ldots, v_k\}$ in $K'$ and a bijection $\phi: \{1,\ldots, k\} \rightarrow \{v_1, \ldots, v_k\}$ such that $c'(ij)= c(\phi(i)\phi(j))$, for all $i, j$, $1\leq i<j\leq k$.   \\

In this paper, to avoid confusion, we will use the term \emph{pattern} for a forbidden coloring of a subgraph, so the term \emph{coloring} will refer to the coloring of a larger graph in which we forbid certain patterns. We consider the case when the number of colors is three and the forbidden patterns are imposed on triangles, but there could be more than one forbidden pattern.  Specifically, we investigate all sets of at most three patterns. We provide all our results in the Tables \ref{t1}, \ref{t2}, \ref{t3}. One can immediately see from Table 
\ref{t1} that the Erd\H{o}s-Hajnal conjecture holds true in this setting.  We focus on the quantitative version of the conjecture and  provide asymptotic bounds on the sizes of the largest $2$-edge-colored cliques.\\

All of the colorings considered here use colors $r, b$, and $y$, corresponding to `red', `blue', and `yellow'. 
The complete graph on $n$ vertices is denoted by $K_n$.
We call an edge-colored complete graph $K_k$ using at most two colors on its edges a {\it two-colored $k$-clique}.
We also call the set of vertices of a two-colored clique a {\it two-colored set}.
For a family $\mathcal H$ of patterns using colors $r, b, y$,  we define the class of $\mathcal H$-{\it avoiding edge-colorings}  of $K_n$ as a family using colors $r, b, y$, and containing none of the patterns from $\mathcal H$.  We denote the family of all $\mathcal H$-{avoiding edge-colorings}  by $\F(n, \mathcal H)$.\\

For an edge-coloring $c$, let
$$h_2(c) = \max\{k \in \mathbb N \ |\  c ~ \text{ contains a two-colored $k$-clique} \}   \text{, and }$$
$$h_2(n, \mathcal H)=\min\{h_2(c)\ |\ c \in \F(n, \mathcal H)\}.$$

In particular,  each edge-coloring of $K_n$ using three colors  and not containing patterns from $\mathcal H$ contains a clique on $h_2(n, \mathcal H)$ vertices using at most $2$ colors. In addition,  there is an $\mathcal H$-avoiding coloring with every clique on more  than $h_2(n, \mathcal H)$ vertices using all three colors. \\

We consider all sets  $\mathcal H$ of at most three  patterns on triangles using colors from  $\{r, b, y\}$. 
We also write, for example, $rrb$ to represent  a coloring of a triangle  with one  blue and two  red edges. 
These patterns are  $rrr, bbb, yyy,  rrb, rry, bbr, bby, yyr, yyb, rby$.  Note that if for two families $\h, \h'$ there is a permutation $\pi$ of colors, such that $\h'$ is obtained by applying $\pi$ to each pattern in $\h$, we have $h_2(n, \mathcal H) = h_2(n, \mathcal H')$.  If for two sets of avoiding patterns, $\mathcal H $ and  $\mathcal H'$, we have that   $\mathcal H \subseteq \mathcal H'$, then $h_2(n, \mathcal H) \le h_2(n, \mathcal H')$. Indeed, this holds since any  $\mathcal H'$-avoiding coloring is also an $\mathcal H$-avoiding coloring.\\

Two of the entries of our tables are expressed in terms of functions $f(n)$ and $g(n)$ that are interesting in their own right. We denote by $\alpha(G)$ and $\omega(G)$ the \emph{independence number} and the \emph{clique number}, respectively, of a graph $G$ (i.e., the largest integers $\alpha$ and $\omega$ such that $G$ contains a set of $\alpha$ vertices inducing no edges, and a set of $\omega$ vertices inducing a clique).  For a graph $G$,  let $G^2$ be the square of $G$, i.e., the graph on the same vertex set as $G$ with two vertices adjacent if and only if they are at distance at most $2$ in $G$. Let  $$f(G)= \max \{\alpha(G),\omega(G^2)\} ~\text{ and } f(n)= \min \{f(G): ~ |G|=n,~ \omega(G)=2\}.$$ Further, recall that the \emph{odd girth}, $g_{\rm odd}(G)$,  of a graph $G$ is the length of a shortest odd cycle in $G$. 
Let $$g(n)= \min\{\alpha(G):  ~|G|=n, ~ g_{\rm odd}(G) \geq 7\}.$$

\begin{figure}
	\thisfloatpagestyle{empty}
		\captionsetup{justification=centering}
\begin{table}[H]
\begin{center}
	\begin{tabular}{l|l|l} 
		$\mathcal H$ & $h_2(n, \mathcal H)$  &  Results  \\ \hline
		\hline 
		$\{\rby\}$ & $\Theta( n^{1/3}\log^2 n)$ & \cite{FGP}\\ 
		\hline 
		$\{\rrr\}$ & $\Theta(\sqrt{n \log n}) $ & \Cref{lem1.rrr}\\
		\hline 
		$\{\rrb\}$ & $\ceil{\sqrt n}$ & \Cref{lem1.rrb}\\
		\hline 
	\end{tabular}
\end{center}
\caption{Bounds on $h_2(n, \h)$ for families $\h$  of one pattern on a triangle} \label{t1}
\end{table}

\begin{table}[H]
\begin{center}
	\begin{tabular}{l|l|l} 	
		$\mathcal H$ & $h_2(n, \mathcal H)$  &  Results  \\ \hline	
		\hline 
			$\{\rrb, \rry\}, \{\rrb, \bbr\} , \{\rrb,\bby\},  \{\rry, \bby\}  $ & $\ceil{\sqrt n}$ & \Cref{lem2.1} \\
			\hline 
			$\{\rby, \rrb\}$ & $\Theta(\sqrt n)$& \Cref{lem2.2}\\
			\hline 
		$\{ \rrr, \bby\}, \{ \rrr, \bbr\}, \{\rrr, \bbb \}$ & $\Theta(\sqrt {n \log n})$ &  Lemmata \ref{lem2.3}, \ref{lem2.5} \\
		\hline
		\multirow{2}{*} {$\{\rrr, \rrb \}$} & $\Omega(\sqrt{n \log n}), O(\sqrt{n} \log^{3/2} n)$ & \Cref{lem2.4}\\
		& $f(n) \le h_2(n, \h) \le 2 f(n)$ & \Cref{lem2.f1} \\
		\hline 
		$\{\rby, \rrr\}$ &  $\Omega(n^{2/3}\log^{-3/2} n),  O(n^{2/3}\sqrt{\log n})$ & \Cref{lem2.6} \\
		\hline
	\end{tabular} 
\end{center}
\caption{Bounds on $h_2(n, \h)$ for families $\h$  of two patterns on triangles} \label{t2}
\end{table}

\begin{table}[H]
\begin{center}

	\begin{tabular}{l|l|l} 	
		$\mathcal H$ & $h_2(n, \mathcal H)$  &  Results  \\ \hline
		\hline 		
				$\{\rrb, \rry, \bby\}, \{\rrb, \rry, \bbr\}$ & $\ceil{\sqrt n}$ & \Cref{lem3.1} \\ 
				\hline 
		$\{\rrb, \bbr, \yyr\}, \{\rrb, \bby, \yyr\}$ & $\ceil{n/2}$& Lemmata \ref{lem3.2}, \ref{lem3.3} \\
		\hline 
		$\{\rrb, \bbr, \rby\}$ & $\ceil{n/2}+1$ & \Cref{lem3.5} \\
		\hline 
		$\{\rrb, \rry, \rby\}, \{\rry, \bby, \rby\}$ & $\Omega(\sqrt{n}), O(\sqrt{n\log n})$ & Lemmata \ref{lem3.4}, \ref{lem3.6}, \\
		\hline 
		$\{\rrb, \bby, \rby\}$ & $\Theta(n^{2/3})$ & \Cref{lem3.7} \\
		\hline 
		$\{\rrr, \bbb, \yyy \}$ & no $\h$-avoiding col. for $n\ge 17$ & \cite{GG} \\
		\hline 
		$\{\rrr, \bbb, \rrb \}$ & $\Omega(\sqrt{n \log n})$, $O(\sqrt{n} \log^{3/2} n)$ & \Cref{lem3.rrr.bbb.rrb}\\
		\hline 
		$\{\rrr, \bbb, \rry \}$ & $2\floor{n/5} + \epsilon(n)$ & \Cref{lem3.rrr.bbb.rry}\\
		\hline 
		$\{\rrr, \bbb, \yyr \}$ & $\ceil{(n-1)/3}\le h_2(n, \h) \le 2\ceil{n/5}$ & \Cref{lem3.rrr.bbb.yyr} \\
		\hline 
		$\{\rrr, \rrb, \rry\}, \{\rrr, \bbr, \yyr\},\{\rrr, \bby, \yyb \}$ & $\ceil{n/2}$ &  Lemmata \ref{lem3.rrr.rrb.rry}, \ref{lem3.rrr.bbr.yyr}, \ref{lem3.rrr.bby.yyb}  \\
		\hline 
		$\{\rrr, \rrb, \bbr \}$ &  $\Omega(\sqrt{n \log n})$, $O( \sqrt{n} \log^{3/2} n)$ & \Cref{lem3.rrr.rrb.bbr} \\
		\hline 
		$\{\rrr, \rrb, \bby \}$ & $\Omega(\sqrt{n \log n})$,  $O( n^{2/3} \sqrt{\log n})$ & \Cref{lem3.rrr.rrb.bby}\\
		\hline 
		\multirow{2}{*}{$\{\rrr, \rrb, \yyr \}$} & $\Omega(n^{2/3}\log^{1/3}n)$,  $O(n^{3/4}\log n)$ & \Cref{lem3.rrr.rrb.yyr} \\
		 & $g(n) \le h_2(n, \h) \le 2 g(n)$ & \Cref{lem:g(n)} \\ 
		 \hline 
		$\{\rrr, \rrb, \yyb\}$ & $\Omega(n^{2/3} \log^{-1/3} n)$, $O( n^{2/3} \sqrt{\log n})$ & \Cref{lem3.rrr.rrb.yyb} \\
		\hline 
			$\{\rrr, \bbr, \bby\}$ & $\Theta(\sqrt{n\log n})$  & \Cref{lem3.rrr.bbr.bby}\\
		\hline
		$\{\rrr, \bbr, \yyb \}$ & $\ceil{3n/7} + \epsilon_1(n)$ & \Cref{lem3.rrr.bbr.yyb} \\
		\hline 
		$\{\rby, \rrr,\bbb\}$ & $\Omega(n^{3/4}\log^{-3/2} n),  O(n^{3/4}\sqrt{\log n})$ & \Cref{lem3.rby.rrr.bbb} \\
		\hline 
		$\{\rby, \rrr, \bbr \}, \{\rby, \rrr, \bby \}, \{\rby, \rrr, \rrb \} $ & $\Omega(n^{2/3}\log^{-2} n),  O(n^{2/3}\sqrt{\log n})$  & \Cref{lem3.rby.rrr.rest} \\
		\hline
	\end{tabular} 
\end{center}
\caption{Bounds on $h_2(n, \h)$ for families $\h$  of three patterns on triangles \newline 
	$\epsilon(n) = 0$ if $n \equiv 0 \pmod 5$, $\epsilon(n) = 1$ if $n \equiv 1$, and $\epsilon(n) = 2$ otherwise;\newline $\epsilon_1(n) = 1 $ if $n \equiv 2 \pmod{7}$ and $\epsilon_1(n) = 0$, otherwise.
} \label{t3}
\end{table} 
\end{figure}

The paper is structured as follows. 
The Appendix provides a verification that all the sets of patterns have been accounted for. 
We give classical and preliminary results, and more definitions in Section \ref{Preliminaries}. Section \ref{Constructions}
contains most of the constructions we use, and hence yields the upper bounds on $h_2(n, \mathcal{H})$ listed in Tables~\ref{t1}, \ref{t2}, \ref{t3}. The remaining part of the paper provides lemmas and their proofs for the corresponding lower bounds on $h_2(n, \mathcal{H})$. Section \ref{Conclusions} provides final remarks and open questions.

We remark that `$\log$' for us always denotes the base $2$ logarithm. \ifthenelse{\boolean{colored}}{}{We draw red as dotted lines, blue as straight lines and yellow as wavy lines.}

\section{Connections to other results, preliminary results, and more definitions}\label{Preliminaries}

The conclusion of the multicolor Erd\H{o}s-Hajnal conjecture could be restated as: there is a positive constant $a=a(K)$ such that in  any $s$-edge-coloring of a clique on $n$ vertices with no copy of $K$ there is a color class with independence number $\Omega(n^a)$.
Thus, the multicolor Erd\H{o}s-Hajnal conjecture not only extends the respective conjecture for graphs, but puts the problem in the framework of Ramsey problems defined  through some parameter $p$, where one seeks a largest clique colored with a fixed number of colors, such that the parameter $p$ of each color class is bounded by a given number. For example, the classical Ramsey theorems are stated for the parameter $p$ being equal to the clique number, while the Erd\H{o}s-Hajnal conjecture has a formulation as a Ramsey number with parameter $p$, for $p$ equal to the independence number; see other papers \cite{CSh, EP, FKS, M, L, KS, KPPR}, where Ramsey problems with parameter $p$ have been considered for $p$ equal to the diameter, the minimum degree,  the connectivity, and the chromatic number. Fox, Grinshpun, and Pach \cite{FGP} addressed the multi\-colored  Erd\H{o}s-Hajnal  conjecture when $K$ is a rainbow triangle, i.e., a complete graph on $3$ vertices edge-colored using three distinct colors. Among other results, they proved that any such coloring with $s=3$ colors contains a clique using  at most $2$ colors that has order at least $\Omega(n^{1/3} \log^2 n)$. Moreover, they showed that this bound is tight.\\

For a given coloring $c$ in $r, b, y$, we denote by $S_{rb}^c$, $S_{ry}^c$, and $S_{by}^c$ the size of a largest clique using only colors from $\{r,b\}$, only from $
\{r,y\}$, and only from $\{b,y\}$, respectively. 
A pattern is \emph{monochromatic} if only one color occurs, i.e.,  $rrr$, $bbb$ or $yyy$.  
For a subset of colors, e.g., $\{r, b\}$, we say that a graph with all edges colored $r$ or $b$ is a {\it red/blue graph} (or a {\it blue/red graph}). 
If this graph is a clique, we refer to it as a {\it red/blue clique}.
If a pattern is not rainbow, the color used on more than one edge is called the {\it majority color}. For any given color, e.g., red, we say that a vertex $v$  has {\it red  degree}  $k$ if it is incident to exactly $k$ red edges. We denote by $N_r(v)$ the \emph{red neighborhood} of $v$, the set of all vertices joined to $v$ by edges colored red.\\

For positive integers $k$ and $\ell$, we let  $R(k, \ell)$ denote the usual Ramsey number for cliques, i.e., the smallest $N$ such that no matter how the edge set of $K_N$ is colored with red and blue, there is a red $K_k$ or a blue $K_\ell$ in this coloring. For graphs $F$ and $F'$, we let $R(F, F')$ denote the graph Ramsey number of $F$ and $F'$, i.e., the smallest $N$ such that no matter how the edge set of $K_N$ is colored with red and blue, there is a red copy of $F$ or a blue copy of $F'$ in this coloring. Here a copy of $F$ is a graph isomorphic to $F$. This definition easily extends to the case when either $F$ or $F'$ is replaced by a finite family of graphs. 
As usual, we let $V(G), E(G), \chi(G)$, $\alpha(G)$, $\omega(G)$, and $\Delta(G)$ denote the vertex set, edge set, chromatic number, independence number, clique number,  and the maximum degree of a graph $G$, respectively.  For a clique with vertex set $V$, we say that $|V|$ is the {\it size of the clique}, and we often write~``a clique $V$" instead of~``a clique on the vertex set $V$". Most of the colorings we define in \Cref{Constructions} are obtained by `blowing up' other colorings. We make this precise as follows. Suppose $c$ is an edge-coloring of $K_k$ on vertex set $[k]$ and let $V_1, \ldots, V_k$ be nonempty, pairwise disjoint sets. The $(V_1, \ldots, V_k)$\emph{-blowup} of $c$ is the edge-coloring of the complete $k$-partite graph with parts $V_i$ for $i \in [k]$, such that all edges between $V_i$ and $V_j$ have color $c(ij)$ for all $1 \le i < j \le k$.\\

\noindent\textbf{Observation 1:} Let $c_1, c_2$ be 3-edge-colorings of $K_{n_1}, K_{n_2}$. Now let $V_1, \ldots, V_{n_1}$ be vertex disjoint sets of size $|V_i| = n_2$ each and consider the $(V_1, \ldots, V_{n_1})$-blowup of $c_1$, where each $V_i$ is colored according to $c_2$. Let the resulting coloring be $c$.
Then we have $S_{k}^c = S_k^{c_1}\cdot S_k^{c_2} $ for any $k \in \{rb, ry, by\}$.\\

A graph is {\it triangle-free} if it does not contain $K_3$ as a subgraph.  We shall need some results about triangle-free graphs and certain Ramsey numbers.

\begin{theorem}[Kim \cite{K}] \label{thm:K}
For every sufficiently large $n\in \mathbb N$ there exists a triangle-free graph $G$ on $n$ vertices with 
$\alpha(G) \le 9 \sqrt{n \log n}.$
\end{theorem}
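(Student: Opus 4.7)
The plan is to construct $G$ as the output of the \emph{triangle-free random process} of Bollob\'as and Erd\H{o}s: list the pairs of $[n]$ in a uniformly random order $e_1, e_2, \ldots, e_{\binom{n}{2}}$ and add each $e_i$ as an edge of the current graph if and only if doing so does not create a triangle. The process terminates at a maximal triangle-free graph $G$, and I aim to show that with positive probability $\alpha(G) \le 9\sqrt{n \log n}$.

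The analytic engine is the differential equation method applied to this evolution. At step $t$ I track the number $Q(t)$ of \emph{open pairs} (non-adjacent pairs with no common neighbor) and the degree $D_v(t)$ of every vertex $v$; a heuristic scaling with continuous time $s = t / n^{3/2}$ predicts that $Q(t)/n^2$ and $D_v(t)/\sqrt{n}$ converge to deterministic trajectories $q(s), d(s)$ satisfying explicit ODEs. Concentration of the true variables around these trajectories, up to a termination time $s_{\ast} \asymp \sqrt{\log n}$, is established by constructing supermartingales that measure deviations from the deterministic prediction and applying Freedman's inequality, while simultaneously tracking auxiliary statistics such as the numbers of paths of length two on which the evolutions of $Q$ and the $D_v$ depend.

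At termination, every non-edge has a common neighbor, $\Delta(G) = O(\sqrt{n \log n})$, and neighborhoods exhibit a quasi-random structure. To extract the bound on $\alpha(G)$, fix $s_0$ large and a candidate independent set $S$ with $|S| = s_0 \sqrt{n \log n}$; all $\binom{|S|}{2}$ non-edges inside $S$ must be closed by vertices outside $S$, and the quasi-randomness of the neighborhoods bounds how many pairs in $S$ any outside vertex can cover, forcing $S$ to contain an edge of $G$. A union bound over the $\binom{n}{|S|} \le 2^n$ choices of $S$ then yields $\alpha(G) \le 9 \sqrt{n \log n}$ with positive probability.

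The principal obstacle is maintaining simultaneous $2^{-\Omega(n)}$-strength concentration of many dependent statistics throughout $\Theta(n^{3/2} \sqrt{\log n})$ random decisions. Plain Azuma bounds are insufficient to absorb the union bound in the final step, so one must bootstrap a family of supermartingales with small increments via Freedman's inequality, stopping the argument just before the ODE heuristic breaks down; coordinating these tail estimates so that each tracked quantity stays near its trajectory long enough for the next is the substance of Kim's original proof.
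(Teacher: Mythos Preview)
The paper does not prove this theorem at all: it is stated with attribution to Kim~\cite{K} and invoked as a black box (for instance in Constructions~\ref{const4} and~\ref{const:r-b-only}). There is therefore no ``paper's own proof'' to compare your outline against.

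As a side remark, the sketch you give is not Kim's argument. Kim's 1995 proof uses the semi-random (nibble) method: the graph is built in a logarithmic number of rounds, each round adding a sparse random graph on the currently ``open'' pairs and then deleting a few edges to restore triangle-freeness, with codegrees and open-pair counts controlled by concentration inequalities round by round. What you describe---the triangle-free process analyzed via the differential equation method, supermartingales, and Freedman's inequality---is Bohman's 2009 approach~\cite{B}, later sharpened by Bohman--Keevash~\cite{BK} and others. Both routes yield the stated bound, so your outline is a legitimate strategy, but your closing sentence misattributes it. Also, your final step (at termination every non-edge has a common neighbor, so bound how many pairs of $S$ an outside vertex can cover) is not how either proof actually bounds $\alpha(G)$: in the process analysis one instead tracks, for each candidate set $S$, the number of open pairs inside $S$ dynamically and shows that with very high probability an edge lands in $S$ well before the process freezes, which is what makes the union bound over all $S$ go through.
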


The following result gives a corresponding lower bound for the independence number of triangle-free graphs:

\begin{theorem}[Ajtai et al.  \cite{AKS}]\label{thm:AKS}
For any integer $t\geq 3$, we have $R(3,t) = O({t^2}/{\log t})$. That is, there is a constant $C$, such that in any
red/blue edge-coloring of $K_n$ with $n=C{t^2}/{\log t}$  there is either a red $K_3$ or a blue $K_t$.
\end{theorem}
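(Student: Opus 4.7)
The plan is to reformulate the Ramsey bound as a density statement about triangle-free graphs and prove that. If a red/blue coloring of $K_N$ contains no red $K_3$ and no blue $K_t$, then the red graph $G$ is triangle-free on $N$ vertices with $\alpha(G) < t$. Hence $R(3,t) = O(t^2/\log t)$ follows from the key lemma that every triangle-free graph $G$ on $n$ vertices with maximum degree $\Delta \ge 2$ satisfies $\alpha(G) \ge c\, n\log\Delta/\Delta$ for an absolute constant $c > 0$.

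Given this lemma, the Ramsey bound follows quickly. Since the neighborhood of any vertex in a triangle-free graph is independent, $\Delta(G) \le \alpha(G) < t$. Ignoring the trivial case of bounded $\Delta$ (handled by $n \le (\Delta+1)\alpha$), one obtains
\[
n \le \frac{\alpha(G)\,\Delta}{c\,\log \Delta} \le \frac{t^2}{c\,\log t},
\]
as desired.

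For the key lemma I would run a semi-random greedy procedure: repeatedly pick a random vertex $v$, add it to the growing independent set $I$, delete $\{v\}\cup N(v)$, and recurse on the remainder. A naive expectation yields only the trivial bound $\mathbb{E}[|I|] \ge n/(\Delta+1)$; the extra $\log \Delta$ factor, which is the real content of the theorem, comes from exploiting triangle-freeness globally. When $v$ is chosen, $N(v)$ is itself independent, so the deletion simultaneously removes many edges incident to high-degree vertices and the induced degrees in the remaining graph drop by a substantial amount on average. Tracking a convex weight function $w(d) \sim (\log d)/d$ in the style of Shearer, and verifying that $\sum_{u} w(d_G(u))$ decreases by at most $O(1)$ per deletion step in expectation, produces the amplified bound.

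The main obstacle is exactly this logarithmic amplification. A vanilla greedy or random deletion analysis only delivers the $n/\Delta$ bound; extracting the extra $\log \Delta$ requires choosing the right potential function and using triangle-freeness not merely to make $N(v)$ independent but to control the conditional expected drop of the potential after one step. Once that one-step inequality is established, the lemma falls out from a telescoping argument over the random process, and the Ramsey bound $R(3,t)=O(t^2/\log t)$ follows from the short deduction above.
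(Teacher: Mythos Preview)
The paper does not give a proof of this statement at all: Theorem~\ref{thm:AKS} is quoted as a known result of Ajtai, Koml\'os, and Szemer\'edi and is used as a black box (e.g., in Corollary~\ref{cor:AKS} and in the proofs of Lemmata~\ref{lem1.rrr}, \ref{lem2.6}, \ref{lem3.rrr.rrb.yyb}). So there is nothing in the paper to compare your proposal against.

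Taken on its own merits, your outline is the standard route (the Shearer refinement of the AKS semi-random argument), and the reduction from the Ramsey statement to the independence-number lemma is correct, including the case split on~$\Delta$. The honest gap you yourself flag --- the one-step potential inequality for a weight $w(d)\sim (\log d)/d$ that yields the extra $\log \Delta$ --- is precisely the nontrivial part; everything else is bookkeeping. If you wanted to make this self-contained you would need to actually state and verify that inequality, but since the paper treats the theorem as a citation, a detailed proof is not expected here.
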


Translating the above Theorem into the language of independent sets in triangle-free graphs, we have:
\begin{corollary}\label{cor:AKS}
For any triangle-free graph $G$ on $n$ vertices, $\alpha(G) = \Omega (\sqrt{n\log n})$.
\end{corollary}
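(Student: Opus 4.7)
The plan is to apply Theorem \ref{thm:AKS} directly. Given a triangle-free graph $G$ on $n$ vertices, I would consider the red/blue edge-coloring of $K_n$ obtained by coloring edges of $G$ red and non-edges of $G$ blue. A red triangle in this coloring would correspond to a triangle in $G$, which does not exist by hypothesis. A blue clique of size $t$, on the other hand, corresponds exactly to an independent set in $G$ of size $t$. Thus to prove the corollary it suffices to show that, with the constant $C$ provided by Theorem \ref{thm:AKS}, one can choose $t = \Omega(\sqrt{n \log n})$ with $n \ge C t^2/\log t$, so that Theorem \ref{thm:AKS} forces the existence of a blue $K_t$.

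The choice of $t$ is a routine calculation: setting $t = c\sqrt{n \log n}$ for a suitable constant $c > 0$, we have $\log t = \tfrac{1}{2}\log n + O(1)$, so
\[
\frac{Ct^2}{\log t} = \frac{C c^2 n \log n}{\tfrac{1}{2}\log n + O(1)} = (2Cc^2 + o(1))\, n,
\]
which is at most $n$ for all sufficiently large $n$, provided $c < 1/\sqrt{2C}$. For such $t$, Theorem \ref{thm:AKS} applied to our red/blue coloring yields a blue $K_t$, and hence $\alpha(G) \ge t = \Omega(\sqrt{n\log n})$, as desired. Small values of $n$ can be absorbed into the implicit constant.

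There is no real obstacle here: the corollary is just a reformulation of Theorem \ref{thm:AKS} in graph-theoretic language, together with an inversion of the relation $R(3,t) \le Ct^2/\log t$ to extract an independent-set lower bound from the vertex count. The only mildly delicate point is verifying that the inversion gives the claimed $\sqrt{n \log n}$ order, which follows from the fact that $t^2/\log t$ is increasing for $t \ge 3$ and behaves like $2t^2/\log n$ when $t$ is polynomial in $\sqrt{n}$.
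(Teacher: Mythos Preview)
Your argument is correct and is exactly the translation the paper has in mind: the paper states the corollary immediately after Theorem~\ref{thm:AKS} with the remark ``Translating the above Theorem into the language of independent sets in triangle-free graphs,'' and gives no further proof. Your red/blue coloring and the inversion of $R(3,t)\le Ct^2/\log t$ simply make that translation explicit.
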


The following result provides an upper bound on the chromatic number of any $n$-vertex triangle-free graph.
\begin{theorem}[Poljak and Tuza \cite{PT}]\label{thm:PT}
For any triangle-free graph $G$ on $n$ vertices, $\chi(G) \le 4 \sqrt{n/\log n}.$
\end{theorem}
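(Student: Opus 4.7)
The plan is to iterate the Ajtai--Komlós--Szemerédi bound from Corollary~\ref{cor:AKS}: repeatedly extract a maximum independent set, color it with a new color, and recurse on the remaining graph, which remains triangle-free. This reduces the chromatic number bound to a purely quantitative question about how many iterations are needed to exhaust the vertex set.

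Let $f(n)$ denote the maximum of $\chi(G)$ over all triangle-free graphs on $n$ vertices. By Corollary~\ref{cor:AKS} there exists a constant $c > 0$ such that every triangle-free graph on $m$ vertices has an independent set of size at least $c\sqrt{m \log m}$, once $m$ exceeds some threshold $m_0$. Removing such a set from a triangle-free graph on $n$ vertices yields a triangle-free graph on at most $n - c\sqrt{n\log n}$ vertices, giving the recursion
\[
f(n) \;\le\; 1 + f\!\left(n - c\sqrt{n \log n}\right).
\]
I would then prove by induction on $n$ that $f(n) \le K\sqrt{n/\log n}$ for a suitable constant $K$ depending on $c$. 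Plugging in the inductive hypothesis and using the estimates $\log(n - c\sqrt{n\log n}) = (1-o(1))\log n$ together with $\sqrt{1 - x} \le 1 - x/2$ applied to $x = c\sqrt{\log n/n}$, the right-hand side simplifies to at most
\[
1 + K\sqrt{\tfrac{n}{\log n}} - \tfrac{cK}{2} + o(1),
\]
which is at most $K\sqrt{n/\log n}$ provided $K \ge 2/c$. The base case $n \le m_0$ is absorbed by enlarging $K$ so that $K\sqrt{n/\log n}$ dominates the trivial bound $\chi(G) \le n$ in this bounded range.

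The structural skeleton of the argument is a clean induction on $n$, so the main obstacles are quantitative. First, the error terms arising from approximating $\log(n - c\sqrt{n\log n})$ by $\log n$ and from the Taylor expansion of $\sqrt{1-x}$ must be controlled uniformly to ensure that the inductive step actually closes rather than losing a growing additive term. Second, and more importantly, pinning down the explicit constant $4$ in $4\sqrt{n/\log n}$ requires feeding in an essentially sharp version of the AKS bound $R(3,t) \lesssim t^2/\log t$, since the constant $K = 2/c$ produced by the induction is only as good as the constant $c$ one extracts from Corollary~\ref{cor:AKS}.
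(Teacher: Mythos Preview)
The paper does not prove Theorem~\ref{thm:PT}; it is quoted as an external result of Poljak and Tuza with a citation and no proof, so there is no proof in the paper to compare yours against. Your greedy-extraction argument via Corollary~\ref{cor:AKS} is correct and is in fact the standard derivation of the bound $\chi(G)=O(\sqrt{n/\log n})$ for triangle-free $G$; the recursion and the inductive verification are both sound, and you have correctly isolated the two places where care is needed (the $\log$ error term and the dependence of the final constant on the constant in AKS). As you note yourself, your argument as written gives only $\chi(G)\le K\sqrt{n/\log n}$ for some unspecified $K$ determined by the AKS constant; obtaining the explicit $4$ of Poljak and Tuza requires either a sharper input constant or a more careful two-phase analysis, so strictly speaking your sketch proves the theorem up to the leading constant rather than the stated inequality.
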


We shall also need some known bounds on  Ramsey numbers. We list them here:
$2^{k/2} \leq R(k, k) \leq 4^{k}$, Erd\H{o}s~\cite{E}, Erd\H{o}s and Szekeres~\cite{ES};  
$R(3,k) = \Omega(k^2/{\log k})$, Kim~\cite{K}, Ajtai et al.~\cite{AKS};
$R(4, k) =\Omega(k^{5/2}/\log^2 k)$, Bohman~\cite{B};  $R(C_5, K_k) = O(k^{3/2}/\sqrt{\log k})$, Caro et al.~\cite{CL}; 
$R(3,3,3)= 17$, Greenwood and  Gleason~\cite{GG}; 
$R(\{C_3,C_4,C_5\}, K_k) = \Omega((k/\log k)^{3/4})$,  Spencer~\cite{Sp}. \\

\noindent\textbf{Observation 2:} A lower bound on some Ramsey number $R(s,t)$ corresponds to the existence of a 2-edge-coloring of $K_n$ with no red $K_s$ and no blue $K_t$. 
For example, $R(3,k) = \Omega(k^2/{\log k})$ gives the existence of a red/blue edge-coloring of 
$K_n$ with no red $K_3$ and no blue $K_t$ for any  $t>C \sqrt{n \log n}$ for some constant $C$.\\

A pattern containing all three colors $r$, $b$, and $y$ is called \emph{rainbow}.  Colorings not containing rainbow triangles are called \emph{Gallai colorings}. We will need the following fundamental theorem, which asserts that Gallai colorings have a specific structure.

\begin{theorem}[Gallai \cite{G}]\label{thm:G}
In any Gallai coloring of the complete graph on at least two vertices, the vertex set can be partitioned into at least two non-empty parts such that
\begin{itemize}
    \item for any two distinct parts, all edges between  them are of the same color;
    \item the total number of colors used between parts is at most two.
\end{itemize}

\end{theorem}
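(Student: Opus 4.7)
The plan is to prove Gallai's theorem by a structural analysis of one monochromatic subgraph, exploiting the absence of rainbow triangles. Say the coloring $c$ of $K_n$ uses colors from a set $C$. If $|C|\le 2$, the partition into singletons trivially satisfies both conditions, so the interesting case is $|C|\ge 3$.

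My main structural claim is: there exists a color $\gamma\in C$ whose monochromatic subgraph $G_\gamma$ is disconnected. Granted this, I take the partition $V_1,\ldots,V_k$ ($k\ge 2$) to be the connected components of $G_\gamma$; every edge between distinct parts avoids color $\gamma$. To verify the first condition, consider two parts $V_i, V_j$ and two edges $u_1v_1,u_2v_2$ between them (with $u_\ell\in V_i$, $v_\ell\in V_j$). I trace a $\gamma$-path in $V_i$ from $u_1$ to $u_2$ and walk along it, using the no-rainbow condition at each consecutive triangle with $v_1$ to propagate the color of $u_1v_1$ step by step to $u_2v_1$; a symmetric argument inside $V_j$ forces $u_2v_1$ and $u_2v_2$ to share a color. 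For the second condition, if three distinct colors $\alpha,\beta,\delta\neq\gamma$ appeared between parts, I would find three parts realizing these colors pairwise and pick one vertex from each, yielding a rainbow triangle.

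To establish the structural claim itself, I would proceed by contradiction: suppose every $G_\gamma$ is connected. Fix any color $\gamma$ actually used, any non-$\gamma$ edge $xy$ with $c(xy)=\alpha$, and any $\gamma$-edge $yz$. The triangle $xyz$ is not rainbow, so $c(xz)\in\{\gamma,\alpha\}$. Iterating this along a spanning tree of $G_\gamma$ rooted at $y$, I get that every non-$\gamma$ edge incident to $x$ has color $\alpha$. Applying the same reasoning to another vertex $x'$ with $c(xx')\ne\gamma$ forces all non-$\gamma$ edges at $x'$ to share a single color as well; the Gallai condition on triangles containing $xx'$ then identifies this color with $\alpha$. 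Propagating across the connected graph $G_\gamma$ shows that all non-$\gamma$ edges of $K_n$ carry the single color $\alpha$, so only two colors occur in $c$, contradicting $|C|\ge 3$.

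The main obstacle will be executing the propagation argument cleanly: one must carefully track how the no-rainbow constraint forces color uniformity along $G_\gamma$-paths, being careful about coincidences of endpoints in the short cases (paths of length $1$ or $2$). A slicker alternative is to induct on $|C|$: if some color class is disconnected we are done by the partition argument above; otherwise the contradiction-step shows that $c$ in fact uses at most $|C|-1$ colors, and we recurse on this smaller palette until we reach the trivial case $|C|\le 2$.
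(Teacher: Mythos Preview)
The paper does not prove this theorem; it is quoted as a classical result of Gallai, so there is no in-paper proof to compare against. On its own merits, your argument has two genuine gaps.

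First, the propagation along a spanning tree of $G_\gamma$ does not show that every non-$\gamma$ edge at $x$ has color $\alpha$. Walking a $\gamma$-path $y=p_0,\ldots,p_m$ and looking at triangles $xp_ip_{i+1}$, once $c(xp_i)=\gamma$ the triangle has two $\gamma$-edges and gives no constraint on $c(xp_{i+1})$, so the color can jump to any $\beta$. Concretely, take vertices $x,y_1,z,y_2$ with $c(xy_1)=c(y_1y_2)=\alpha$, $c(xz)=c(y_1z)=c(y_2z)=\gamma$, and $c(xy_2)=\beta$: this is a Gallai coloring, $G_\gamma$ is a connected spanning star, yet your walk $y_1\to z\to y_2$ loses all information at $z$ and fails to force $c(xy_2)\in\{\alpha,\gamma\}$. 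Your propagation step never invokes connectivity of the \emph{other} color classes, so the inference cannot be valid as written. Second, for $|C|\ge 4$ your verification of the ``at most two colors between parts'' condition is wrong: the reduced coloring on parts is itself Gallai, and a Gallai coloring can use three or more colors without any rainbow triangle (e.g.\ four parts with inter-part colors $\alpha,\alpha,\alpha,\beta,\beta,\delta$, where every triple of parts repeats a color). Hence three colors between parts need not produce ``three parts realizing these colors pairwise''. Your ``slicker alternative'' inherits both problems. The standard remedy is different from what you wrote: once some $G_\gamma$ is disconnected, contract its components and recurse on the resulting smaller Gallai coloring (which avoids $\gamma$), then pull the coarser partition back; that recursion is what actually secures the two-color condition.
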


\section{Constructions}\label{Constructions}

\newcounter{constr}
\setcounter{constr}{0}

Here we will list some explicit or probabilistic $3$-edge-colorings of $K_n$ which are $\mathcal H$-avoiding for certain families, which we will then refer to in order to prove upper bounds on $h_2(n,\mathcal H)$. We describe the colorings and  bound  the size of a largest $2$-colored set for each of them. The constructions are ordered by increasing order of magnitude of $h_2$. For each constructed coloring $c$ an upper bound on $h_2(c)$ is established. We remark that these upper bounds are  asymptotically tight, but since these facts are not needed for our results, we omit proofs. Some of the constructions are explicit and give exact bounds, while others rely on probabilistic/Ramsey results, and therefore only give asymptotic bounds. Lastly, in each construction where we obtain asymptotic upper bounds, divisibility conditions are ignored, and floors and ceilings are omitted for simplicity of presentation.


\refstepcounter{constr}\label{const1}
\subsection*{Construction \theconstr\   ($\ceil{\sqrt n}$, none of $\rrb, \rry,\bbr, \bby$) }
Let $ n \ge 3$ be an integer, and assume first that $n=m^2$ for some integer $m$. Let the vertex set be $\{v_{ij}:   i, j \in [m]\}$, a set of $m^2$ vertices. Define the coloring $c$ of $E(V)$ as follows:

$$ c(v_{ik}v_{jl}) = \left\{ \begin{array}{cc}
\text{red} & \text{ if } i=j, k \neq l, \\
\text{blue} &  \text{ if } i\neq j, k=l, \\
\text{yellow} &  \text{ if }  i\neq j, k\neq l.
\end{array}\right.$$

The red graph is the disjoint union of $m$ cliques of size $m$, where $m \ge 2$. The same holds for the blue graph. It is not difficult to see that this coloring contains none of the stated patterns.\\

\noindent 
\textbf{Claim:} $h_2(c) \le \ceil{\sqrt n}$.
\begin{proof} Let  $V_i = \{v_{ik} \ |\ k \in [m] \}$, $U_i = \{v_{ji}\ |\ j \in [m] \}$, $i\in [m]$. 
Any blue/yellow clique can use at most one vertex of any given $V_i$, any red/yellow clique can use at most one vertex of any given $U_j$, so we have $S_{by}^c, S_{ry}^c \le m$. In fact, it is not hard to find blue/yellow and red/yellow cliques of size $m$, so $S_{by}^c = S_{ry}^c = m$. Assume there is a red/blue clique of size $m+1$. Then it uses two vertices  $x, y \in V_i$ and one vertex $z \in V_j$ for some $i \neq j$ (here we are using $m \ge 2$). But then $z$ can have a blue edge to at most one of $\{x,y\}$, a contradiction. Thus, we have $S_{rb}^c,  S_{ry}^c, S_{by}^c \le m = \sqrt n$. \end{proof}

\noindent
Now let $n \in \mathbb N$ be arbitrary and take the smallest $m \in \mathbb N$, such that $n \le m^2$. Note that $m = \ceil{\sqrt n}$. Take the construction described above with $m^2$ vertices and arbitrarily remove $m^2 - n$ vertices. Then the size of a largest two-colored clique is still at most $m = \ceil{\sqrt n}$.

\refstepcounter{constr}\label{const:sqrt_n_k4}
\subsection*{Construction \theconstr\ ($O(\sqrt n)$, none of $\rby, \rrb$)}

Let $k = \sqrt{n}$. By Bohman's~\cite{B} result $R(4, t) =\Omega(t^{5/2}/\log^2 t)$ and Observation 2, we may consider a blue/yellow edge-coloring of $K_{k}$ (for sufficiently large $n$) with no yellow clique of size $4$ and no blue clique of size larger than $Ck^{2/5}\log^{4/5}k$, for some constant $C$. Call this coloring $c'$. Now, let $V_1, \ldots, V_{k}$ be pairwise vertex disjoint sets of size $\sqrt{n}$ each and consider the $(V_1, \ldots, V_k)$-blowup of $c'$. Inside each $V_i$ color the edges in red/yellow with no yellow clique of size $4$ and no red clique on more than $Ck^{2/5}\log^{4/5} k$ vertices. Call the resulting coloring $c$. \\
Observe that any triangle in this coloring contains edges from either  one $V_i$ (in which case it is colored in red and yellow), from two distinct $V_i$'s (in which case it is a $bbr, bby, yyr$ or $yyy$ triangle), or from three distinct $V_i$'s (then it uses only colors blue and yellow). Thus, there are no $rby, rrb$ triangles.\\ 

\noindent 
\textbf{Claim:}  $h_2(c) = O(\sqrt n).$
\begin{proof}
Note that any red/blue clique must be the blowup of a blue clique in $c'$ with red cliques inside each $V_i$. By construction, we then have $S_{rb}^c \le (C(\sqrt n)^{2/5}\log^{4/5}(\sqrt n))^2 \le \sqrt n$, for sufficiently large $n$. For red/yellow and blue/yellow cliques we have 
$S_{by}^c, S_{ry}^c \le 3\cdot \sqrt n$.
\end{proof}

\refstepcounter{constr}\label{const4}
\subsection*{Construction \theconstr\  ($O(\sqrt{n\log n})$, none of $ \rrr, \bbr, \bby$) } 

By \Cref{thm:K}, for $n$ large enough there is a triangle-free graph $G$ on $n$ vertices with $\alpha(G) \le 9 \sqrt{n \log n}$. By \Cref{thm:PT}, we have that $\chi(G) \le 4 \sqrt{n/\log n}$. Consider a partition of $V(G)$ into $\chi(G)$ independent sets $V_1,\ldots, V_{\chi(G)}$ (each of size at most $\alpha(G)$). 
Now consider the following $3$-edge-coloring $c$ of $K_n$. Fix a copy of $G$ in $K_n$ and color it red, then color all edges with both endpoints in the same $V_i$ blue and all remaining edges (between two different $V_i$'s that are not in $G$) yellow.\\ 
Observe that the blue graph is a disjoint union of cliques and the red graph is triangle-free, so there are no $rrr, bbr$ or $bby$ triangles.\\

\noindent
\textbf{Claim:} $h_2(c) = O(\sqrt{n \log n})$. 
\begin{proof}
A blue/yellow clique in this coloring corresponds to an independent set in $G$, i.e., we have 
$S_{by}^c = \alpha(G) \le 9 \sqrt{n \log n}$. Since a red/yellow clique contains at most one vertex from any $V_i$, we have $S_{ry}^c \le \chi(G)  \le 4 \sqrt{n/\log n} $. A red/blue clique contains vertices from at most two  $V_i$'s, since otherwise there is a red triangle in $G$, i.e., we have $S_{rb}^c \le 2 \alpha(G) \le 18 \sqrt{n \log n}$.
\end{proof}

\refstepcounter{constr}\label{const5}
\subsection*{Construction \theconstr\ ($O(\sqrt{n\log n})$, none of $\rby, \rrb, \rry $) } 

Take $k=\sqrt{n \log n}$ and consider a blue/yellow edge-coloring $c'$ of $K_k$  without a monochromatic clique of size more than $2\log k$, which exists by the bound on the Ramsey number $R(t,t) \ge 2^{t/2}$. Let $V_1, \ldots, V_k$ be pairwise vertex disjoint sets each of size $n/k = \sqrt{n / \log n} $, and consider the $(V_1, \ldots, V_k)$-blowup of $c'$. Color every edge with both endpoints in $V_i$ red for each $i \in [k]$. Let us denote by $c$ the resulting coloring.

Observe that each triangle in $c$ is either monochromatic red with all vertices in $V_i$, $i\in [k]$,  a $bbr$ or $yyr$ triangle (one edge in a red clique and two edges of the same color to another clique), or 
one of $bbb$, $bby$, $yyy$, $yyb$ (vertices from three different red cliques).\\

\noindent
\textbf{Claim:} $h_2(c) = O(\sqrt{ n\log n})$.
\begin{proof}
Any blue/yellow clique contains at most one vertex from each $V_i$, i.e., we have $S_{by}^c = k =  \sqrt{n \log n} $. Any red/blue (red/yellow) clique in $c$ corresponds to a blue (yellow) clique in $c'$, so we have 
$S_{rb}^c = S_{ry}^c \le (n/k) \cdot 2\log k = \sqrt{n/\log n} \cdot 2\log(\sqrt{n \log n})  \le  2\sqrt{n\log n}. $ \end{proof}

\refstepcounter{constr}\label{const6}
\subsection*{Construction \theconstr\ ($O(\sqrt{n\log n})$, none of $\rby, \rry, \bby )$}
Let $k = \sqrt{n \log n}$. Consider a red/blue edge-coloring $c'$ of $K_k$ without a monochromatic clique of size larger than $2\log k$. Such a coloring exists by the bound on the Ramsey number $R(t,t) \ge 2^{t/2}$. 
Take $m = n/k$ pairwise vertex-disjoint copies of $K_k$ each colored according to $c'$ with vertex sets $V_1, \ldots, V_m$. Color every edge between $V_i$ and $V_j$ yellow for all distinct $i, j \in [m]$. Call the resulting coloring $c$. \\
Observe that each triangle in $c$ is either colored in red and blue (all vertices are in the same $V_i$), or contains at least two yellow edges (vertices in at least two different $V_i$'s).\\

\noindent
\textbf{Claim:} $h_2(c) = O(\sqrt{n \log n} )$.
\begin{proof}
Any red/blue clique contains vertices from at most one $V_i$, so $S_{rb}^c = k = \sqrt{n \log n}$. Any red/yellow (blue/yellow) clique contains at most $2\log k$ vertices from each $V_i$, so we have $S_{ry}^c,S_{by}^c \le (n/k) \cdot 2\log k =\sqrt{n/\log n}\cdot\log(\sqrt{n \log n}) \le 2\sqrt{n \log n} . $
\end{proof}

\refstepcounter{constr}\label{const:r-b-only}
\subsection*{Construction \theconstr\ ($O(\sqrt n \log^{3/2} n)$, none of $\rrr,  \bbb, \bbr, \rrb$)}

From the lower bound on $R(3, t)$, consider a triangle-free graph $H$ with vertex set $[n]$ and independence number at most $9\sqrt{n \log n}$. 
Define $c$, a coloring of the edges of a complete graph on vertex set $[n]$ as follows:  the edges not in $H$ are colored yellow, and each  edge of $H$ is colored red with probability $1/2$ or blue with probability $1/2$.  Note that in this coloring each triangle has a yellow edge, and therefore contains none of the stated patterns. \\

\noindent 
\textbf{Claim:}  With positive probability $h_2(c) = O(\sqrt{n} \log^{3/2} n).$
\begin{proof}
Letting $q(n) = 80 \sqrt{n} \log^{3/2} n$, we shall show that any set of $q(n)$ vertices induces edges of all three colors. 
Let $X$ be a fixed set of $q(n)$ vertices. \\
Now we use Tur\'an's theorem \cite{T}, which tells us that a graph $G$ on $n$ vertices with $\alpha(G) < r$ has at least $\frac{1}{r}\binom{n}{2}$ edges.   
Since $\alpha(G[X]) < 10\sqrt{n \log n}$, the number of edges induced by $X$ in $H$ is at least 
$$ e_X = \frac{1}{10 \sqrt{n \log n}} \binom{q(n)}{2} \ge \frac{q^2(n)}{4\cdot 10 \sqrt{n \log n}}.$$
%
Then the probability that $X$ induces only yellow and blue edges in $c$ or that $X$ induces only yellow and red edges in $c$ is at  most 
$$p_X \leq 2\cdot 2^{-e_X} \leq 2\cdot 2^{-q^{2}(n)/40\sqrt{n \log n}}.$$
Using the union bound over all $q(n)$-element subsets of $[n]$, we have that the probability  that  $c$ contains a $q(n)$-vertex set inducing edges of only two colors is at most
\[
 \binom{n}{q(n)} p_X \leq n^{q(n)} 2^{1 - {q^2(n)}/{40\sqrt{n\log n}}} = 2^{\left(q(n)\log n + 1 - {q^2(n)}/{40\sqrt{n\log n}}\right)} < 1,
 \]
using the definition of $q(n)$.
Thus there is a desired coloring with positive probability (we remark that we did not attempt to optimize the constants here).
\end{proof}

\refstepcounter{constr}\label{const8}
\subsection*{Construction \theconstr\ ($O(n^{2/3})$, none of $\rrb, \rry, \bby, \rby$)}

Consider $n^{1/3}$ pairwise vertex-disjoint red cliques of size $n^{1/3}$ each and color all edges in-between blue. This is a red/blue edge-coloring of $K_{n^{2/3}}$. Consider $n^{1/3}$ pairwise vertex-disjoint copies of $K_{n^{2/3}}$ each colored as above, and color all remaining edges yellow. Call the resulting coloring $c$.\\
Observe that the red graph is a disjoint union of cliques, so any triangle contains either 3, 1 or 0 red edges, if it has its vertices in 1, 2 or 3 different red cliques respectively, so  there is no triangle containing exactly two red edges. Assume there are vertices $u, v, w$, such that $c(uv)=b$ and $c(vw)=y$. Then $u$ and $v$ are in the same red/blue clique and $w$ is in a different red/blue clique, so $c(uw)=y$, and there are no $rby$, $bby$ triangles. \\

\noindent
\textbf{Claim:} $h_2(c) = O(n^{2/3})$.
\begin{proof} It is not difficult to see that $S_{rb}^c = O(n^{2/3})$. Note that any red/yellow clique contains vertices from at most one red clique inside each copy of $K_{n^{2/3}}$. Since each red clique has size $n^{1/3}$ and there are $n^{1/3}$ copies of $K_{n^{2/3}}$, we obtain $S_{ry}^c = O(n^{2/3})$ (see Observation 1). Finally, any blue/yellow clique contains at most one vertex from each red clique inside every copy of $K_{n^{2/3}}$. Since there are $n^{1/3}$ red cliques inside each $K_{n^{2/3}}$, we have $S_{by}^c = O(n^{2/3})$.
\end{proof}

\refstepcounter{constr}\label{const7}
\subsection*{Construction \theconstr\ $(O(n^{2/3}\sqrt{\log n})$, none of $\rby, \rrr, \yyb, \yyr $)} 
Let $k = n^{2/3}\sqrt{\log n}$. From the lower bound on $R(3, t)$/Observation 2 consider a red/blue edge-coloring $c'$ of $K_k$ with no red $K_3$ and no blue $K_t$ for any $t > 9\sqrt{k\log k}$. Let $V_1, \ldots, V_k$ be pairwise disjoint sets each of size $n/k$, and consider the $(V_1, \ldots, V_k)$-blowup of $c'$. Color all edges with both endpoints in $V_i$ yellow for each $i \in [k]$ and call the resulting coloring $c$.\\
Observe that each triangle is either $yyy$ (all vertices in the same $V_i$), $yrr$ or $ybb$ (one edge within a $V_i$ and two edges to a different $V_i$) or one of $rrb, bbr, bbb$ (all vertices in different $V_i$'s).\\

\noindent
\textbf{Claim:} $h_2(c) = O(n^{2/3}\sqrt{\log n})$.
\begin{proof}
Since any red/blue clique can contain at most one vertex from each $V_i$, we have 
$S_{rb}^c = k$. Further, as $c'$ contains no red triangle, we obtain $S_{ry}^c \le 2\cdot(n / k)$. Lastly, as $c'$ contains no blue $K_t$ for any $t > 9\sqrt{k\log k}$, we have that $S_{by} \le 9\sqrt{k\log k}\cdot (n/k)$. Thus, by our choice of $k$ it follows that $\max\{S_{rb}^c, S_{ry}^c, S_{by}^c\} \le  Cn^{2/3} \sqrt{\log n}$, for some constant $C$.
\end{proof}

\refstepcounter{constr}\label{const9}
\subsection*{Construction \theconstr\ ($O(n^{2/3}\sqrt{\log n})$, none of $\rby, \rrr, \rrb, \yyb $)} 
Let $k = n^{1/3}/\sqrt{\log n}$ and consider the trivial edge-coloring $c'$ of $K_k$ where every edge is colored blue. Let $V_1, \ldots, V_k$ be pairwise vertex disjoint sets of size $n/k$ each and consider the $(V_1, \ldots, V_k)$-blowup of $c'$. From the lower bound on $R(3, t)$, there is a red/yellow edge-coloring $c''$ of $K_{n/k}$ with no red $K_3$ and no yellow $K_t$ for any $t > 9\sqrt{(n/k)\log(n/k)}$ and sufficiently large $n$. Color the edges inside $V_i$ according to $c''$ for all $i \in [k]$, and call the resulting coloring $c$.\\
Observe that each triangle is either one of $yyy$, $yyr$ or $rry$ (all vertices in the same $V_i$),
one of $bbr, bby$ (vertices in two different $V_i$'s) or $bbb$ (vertices in three different $V_i$'s).  \\

\noindent 
\textbf{Claim:}  $h_2(c) = O(n^{2/3}\sqrt{\log n}).$
\begin{proof}
First, note that any red/yellow clique can contain vertices from at most one of the $V_i$'s. It easily follows that $S_{ry}^c = n/k$. Furthermore, using the bounds on the sizes of red and yellow cliques inside any given $V_i$, we obtain $ S_{by}^c \le k\cdot 9\sqrt{(n/k)\log(n/k)}$,  and $S_{rb}^c \le 2k$. Thus, our choice of $k$ implies that $\max\{S_{rb}^c, S_{ry}^c, S_{by}^c\} \le C n^{2/3} \sqrt{\log n}$, for some constant $C$.
\end{proof}

\refstepcounter{constr}\label{const:2/3_no_k3}
\subsection*{Construction \theconstr\ ($O(n^{2/3}\sqrt{\log n})$, none of $\rrr, \rrb, \bbr, \bby$)} 

Let $k = n^{2/3}\sqrt{\log n}$ and consider $m = n/k$ pairwise vertex disjoint copies of $K_k$ with vertex sets $V_1, \ldots, V_{m}$ where $V_i = \{v_{i1}, \ldots, v_{ik}\}$ for each $i = 1, \ldots, m$. Let $c'$ be an edge-coloring of $K_k$ in red/yellow with no red $K_3$ and no yellow clique larger than $9\sqrt{k\log k}$. Color the cliques induced by $V_i$ according to $c'$ for each $i = 1, \ldots, m$. For $i \neq j$ color the edge $v_{is}v_{jt}$ blue if $s = t$ and yellow otherwise. Call this final coloring $c$.\\
Observe that there is no $rrr$ and no $rrb$ triangle since $c'$ contains no red $K_3$, and any two incident red edges are completely contained in some $V_i$, which contains no blue edges. Similarly, any two incident blue edges are contained in a blue clique, so there are no $bbr$ and no $bby$ triangles.  \\

\noindent 
\textbf{Claim:}  $h_2(c) = O(n^{2/3}\sqrt{\log n}).$
\begin{proof}
Consider the sets $U_t = \{v_{it}: i = 1, \ldots, m\}$ for $t = 1, \ldots, k$. Observe that any red/yellow clique can contain at most one vertex from each $U_t$. Therefore, we obtain $S_{ry}^c \le k$. From our bound on the sizes of largest yellow cliques in each $V_i$, it follows that 
 $S_{by}^c \le (n/k)\cdot 9\sqrt{k\log{k}}$. Lastly, since there is no clique using both red and blue edges, there is no red triangle and each blue clique has size 
   $n/k$, so we have that $S_{rb}^c = n/k$. Thus, our choice of $k$ implies  $\max\{S_{rb}^c, S_{ry}^c, S_{by}^c\} \le C n^{2/3} \sqrt{\log n}$, for a constant $C$.
\end{proof}

\refstepcounter{constr}\label{const:3/4_no_rb}
\subsection*{Construction \theconstr\ ($O(n^{3/4}\sqrt{\log n})$, none of $\rby, \rrr, \bbb, \rrb$)} 

Let $k = \sqrt{n}$. By the lower bound on $R(3, t)$, take a blue/yellow edge-coloring $c'$ of $K_k$ with no blue $K_3$ and no yellow clique of size greater than $9\sqrt{k\log k}$. Let $V_1, \ldots, V_k$ be pairwise vertex disjoint sets each of size $n/k$ and consider the $(V_1, \ldots, V_k)$-blowup of $c'$. Color each $V_i$ in red/yellow with no red $K_3$ and no yellow clique of size greater than $9\sqrt{(n/k)\log(n/k)}$. Let the resulting coloring be $c$.\\
 Note that $c$ contains neither a monochromatic red triangle nor a monochromatic blue triangles, since $c'$ contains no blue $K_3$ and the coloring inside each $V_i$ contains no red $K_3$. Since the $V_i$'s contain no blue edges, there is no $rrb$ triangle, and it is easy to see that there is no rainbow triangle in $c$.\\

\noindent 
\textbf{Claim:}  $h_2(c) = O(n^{3/4}\sqrt{\log n}).$
\begin{proof}
As there is no blue $K_3$ in $c'$, any red/blue clique contains vertices from at most two of the $V_i$'s, and since there is no red $K_3$ inside the $V_i's$, we see that $S_{rb}^c \leq 4$. Moreover, since yellow cliques in each $V_i$ have size at most $9\sqrt{(n/k)\log(n/k)}$ and the edges between any two distinct $V_i$ and $V_j$ are colored in blue/yellow, we obtain $S_{by}^c \leq k\cdot 9 \sqrt{(n/k) \log(n/k)}$. Finally, since any yellow clique in $c'$ has size at most $9\sqrt{k\log k}$, it follows that $S_{ry}^c \le 9 \sqrt{ k \log k} \cdot (n/k)$. By our choice of $k$ we have $\max\{S_{rb}^c, S_{by}^c, S_{ry}^c\} \le Cn^{3/4}\sqrt{\log n}$, for some constant $C$.
\end{proof}

\refstepcounter{constr}\label{const:2n/5_2}
\subsection*{Construction \theconstr\ ($2\floor{\frac{n}5} + \epsilon$, none of $\rrr,\bbb, \rry, \bby$)} 
Consider the red/blue coloring $c'$ of $K_5$ with no monochromatic triangle. Let $n \ge 5$ be an integer, take $\ceil{\frac n5}$ pairwise vertex disjoint copies of $K_5$ colored according to $c'$,  and delete some vertices from one of these copies to make sure that the total number of vertices is $n$.
Finally, color all remaining edges between these copies yellow. Denote by $c$ the resulting coloring.\\ 
Observe that there are no monochromatic red or blue triangles and that each triangle contains either no yellow edges (if it is contained in a red/blue $K_5$) or at least two yellow edges (if it contains vertices of at least two distinct red/blue $K_5$'s.)   \\

\noindent 
\textbf{Claim:}  $h_2(c) \le 2 \floor{\frac{n}5}  + \epsilon,$ where  $\epsilon = 0$ if $n \equiv 0 \pmod 5$, $\epsilon = 1$ if $n \equiv 1$, and $2$ otherwise.

\begin{proof}
The largest red/blue clique has size $5$, and any red/yellow or blue/yellow clique contains at most two vertices from each of the 
$\floor{n/5}$ copies of $K_5$  and at most $\epsilon$ vertices from the remaining vertices, so we obtain
$S_{by}^c , S_{ry}^c\le 2 \floor{\frac{n}{5}} + \epsilon$.
\end{proof}

\refstepcounter{constr}\label{const:2n/5}
\subsection*{Construction \theconstr\ ($\leq 2\ceil{\frac{n}5}$, none of  $\rrr,\bbb, \yyr, \yyb$)} 
Consider the red/blue coloring $c'$ of $K_5$ with no monochromatic triangle. Let $n \ge 5$ be an integer and let $V_1, \ldots, V_5$ be pairwise disjoint sets of sizes $\ceil{\frac n5}$ or $\floor{\frac n5}$ such that $\sum_{i = 1}^5 |V_i| = n$. Consider the $(V_1, \ldots, V_5)$-blowup of $c'$ and color every edge within $V_i$ yellow for $i = 1, \ldots, 5$. Denote by $c$ the resulting coloring.\\
Observe that there is no monochromatic red or blue triangle in $c$ and that the yellow graph forms a disjoint union of cliques, so there is no triangle with exactly two yellow edges. \\

\noindent 
\textbf{Claim:}  $h_2(c) \le 2\ceil{\frac{n}5}.$
\begin{proof}
The largest red/blue clique has size $5$, and any red/yellow or blue/yellow clique contains at most two of the parts $V_i$'s, so we obtain $S_{by}^c , S_{ry}^c\le 2 \ceil{\frac{n}{5}}$.
\end{proof}

\refstepcounter{constr}\label{const:3n/7}
\subsection*{Construction \theconstr\ ($\le {\ceil{\frac {3n}7}} + 1$, none of $\rrr, \bbb, \bbr, \yyb $)} 

Consider $K_7$ with vertex set $\{v_0, \ldots, v_6\}$. Define a red/blue/yellow edge-coloring $c'$ of $K_7$ as follows. For distinct $i, j \in \{0, \ldots, 6\}$ set\\
\begin{minipage}[t]{0.65\linewidth}
	\vspace{0pt}
$$c'(v_iv_j) = \begin{cases}  b, & \text{ if } i-j=\pm 1 \pmod 7 , \\
y, & \text{ if } i-j =\pm 2\pmod 7, \\
r ,& \text{ if } i-j =\pm 3\pmod 7. \\
\end{cases}\\[0.2cm]$$ 	
\end{minipage}\quad\begin{minipage}[t]{0.2\linewidth}
\vspace{-12pt}
\kseven
\end{minipage}

Note that $c'$ contains no monochromatic blue or red triangles, since the blue and red graph form $7$-cycles. Since vertices at distance $2$ along the cycle are colored yellow, $c'$ contains no $bbr$ triangles. Finally, consider triangles containing two incident yellow edges. By symmetry, we may assume that the vertices of this triangle are $0, 2,$ and $5$. Then since $5-2 = 3$, the third edge must be red. Thus, there are no $yyb$ triangles in $c'$.

Now, let $n \ge 7$ be an integer and let $V_0, \ldots, V_6$ be pairwise disjoint vertex sets $V_0,\ldots, V_6$ of sizes $x=\floor{\frac {n}7}$ or $w=\ceil{\frac {n}7}$ such that $\sum_{i=0}^6 |V_i| = n$. The parts of sizes $x$ and $w$ are arranged cyclically according to the following orders depending on when $n$ is $0, 1, 2, 3, 4, 5, 6$ modulo $7$, respectively: $xxxxxxx$, $wxxxxxx$, $wxwxxxx$, $wwxxwxx$, $wwxxwwx$,  $wwwwwxx$, $wwwwwwx$. Consider the $(V_0, \ldots, V_6)$-blowup of $c'$ and color every edge within $V_i$ yellow for $i = 0, \ldots, 6$. Call the resulting coloring $c$, and note that $c$ still doesn't contain any of the stated patterns.\\

\textbf{Claim:} $h_2(c) \le \ceil{\frac{3n}7} + \epsilon_1(n)$, where $\epsilon_1(n) = 1$ if $n \equiv 2 \pmod{7}$ and $\epsilon_1(n) = 0$, otherwise.
\begin{proof}
If some clique contains vertices from four different $V_i$'s, then it induces all three colors. 
Thus,  any  two-colored clique contains vertices from at most three different $V_i$'s. To prove the upper bound we may assume $n$ is not divisible by $7$. Let us call a partition set $V_i$ \emph{big} if it has size $w$, and otherwise \emph{small}. Write $n = 7x + r$ for non-negative integers $x, r$ with $1 \le r \le 6$. Given our distribution of sizes, it is not difficult to check that any $2$-colored clique contains vertices from at most one big set if $r=1$, at most two big sets if $2 \le r \le 4$, and at most three big sets if $r = 5, 6$. Thus, if $r = 1$ the largest $2$-colored clique has size $2x + w = 3x + 1 = \ceil{3n/7}$. If $r = 3, 4$, then the largest $2$-colored clique has size $2w + x = 3x + 2 = \ceil{3n/7}$. If $r = 5, 6$, the largest $2$-colored clique has size $3w = 3x + 3 = \ceil{3n/7}$. Lastly, suppose $r = 2$. The largest $2$-colored clique has size $2w + x = 3x + 2$. On the other hand, $\ceil{3n/7} = \ceil{3x + 6/7} = 3x + 1$. Hence, the largest $2$-colored clique has size $\ceil{3n/7} + 1$. This completes the proof of the claim. 
\end{proof}

\refstepcounter{constr}\label{const2}
\subsection*{Construction \theconstr\  ($\ceil{\frac n2}$, none of $ \rrr, \yyy, \rrb, \rry, \bbr, \bby, \yyr$)}

Consider the following 3-edge-coloring $c$ of $K_n$: 
take disjoint blue cliques $V_1$, $V_2$ of sizes $\floor{\frac n2}$ and $\ceil{\frac n2}$, put a maximum red matching in between and color all other edges yellow.\\
Observe that each triangle is either monochromatic blue (if it is contained in $V_1$ or $V_2$) or one of $rby$, $yyb$ (if it has w.l.o.g. one vertex in $V_1$ and two in $V_2$).\\

\noindent
\textbf{Claim:} $h_2(c) \le \ceil{\frac n2}$.
\begin{proof} The vertex set of any  blue/red clique of size at least $3$  is contained in either $V_1$ or $V_2$, thus $S_{rb}^c \le \ceil{\frac n2}$. Any red/yellow clique contains at most one vertex from each of $V_i$'s, $i=1,2$, so we have $S_{ry}^c = 2$. Consider a largest blue/yellow clique $X$. Then it has $x_1$ vertices in $V_1$, each one has a red neighbor in $V_2$, so $X$ can contain at most $|V_2| - x_1$ vertices in $V_2$, i.e., we have $S_{by}^c \le |V_2| = \ceil{\frac n2}$. \end{proof}

\refstepcounter{constr}\label{const3}
\subsection*{Construction \theconstr\  ($\ceil{\frac n2} +1, $ none of $\yyy, \rby, \rrb, \rry, \bbr,\bby $)}

Consider the following $3$-edge-coloring $c$ of $K_n$ for $n \ge 3$: 
Take a  red clique $V_1$ of size $\ceil{\frac n2}$ and vertex-disjoint from it a  blue clique $V_2$ of size $\floor{\frac n2}$  and color all edges between $V_1$ and $V_2$ yellow. \\
Observe that each triangle is either monochromatic red or blue (if it is contained in $V_1$ or $V_2$) or one of $yyr, yyb$ (if it has vertices in both $V_1$ and $V_2$).\\

\noindent
\textbf{Claim:} $h_2(c) \le \ceil{\frac n2} +1$.
\begin{proof}
Any red/blue clique contains only vertices from either $V_1$ or $V_2$.
Any red/yellow clique contains at most one vertex from $V_2$ and any blue/yellow clique contains at most one vertex from $V_1$, so we have $\max\{S_{rb}^c, S_{ry}^c, S_{by}^c\} \le |V_1| +1$. \end{proof}

\section{Forbidding $\h$ with $|\h| = 1$} \refstepcounter{sec}

We start with forbidding only one pattern, i.e., up to swapping colors we only need to consider 3 families $\mathcal H \in \{ \{\rby\}, \{\rrr\}, \{\rrb \} \}$.
According to Fox et al. \cite{FGP}, if $\mathcal H = \{\rby\}$, then $h_2(n,\mathcal H) = \Theta(n^{1/3}\log^2 n)$.

\begin{lemma}\label{lem1.rrr}
Let $\mathcal H = \{\rrr\}$. Then we have 
$h_2(n, \mathcal H) = \Theta(\sqrt{n\log n}) .$
\end{lemma}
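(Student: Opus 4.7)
The plan is to derive both bounds essentially for free from results already collected in \Cref{Preliminaries}, together with one of the constructions in \Cref{Constructions}. The key observation is that a coloring avoids $\rrr$ if and only if the red color class is a triangle-free graph, so both sides reduce to extremal questions about triangle-free graphs.

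For the upper bound $h_2(n, \mathcal{H}) = O(\sqrt{n\log n})$, I would simply invoke Construction \ref{const4}, which produces a $3$-edge-coloring of $K_n$ avoiding all of $\rrr$, $\bbr$, and $\bby$, and in which every two-colored clique has size $O(\sqrt{n\log n})$. Since $\{\rrr\} \subseteq \{\rrr,\bbr,\bby\}$, this coloring is in particular $\{\rrr\}$-avoiding, and the monotonicity $h_2(n,\mathcal{H}) \le h_2(n,\mathcal{H}')$ for $\mathcal{H} \subseteq \mathcal{H}'$ noted in the introduction gives the desired upper bound.

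For the lower bound, let $c$ be an arbitrary $\{\rrr\}$-avoiding $3$-edge-coloring of $K_n$, and let $G$ be the graph on $V(K_n)$ formed by its red edges. By assumption $G$ is triangle-free, so \Cref{cor:AKS} (the independent-set formulation of the Ajtai--Koml\'os--Szemer\'edi bound on $R(3,t)$) provides an independent set $I \subseteq V(G)$ with $|I| = \Omega(\sqrt{n\log n})$. No edge inside $I$ is red, so $I$ spans a clique whose edges use only the colors blue and yellow, yielding the two-colored clique we need.

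There is no real obstacle: both directions collapse to the extremal bound on the independence number of triangle-free graphs, which is precisely the phenomenon Construction \ref{const4} was engineered to exploit (the additional patterns $\bbr$ and $\bby$ forbidden there only help in making the two colored color classes correspond to a proper coloring and a clique-decomposition, controlling the $r/y$ and $r/b$ sides; for the single-pattern family $\{\rrr\}$ we need none of this and just extract the $b/y$ clique directly).
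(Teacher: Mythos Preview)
Your argument is correct and essentially identical to the paper's: both use \Cref{cor:AKS} (equivalently the bound on $R(3,t)$) to extract a large blue/yellow clique from the triangle-free red graph, and both cite Construction~\ref{const4} for the matching upper bound. One cosmetic remark: once you observe that the coloring of Construction~\ref{const4} is in particular $\{\rrr\}$-avoiding, the upper bound on $h_2(n,\{\rrr\})$ follows directly from the definition of $h_2$ as a minimum over $\F(n,\{\rrr\})$; the appeal to the monotonicity inequality is then superfluous (though not incorrect).
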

\begin{proof}
Consider an arbitrary  red/blue/yellow edge-coloring $c$ of $K_n$ that has no red triangle. 
By the upper bound on the Ramsey number $R(3,k)$ (\Cref{thm:AKS})  we see that $c$ contains a blue/yellow  $K_k$ with $k =\Omega (\sqrt{n\log n})$, so $h_{2}(n, \{rrr\}) =\Omega (\sqrt{ n\log n}).$ Moreover, Construction~\ref{const4} shows that $h_2(n, \{rrr\}) = O(\sqrt{n\log n})$. 
\end{proof}

\begin{lemma}\label{lem1.rrb}
Let $\mathcal H =  \{\rrb\}$. Then we have $h_2(n,\mathcal H) = \ceil{\sqrt n}$.
\end{lemma}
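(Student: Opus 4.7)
The plan is to get the upper bound essentially for free from Construction~\ref{const1}: that coloring lies in $\F(n, \{\rrb, \rry, \bbr, \bby\}) \subseteq \F(n, \{\rrb\})$ and already has every two-colored clique of size at most $\ceil{\sqrt n}$, so $h_2(n, \{\rrb\}) \le \ceil{\sqrt n}$ follows immediately. The real task is the matching lower bound.

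For the lower bound, I would fix any $c \in \F(n, \{\rrb\})$, set $m = \ceil{\sqrt n}$, and exploit the following structural consequence of forbidding $\rrb$: no two red neighbors of any vertex can be joined by a blue edge, since that would produce an $\rrb$ triangle. Equivalently, for every vertex $v$, the set $\{v\} \cup N_r(v)$ induces a red/yellow clique. The natural case split is then on whether the red graph has a vertex of high red degree.

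If some vertex $v$ has $d_r(v) \ge m-1$, the associated red/yellow clique $\{v\} \cup N_r(v)$ already has size at least $m$, and we are done. Otherwise every vertex has $d_r(v) \le m-2$, so the red graph $R$ satisfies $\Delta(R) \le m-2$. The standard greedy lower bound $\alpha(G) \ge \lceil n/(\Delta(G)+1) \rceil$ then gives $\alpha(R) \ge \lceil n/(m-1) \rceil$. Since an independent set in $R$ uses no red edges, it is a blue/yellow clique, and this step finishes provided $\lceil n/(m-1) \rceil \ge m$.

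The only point I expect to require any care at all is this last rounding step. Unpacking the inequality, one needs $n/(m-1) > m-1$, i.e.\ $n > (m-1)^2$, which is precisely the defining property of $m = \ceil{\sqrt n}$. With that observation both cases yield a two-colored clique of size at least $\ceil{\sqrt n}$, matching the upper bound from the construction, and there is no substantial obstacle in the argument.
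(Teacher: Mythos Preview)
Your proof is correct and follows essentially the same approach as the paper: the upper bound comes from Construction~\ref{const1}, and the lower bound splits on the maximum red degree, using that $\{v\}\cup N_r(v)$ is a red/yellow clique in the high-degree case and the greedy bound $\alpha(R)\ge \lceil n/(\Delta(R)+1)\rceil$ in the low-degree case. The only cosmetic difference is the exact threshold for the split (you use $\ceil{\sqrt n}-1$ where the paper uses $\floor{\sqrt n}$), but the arithmetic checks out either way.
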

\begin{proof}
For the lower bound,  let $c$ be an $\mathcal H$-avoiding red/blue/yellow edge-coloring of $K_n$. Let $\Delta_r$ be the maximum red degree in $c$. If $\Delta_r \ge \floor{\sqrt n}$, there exists a vertex $v$ which has a red neighborhood $N_r(v)$ of size at least $\floor{\sqrt n}$. Note that this neighborhood does not contain a blue edge, since together with $v$ that would create an $rrb$ triangle. Thus, $N_r(v) \cup \{v\}$ spans a red/yellow clique of size at least $\floor{\sqrt n} + 1 \ge \ceil{\sqrt{n}}$. Otherwise, we have $\Delta_r \le  \floor{\sqrt n} - 1$ and so the graph induced on red edges may be vertex-colored with $\floor{\sqrt n}$ colors. One color class has size at least $\ceil{ n / \floor{\sqrt{n}}} \ge \ceil{\sqrt{n}}$. This forms a blue/yellow clique of size at least $\ceil{\sqrt{n}}$ as required. Finally, the upper bound on $h_2(n, \mathcal{H})$ follows from Construction \ref{const1}.
\end{proof}

\section{Forbidding $\mathcal H$ with $|\mathcal H| = 2$} \refstepcounter{sec}

\setcounter{proposition}{1} 
\begin{proposition}\label{prop:2}
 Any family $\mathcal H$ consisting of two distinct patterns can be obtained by applying a color permutation to all patterns in one of the following families: 
 \begin{itemize}
 	\item $\{\rrb, \rry\}, \{\rrb, \bbr\},  \{\rrb, \bby\}, \{\rry, \bby\}$,
 	\item $\{\rrb, \rby\}$,
 	\item $\{ \rrr, \bby\}, \{ \rrr, \bbr\} , \{\rrr,\rrb\}, \{\rrr, \bbb\}$, 
 	\item $\{\rrr, \rby\}.$
 \end{itemize}
\end{proposition}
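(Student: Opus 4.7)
The plan is to classify all orbits of $S_3$, acting by permutation of the colour set $\{r,b,y\}$, on unordered pairs of distinct triangle patterns. Since there are ten patterns, there are $\binom{10}{2}=45$ pairs to classify, and the proposition claims they fall into exactly ten orbits with the listed families as representatives. I would organise the enumeration by partitioning the patterns into three $S_3$-invariant types: Type I (monochromatic) $\{\rrr,\bbb,\yyy\}$; Type II (non-monochromatic, non-rainbow) $\{\rrb,\rry,\bbr,\bby,\yyr,\yyb\}$; and Type III (rainbow) $\{\rby\}$. Any orbit of pairs refines the unordered pair of types of its elements, so the analysis splits into the five non-empty cases (I,I), (I,II), (I,III), (II,II), (II,III); the case (III,III) is vacuous since only one rainbow pattern exists.

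The easy cases (I,I), (I,III), and (II,III) each yield a single orbit, with representatives $\{\rrr,\bbb\}$, $\{\rrr,\rby\}$, and $\{\rrb,\rby\}$, respectively, because $S_3$ acts transitively on Type I and on Type II and fixes $\rby$. For case (I,II), writing each Type II pattern as an ordered pair (majority, minority) of distinct colours, a pair $\{mmm,(m',s')\}$ splits into three orbits according to whether $m=m'$, $m=s'$, or $m\notin\{m',s'\}$, represented by $\{\rrr,\rrb\}$, $\{\rrr,\bbr\}$, and $\{\rrr,\bby\}$. The substantive case is (II,II), with $\binom{6}{2}=15$ pairs. Here I would classify a pair $\{(m_1,s_1),(m_2,s_2)\}$ by the intersection $\{m_1,s_1\}\cap\{m_2,s_2\}$ and, when this intersection is a single colour, by the roles that colour plays in the two patterns. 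This yields four sub-classes: shared majority ($m_1=m_2$; orbit of $\{\rrb,\rry\}$, size $3$), shared minority ($s_1=s_2$; orbit of $\{\rry,\bby\}$, size $3$), swap (the two colour sets coincide with $m_1\ne m_2$; orbit of $\{\rrb,\bbr\}$, size $3$), and mixed (exactly one coincident colour, majority in one pattern and minority in the other; orbit of $\{\rrb,\bby\}$, size $6$). A brief check confirms these classes are pairwise disjoint, exhaust all $15$ pairs, and each forms a single $S_3$-orbit.

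A final tally across the five cases gives $1+3+1+4+1=10$ orbits, matching the number of families listed, and the orbit sizes sum to $3+18+3+15+6=45=\binom{10}{2}$, confirming completeness. The main obstacle is the bookkeeping in case (II,II), where it is easy to conflate the shared-minority and mixed sub-classes because both can use all three colours; the clean invariant separating them is whether the unique coincident colour plays the same role (minority in both) or opposite roles in the two patterns. Once this invariant is fixed, verifying transitivity within each sub-class is a short computation---for example, the cyclic permutation $r\mapsto b\mapsto y\mapsto r$ carries $\{\rrb,\bby\}$ successively through all six pairs of the mixed orbit.
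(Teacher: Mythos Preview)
Your approach is essentially the same as the paper's: both organise the case analysis by which of the three $S_3$-invariant types (monochromatic, non-monochromatic non-rainbow, rainbow) appear in the pair, and then refine within the (I,II) and (II,II) cases according to how the majority/minority colours interact. Your write-up is somewhat more systematic (you track orbit sizes and verify the total is $\binom{10}{2}=45$, which the paper does not bother to do), but the underlying decomposition and the invariants used are identical.

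One small correction: your final sentence claims that the single $3$-cycle $r\mapsto b\mapsto y\mapsto r$ carries $\{\rrb,\bby\}$ through all six pairs of the mixed orbit. That cannot be right, since a permutation of order $3$ generates orbits of size dividing $3$. In fact the cyclic subgroup only reaches $\{\rrb,\bby\}$, $\{\bby,\yyr\}$, $\{\yyr,\rrb\}$; to hit the other three pairs you need a transposition as well (the stabiliser of $\{\rrb,\bby\}$ is trivial, so the full $S_3$-orbit does have size $6$). This does not affect the correctness of your classification, only the illustrative example.
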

We prove this proposition in the Appendix.

\subsection{$\mathcal H$ contains no rainbow and no monochromatic pattern}

\begin{lemma}\label{lem2.1}
 Let $\h \in \{ \{\rrb, \rry\}, \{\rrb, \bbr\},  \{\rrb, \bby\}, \{\rry, \bby\}\}$. Then we have $h_2(n,\mathcal H) = \ceil{\sqrt n}$.
\end{lemma}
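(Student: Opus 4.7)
The plan is to recognize this lemma as an immediate corollary of Lemma~\ref{lem1.rrb} together with Construction~\ref{const1}, combined with two general principles already recorded in the preliminary discussion: that $h_2(n,\cdot)$ is invariant under color permutations, and that $h_2(n,\mathcal{H}) \le h_2(n, \mathcal{H}')$ whenever $\mathcal{H} \subseteq \mathcal{H}'$. For the upper bound, I would appeal to Construction~\ref{const1}, which produces an explicit edge-coloring of $K_n$ that simultaneously avoids all four of $\rrb, \rry, \bbr, \bby$ and for which the largest two-colored clique has at most $\lceil\sqrt{n}\rceil$ vertices. Since each of the four families $\mathcal{H}$ in the statement is a subset of $\{\rrb, \rry, \bbr, \bby\}$, this construction is automatically $\mathcal{H}$-avoiding, so $h_2(n, \mathcal{H}) \le \lceil\sqrt{n}\rceil$ for every $\mathcal{H}$ in the list.

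For the lower bound, the strategy is to argue that each family in the list contains, possibly after a color permutation, the singleton $\{\rrb\}$. Three of the four families, $\{\rrb, \rry\}$, $\{\rrb, \bbr\}$, and $\{\rrb, \bby\}$, contain $\{\rrb\}$ directly. The remaining family $\{\rry, \bby\}$ becomes $\{\rrb, \yyb\}$ after applying the color permutation swapping $b$ and $y$, and $\{\rrb, \yyb\}$ contains $\{\rrb\}$. Combining monotonicity of $h_2$ in $\mathcal{H}$ with invariance under color permutations then yields
\[
h_2(n, \mathcal{H}) \;\ge\; h_2(n, \{\rrb\}) \;=\; \lceil\sqrt{n}\rceil,
\]
where the final equality is exactly Lemma~\ref{lem1.rrb}.

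I do not foresee any genuine obstacle in carrying out this plan; the argument is essentially a bookkeeping exercise. The only routine verification needed is that Construction~\ref{const1}, designed to avoid $\rrb, \rry, \bbr, \bby$, contains no triangle of a pattern from any of the four listed families, and this is immediate since each listed family is a subset of $\{\rrb, \rry, \bbr, \bby\}$.
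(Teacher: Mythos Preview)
Your proposal is correct and essentially identical to the paper's proof: both use Construction~\ref{const1} for the upper bound and monotonicity together with color-permutation invariance to reduce the lower bound to Lemma~\ref{lem1.rrb}. The only cosmetic difference is that for $\{\rry,\bby\}$ the paper observes $\{rry\}\subseteq\mathcal{H}$ and invokes $h_2(n,\{rry\})=h_2(n,\{rrb\})$ directly, whereas you first permute the whole family to one containing $\{rrb\}$; these are the same argument.
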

\begin{proof}
For the lower bound note that we have  $\{rrb\} \subseteq \mathcal H$ or $\{rry\} \subseteq \mathcal H$ for all of these families. Thus, by $h_2(n, \{rrb\}) = h_2(n, \{rry\}) $  and \Cref{lem1.rrb}, we obtain  $h_2(n, \mathcal H) \ge h_2(n, \{rrb\}) = \ceil{\sqrt n}$. The upper bound follows from Construction \ref{const1}.  \end{proof}

\subsection{$\mathcal H$ contains a rainbow but no monochromatic pattern}

\begin{lemma}\label{lem2.2}
Let $\mathcal H = \{\rrb, \rby\}$. Then we have $h_2(n, \mathcal H) = \Theta(\sqrt{n}).$
\end{lemma}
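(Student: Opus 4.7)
The plan is to sandwich $h_2(n,\mathcal{H})$ between $\ceil{\sqrt{n}}$ and $C\sqrt{n}$ for some constant $C$, using results already at hand. Both directions turn out to be short; there is no real obstacle.

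For the lower bound, I would exploit the monotonicity remark from the introduction: if $\mathcal{H} \subseteq \mathcal{H}'$ then $h_2(n,\mathcal{H}) \le h_2(n,\mathcal{H}')$, because every $\mathcal{H}'$-avoiding coloring is automatically $\mathcal{H}$-avoiding. Since $\{rrb\} \subseteq \{rrb, rby\} = \mathcal{H}$, this yields
\[
h_2(n, \mathcal{H}) \;\ge\; h_2(n, \{rrb\}) \;=\; \ceil{\sqrt{n}},
\]
where the last equality is \Cref{lem1.rrb}. In particular $h_2(n, \mathcal{H}) = \Omega(\sqrt{n})$. Note that we do not need the forbidden pattern $rby$ at all for the lower bound; it only strengthens the hypothesis.

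For the upper bound, I would invoke Construction \ref{const:sqrt_n_k4}, which was built precisely to avoid both $rby$ and $rrb$ and whose claim establishes $h_2(c) = O(\sqrt{n})$. Hence $h_2(n, \mathcal{H}) = O(\sqrt{n})$.

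Combining the two bounds gives $h_2(n, \mathcal{H}) = \Theta(\sqrt{n})$, as desired. The only subtlety is a presentational one: the two sides differ by a constant (the lower bound is in fact $\ceil{\sqrt n}$, while the construction has a multiplicative constant coming from the blow-up of a $K_4$-free blue/yellow coloring via Bohman's bound on $R(4,t)$), so we cannot claim an exact equality, only a $\Theta$-bound. This matches the entry in Table \ref{t2}.
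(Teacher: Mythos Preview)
Your proof is correct and essentially identical to the paper's own argument: the lower bound via monotonicity and \Cref{lem1.rrb}, the upper bound via Construction~\ref{const:sqrt_n_k4}.
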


\begin{proof}
The lower bound follows from \Cref{lem1.rrb}: $h_2(n, \h) \ge h_2(n, \{rrb\}) \ge \ceil{\sqrt n}$.	

The upper bound follows from  Construction \ref{const:sqrt_n_k4}.
\end{proof}

\subsection{$\mathcal H$ contains a monochromatic but no rainbow pattern}

Since our forbidden family contains a monochromatic triangle, by \Cref{lem1.rrr} we have the lower bound  $h_2(n, \mathcal H) \ge h_2(n, \{rrr\}) = \Omega(\sqrt{n\log n})$. 

\begin{lemma}\label{lem2.3}
Let $\mathcal H \in \{\{ \rrr, \bby\}, \{ \rrr, \bbr\} \}$. Then we have $h_2(n, \mathcal H)= \Theta(\sqrt{n\log n})$.
\end{lemma}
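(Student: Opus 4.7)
The plan is to split the statement into the lower and upper bounds and to recycle what has already been set up in the paper. For the lower bound, I would simply observe that for both choices $\mathcal{H} = \{\rrr, \bby\}$ and $\mathcal{H} = \{\rrr, \bbr\}$ we have $\{rrr\} \subseteq \mathcal{H}$. By the monotonicity remark in the introduction (if $\mathcal{H}' \subseteq \mathcal{H}$, then any $\mathcal{H}$-avoiding coloring is $\mathcal{H}'$-avoiding, so $h_2(n, \mathcal{H}') \le h_2(n, \mathcal{H})$) combined with \Cref{lem1.rrr}, we get
\[
h_2(n, \mathcal{H}) \;\ge\; h_2(n, \{rrr\}) \;=\; \Omega(\sqrt{n \log n}).
\]

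For the upper bound I would invoke Construction~\ref{const4}. That construction produces, for $n$ sufficiently large, a $3$-edge-coloring $c$ of $K_n$ which has been verified to avoid all three patterns $\rrr$, $\bbr$, and $\bby$ simultaneously, and which satisfies $h_2(c) = O(\sqrt{n \log n})$. Since
\[
\{rrr, bby\} \;\subseteq\; \{rrr, bbr, bby\} \qquad\text{and}\qquad \{rrr, bbr\} \;\subseteq\; \{rrr, bbr, bby\},
\]
the coloring $c$ is automatically $\mathcal{H}$-avoiding for both of our families, so $h_2(n, \mathcal{H}) \le h_2(c) = O(\sqrt{n \log n})$.

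Putting the two bounds together yields $h_2(n, \mathcal{H}) = \Theta(\sqrt{n \log n})$ for both $\mathcal{H}$, as required. There is essentially no obstacle here, since both the lower and the upper bound are immediate consequences of results stated earlier; the only point worth highlighting is that the single construction from Section~\ref{Constructions} is strong enough to simultaneously handle the two subfamilies of size two considered in this lemma, which is why both cases can be packaged into one statement.
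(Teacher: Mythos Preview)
Your proof is correct and follows essentially the same approach as the paper: the lower bound via $\{rrr\}\subseteq\mathcal H$ and \Cref{lem1.rrr}, and the upper bound via Construction~\ref{const4}. The only difference is that you spell out explicitly why Construction~\ref{const4} handles both families at once, which the paper leaves implicit.
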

\begin{proof}
Since $\{rrr\} \subseteq \mathcal H$,   \Cref{lem1.rrr} implies that $h_2(n, \mathcal H) \ge h_2(n, \{rrr\}) = \Omega(\sqrt{n\log n}) $.
The upper bound follows from Construction \ref{const4}.
\end{proof}

Recall, that for a graph $G$,  $f(G)= \max \{\alpha(G),\omega(G^2)\}$ and $f(n)= \min \{f(G): ~ |G|=n,~ \omega(G)=2\}$. The following lemma shows that determining the value of $f(n)$ is closely linked to determining the value of $h_2(n, \mathcal{H})$, where $\mathcal{H} = \{\rrr, \rrb\}$.

\begin{lemma} \label{lem2.f1}
Let $\h = \{\rrr, \rrb\}$. Then $f(n) \leq h_2(n, \h) \leq 2  f(n)$.
\end{lemma}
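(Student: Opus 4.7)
The approach is to exploit the following structural consequence of forbidding $\rrr$ and $\rrb$: in any $\h$-avoiding coloring $c$ with red graph $R$, the graph $R$ is triangle-free, and any two vertices sharing a red neighbor must be joined by a yellow edge, since they cannot be red-adjacent (by $\rrr$-avoidance) nor blue-adjacent (by $\rrb$-avoidance). Consequently, every clique in $R^2$ induces only red and yellow edges in $c$, while every independent set of $R$ induces only blue and yellow edges. This yields $h_2(c) \ge \max\{\alpha(R), \omega(R^2)\} = f(R)$. Since $R$ is triangle-free, either $R$ is edgeless (in which case $h_2(c) = n \ge f(n)$ trivially) or $\omega(R) = 2$ and $f(R) \ge f(n)$ by definition of $f(n)$; either way, the lower bound $h_2(n, \h) \ge f(n)$ follows.

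For the upper bound, the plan is to pick a graph $G$ on $n$ vertices with $\omega(G) = 2$ achieving $f(G) = f(n)$, and to define a coloring $c$ of $K_n$ by coloring the edges of $G$ red, a non-edge $uv$ yellow if $u$ and $v$ have a common $G$-neighbor (that is, are at distance $2$ in $G$), and the remaining non-edges blue. The coloring is $\h$-avoiding: triangle-freeness of $G$ rules out $\rrr$, and forcing all distance-$2$ pairs to be yellow is precisely what prevents an $\rrb$ triangle. A red/yellow clique is then a clique in $G^2$, so has size at most $\omega(G^2) \le f(n)$, and a blue/yellow clique is an independent set of $G$, so has size at most $\alpha(G) \le f(n)$.

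The main obstacle is bounding the size of a red/blue clique $S$ by $2f(n)$. Since $S$ contains no yellow edge, no two vertices of $S$ lie at distance exactly $2$ in $G$. If $u, v, w \in S$ satisfied $uv, vw \in E(G)$, then $u$ and $w$ would share the neighbor $v$, hence lie at distance $\le 2$ in $G$; being at distance exactly $2$ is ruled out, so they would be $G$-adjacent, producing a triangle in $G$, a contradiction. Therefore $G[S]$ is a matching. Writing $|S| = 2k + m$ with $k$ matching edges and $m$ isolated vertices within $G[S]$, picking one endpoint from each matching edge together with the $m$ isolated vertices yields a set of size $k + m$ that is easily checked to be independent in $G$ (any $G$-edge inside $S$ is a matching edge, and our selection omits one endpoint of each). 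Hence $|S| \le 2\alpha(G) \le 2 f(n)$. Combining the three clique bounds gives $h_2(c) \le 2 f(n)$, and the lemma follows. The heart of the argument is this matching observation, which is precisely what introduces the factor of $2$ loss in the upper bound and is the step I would expect to be the main technical obstacle.
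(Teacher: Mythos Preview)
Your proof is correct and takes essentially the same approach as the paper: the same coloring for the upper bound (red on $E(G)$, yellow on distance-$2$ pairs, blue elsewhere) and the same red-graph analysis for the lower bound. You supply a bit more detail than the paper does---explicitly handling the edgeless red graph in the lower bound and spelling out why the red matching in a red/blue clique yields the factor-of-$2$ bound---but the route is identical.
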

\begin{proof}
For the upper bound consider a triangle-free graph $G$ on $n$ vertices such that $f(G)=f(n)$, i.e., $\alpha(G)\leq f(n)$ and 
$\omega(G^2) \leq f(n)$.  Color the edges of $G$ red, color each edge from $E(G^2)\setminus E(G)$ yellow, and color all remaining edges blue. 
We see that there are no red triangles and any two adjacent red edges form an $rry$ triangle. 
Note that $S_{by} = \alpha(G) \leq f(n)$,  $S_{ry}^c= \omega (G^2) \leq f(n)$ and $S_{rb}^c \leq 2\alpha (G)\leq 2f(n)$. 
Here, the statement on $S_{rb}^c$ holds since in any red/blue clique, the red graph forms a matching.

For the lower bound, consider an arbitrary $\h$-avoiding coloring $c$ of $K_n$. Let $G$ be the red graph. Then $S_{by} = \alpha(G)$.
Since there is no $rrb$ triangle, each triangle containing two red edges is an $rry$ triangle, so $S_{ry}^c \geq  \omega(G^2)$.
Thus $h_2(c) \geq \max\{ \alpha(G),  \omega(G^2)\} \geq f(n)$.   
\end{proof}

\begin{lemma}\label{lem2.4}
Let $\mathcal H =\{\rrr, \rrb\}$. Then we have $$h_2(n, \h) = \Omega(\sqrt{n\log n}) \text{ and } 
h_2(n, \h) = O(\sqrt{n} \log^{3/2} n).$$
\end{lemma}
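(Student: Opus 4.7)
The lower bound is immediate from the containment $\{\rrr\} \subseteq \h$: every $\h$-avoiding coloring in particular avoids $\rrr$, so $h_2(n, \h) \ge h_2(n, \{\rrr\}) = \Omega(\sqrt{n\log n})$ by \Cref{lem1.rrr}.

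For the upper bound, the plan is to invoke Construction \ref{const:r-b-only}. Starting from a triangle-free graph $H$ on $n$ vertices with $\alpha(H) \leq 9\sqrt{n\log n}$, whose existence is guaranteed by \Cref{thm:K}, that construction colors every non-edge of $H$ yellow and colors each edge of $H$ independently red or blue with probability $1/2$. Since any triangle in $K_n$ must use at least one non-edge of $H$ (otherwise $H$ would contain a triangle), every triangle in the resulting coloring $c$ is incident to a yellow edge. In particular, $c$ avoids every pattern whose edges use only red and blue, including $\rrr$ and $\rrb$, so $c \in \F(n, \h)$. The union-bound calculation already carried out in Construction \ref{const:r-b-only} then shows that with positive probability $h_2(c) = O(\sqrt{n}\log^{3/2} n)$, and the stated upper bound on $h_2(n,\h)$ follows.

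Because both directions are direct corollaries of earlier results, there is no genuine obstacle in the proof itself. The interesting point is the residual $\sqrt{\log n}$ gap between the two bounds, which by \Cref{lem2.f1} is equivalent to the gap in current estimates on the extremal quantity $f(n) = \min\{\max(\alpha(G), \omega(G^2)) : |V(G)|=n,\ \omega(G)=2\}$. Any improvement on either side of $f(n) \le h_2(n, \h) \le 2f(n)$ would require a correspondingly sharper extremal bound for triangle-free graphs and their squares; this graph-theoretic problem is the real difficulty hiding behind the statement.
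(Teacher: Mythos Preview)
Your proof is correct and follows exactly the paper's approach: the lower bound via $\{\rrr\}\subseteq\h$ and \Cref{lem1.rrr}, the upper bound via Construction~\ref{const:r-b-only}. One small slip in your closing commentary: the gap between $\sqrt{n\log n}$ and $\sqrt{n}\log^{3/2}n$ is a factor of $\log n$, not $\sqrt{\log n}$.
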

\begin{proof} 
The lower bound follows from the lower bound  on $h_2(n, \{rrr\})$,  \Cref{lem1.rrr}.
The upper bound follows from  Construction \ref{const:r-b-only}.
\end{proof}

\begin{lemma}\label{lem2.5}
Let $\mathcal H =\{\rrr, \bbb\}$. Then we have $h_2(n, \h) = \Theta(\sqrt{n\log n} )$. 
\end{lemma}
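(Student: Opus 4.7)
The lower bound is immediate: since $\{\rrr\}\subseteq\mathcal H$, monotonicity of $h_2$ together with \Cref{lem1.rrr} gives $h_2(n,\mathcal H)\ge h_2(n,\{\rrr\})=\Omega(\sqrt{n\log n})$.

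For the upper bound, the plan is to produce a $3$-edge-coloring $c$ of $K_n$ avoiding $\rrr$ and $\bbb$ with $h_2(c)=O(\sqrt{n\log n})$. The key structural observation is that any such $c$ has both $G_r$ and $G_b$ triangle-free, so $R(3,3)=6$ forces $S_{rb}^c\le 5$ automatically, reducing the task to controlling $\alpha(G_r)$ and $\alpha(G_b)$, which equal $S_{by}^c$ and $S_{ry}^c$ respectively. Let $G$ be a pseudo-random triangle-free graph on $[n]$ produced by the triangle-free random process, so that $\alpha(G)=O(\sqrt{n\log n})$, the edge density is $d=\Theta(\sqrt{(\log n)/n})$, and every vertex subset $S\subseteq[n]$ with $|S|\ge \sqrt{n\log n}$ contains $\Omega(d|S|^2)$ edges of $G$. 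Independently color each edge of $G$ red or blue with probability $1/2$ each, and color every non-edge of $G$ yellow. Then $G_r,G_b\subseteq G$ are triangle-free, so $c$ avoids $\rrr$ and $\bbb$; moreover $G_y=\overline{G}$, which gives $S_{rb}^c\le\omega(G)\le 2$. A standard union-bound argument over the $\binom{n}{q}\le 2^{q\log n}$ alleged $q$-sets with $q=C\sqrt{n\log n}$, using the quasi-random edge count to bound each per-set monochromaticity probability by $2^{-|E_G(S)|}\le 2^{-\Omega(C^2\sqrt{n}(\log n)^{3/2})}$, then shows that with positive probability $\alpha(G_r),\alpha(G_b)\le q$ provided $C$ is a sufficiently large absolute constant.

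The main obstacle is the quasi-random edge-count lower bound $|E_G(S)|=\Omega(d|S|^2)$. A Tur\'an-based estimate using only $\alpha(G)\le c\sqrt{n\log n}$ gives the weaker $|E_G(S)|\ge|S|^2/(2c\sqrt{n\log n})$, which is smaller by a factor of $\sqrt{\log n}$, and the same random-coloring argument would then yield only the weaker upper bound $O(\sqrt{n}(\log n)^{3/2})$, matching \Cref{lem2.4} but falling short of the tight $\Theta(\sqrt{n\log n})$ claimed here. It is exactly the quasi-randomness of the triangle-free process, due to Bohman, that supplies the needed edge density at the critical scale $|S|=\Theta(\sqrt{n\log n})$ and closes the gap to the lower bound.
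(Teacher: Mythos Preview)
Your lower bound matches the paper's. For the upper bound the paper takes a different route: it cites the packing result of Guo and Warnke, who show that $K_n$ contains two \emph{edge-disjoint} triangle-free subgraphs $G$ and $G'$, each with independence number $O(\sqrt{n\log n})$. Coloring $E(G)$ red, $E(G')$ blue, and the rest yellow immediately gives $S_{by}^c=\alpha(G)$, $S_{ry}^c=\alpha(G')$, and $S_{rb}^c\le 5$. The paper also includes, as an aside, the weaker $O(\sqrt n\log n)$ bound obtained by randomly packing two copies of a Kim graph and discarding the overlap.

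Your approach---randomly $2$-coloring the edges of a single pseudo-random triangle-free graph---is essentially the paper's Construction~\ref{const:r-b-only} sharpened by one extra ingredient: the uniform edge-count lower bound $e_G(S)=\Omega(d|S|^2)$ for all $|S|\ge\sqrt{n\log n}$. This is exactly the step that closes the gap between the Tur\'an-based $O(\sqrt n\log^{3/2} n)$ and the tight $O(\sqrt{n\log n})$, as you correctly identify. The caveat is that this quasi-randomness property is \emph{not} a consequence of the independence-number bound alone (Kim or Bohman's original paper); it requires the finer dynamic-concentration analysis of the triangle-free process due to Bohman--Keevash or Fiz~Pontiveros--Griffiths--Morris, so your attribution ``due to Bohman'' should be made more precise. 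Modulo that, the argument is sound, and what it buys over the paper's approach is transparency: it isolates exactly which pseudo-random property of the host graph is needed, whereas the paper outsources the entire construction to the Guo--Warnke packing theorem (whose proof, incidentally, uses closely related ideas).
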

\begin{proof}
The lower bound follows from the lower bound  on  $h_2(n, \{rrr\})$, \Cref{lem1.rrr}.\\

The upper bound follows from a result by Guo and Warnke \cite{GW}, that implies that there are two edge-disjoint triangle free subgraphs $G$ and $G’$ of $K_n$, each with independence number at most $c \sqrt{n\log n}$. We color the edges of $G$, the edges of $G’$ blue and the rest of the edges yellow. 
Then $S_{yr}^c = \alpha(G’)\leq c \sqrt{n\log n}$, $S_{yb}^c=\alpha(G)\leq c \sqrt{n\log n}$, and $S_{rb}^c \leq 6$ since red and blue graphs are triangle-free.\\

Here, we include a proof of a slightly weaker bound ($h_2(n, \h) = O(\sqrt n \log n)$), which is easily obtained by random packing:\\
We shall find two edge-disjoint triangle-free graphs each with sufficiently small independence number. We shall color the edges of one of them red, the other one blue, and the rest of the edges yellow.  Then the largest bicolored clique will have size at most the size of the largest independence set of each of these triangle-free graphs.

	As we can not directly guarantee that the desired packing exists, we shall deal with a small overlap. Consider a graph $G$ on a vertex set $[N]$  that is triangle-free, such that $\alpha(G)= O(\sqrt{N \log N})$. In particular, we have that $\Delta(G) =O(\sqrt{N \log N})$ and $|E(G)| =O(N^{3/2} \sqrt{\log N})$. The following claim asserts that we can find a copy $G'$ of $G$ on vertex set $[N]$ such that $|E(G)\cap E(G')|$ is small. A similar proof appears in Konarski and \.Zak~\cite{KZ}, but we include the short proof in the Appendix for convenience of the reader.
\begin{claim} There is a copy $G'$ of $G$ on vertex set $[N]$ such that $|E(G)\cap E(G')| \le |E(G)|^2 / \binom{N}{2}$.
\end{claim}

Consider the union of $G$ with its isomorphic image $G'$ on $[N]$ granted by the above Claim.  Since $|E(G)| = O(N^{3/2} \sqrt{\log N})$, we obtain $|E(G)\cap E(G')| \le |E(G)|^2 / \binom{N}{2} = O(N\log N)$. Let $G''$ be the graph on $[N]$ with edge set $E(G)\cap E(G')$.
Then it has at least $N/2$ vertices with degree  $O(\log N)$. These vertices induce a graph with independence number at least $\Omega(N/\log N)$, so $\alpha(G'') = \Omega(N/\log N)$.  Let $X$ be a largest independent set in $G''$, and let $N$ be selected such that  $n= |X|$. In particular, $n= \Omega (N/\log N)$, and $N= O(n \log n)$.   Now, color the edges of $G[X]$ red, edges of $G'[X]$ blue and the rest yellow.
	We see that $S_{rb}\leq 5$ since there are no  red and no blue triangles. We have that $S_{by} \leq \alpha(G)$ since any blue/yellow  clique corresponds to an independent set in $G$. Thus,
\[
	S_{by} =O(\sqrt{N\log N})= O(\sqrt{n \log n \log (n \log n)})= O( \sqrt{n} \log n),
\]
and the lemma follows.
\end{proof}

\subsection{$\mathcal H$ contains a rainbow and a  monochromatic pattern}

\begin{lemma}\label{lem2.6}
Let $H= \{ \rrr, \rby\}$. Then $h_2(n, \h) = \Omega(n^{2/3}/ \log^{3/2}n)$ and $ h_2(n, \h) = O(n^{2/3} \sqrt{\log n})$.
\end{lemma}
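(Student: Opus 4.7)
The upper bound $O(n^{2/3}\sqrt{\log n})$ follows immediately from Construction \ref{const7}, which exhibits a $3$-edge-coloring avoiding the patterns $\rby,\rrr,\yyb,\yyr$ (in particular avoiding $\mathcal H=\{\rrr,\rby\}$) in which every $2$-colored clique has size $O(n^{2/3}\sqrt{\log n})$.

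For the lower bound my plan is to combine Gallai's partition theorem (\Cref{thm:G}) with the Ramsey bound $R(3,t)=O(t^2/\log t)$ (\Cref{thm:AKS} and \Cref{cor:AKS}). Let $c$ be a $\{\rrr,\rby\}$-avoiding $3$-edge-coloring of $K_n$. Since $c$ has no rainbow triangle, \Cref{thm:G} yields a partition $V(K_n)=V_1\cup\cdots\cup V_k$ with $k\ge 2$ parts whose between-part edges use only two colors, say $\{X,Y\}$. I would split on whether $r\in\{X,Y\}$.

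If $\{X,Y\}=\{b,y\}$, then all red edges lie inside the parts. For each $V_i$ the induced red subgraph is triangle-free (no $\rrr$), so \Cref{cor:AKS} yields a red-independent set $W_i\subseteq V_i$ of size $\Omega(\sqrt{n_i\log n_i})$. Since between-part edges are in $\{b,y\}$ and $W_i$ is red-independent within $V_i$, the union $\bigcup W_i$ is a $\{b,y\}$-clique of size $\Omega\!\big(\sum_i \sqrt{n_i\log n_i}\big)$. If instead $r\in\{X,Y\}$, say $\{X,Y\}=\{r,b\}$ (the case $\{r,y\}$ is symmetric), then the reduced graph on $[k]$ is $\{r,b\}$-colored and its red subgraph is triangle-free (any red triangle there lifts to a red triangle in $c$). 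By \Cref{thm:AKS}, the reduced graph has a blue clique $\mathcal P$ of size $p=\Omega(\sqrt{k\log k})$. Within each $V_i$ with $i\in\mathcal P$, I again take a red-independent $W_i$ of size $\Omega(\sqrt{n_i\log n_i})$. Within each $W_i$ edges are in $\{b,y\}$, while between $W_i$ and $W_j$ for $i,j\in\mathcal P$ all edges are blue, so $\bigcup_{i\in\mathcal P} W_i$ is a $\{b,y\}$-clique of size $\Omega\!\big(\sum_{i\in\mathcal P}\sqrt{n_i\log n_i}\big)$.

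To reach the target bound $\Omega(n^{2/3}/\log^{3/2}n)$ I would then induct on $n$: if a single part $V_i$ carries essentially all vertices, recurse on $c|_{V_i}$ and appeal to the inductive hypothesis; otherwise the parts are reasonably balanced and, either via the sum $\sum \sqrt{n_i\log n_i}$ in Case~1 or via the combination of a large $\mathcal P$ with the within-part red-independent sets in Case~2, the single-step bound already reaches $n^{2/3}/\log^{3/2}n$. The main obstacle I expect is balancing this recursion: a single application of the Gallai/$R(3,t)$ argument yields only $\Omega(\sqrt{n\log n})$ in the unbalanced worst case (one giant part), which is weaker than the target for large $n$. The delicate point will be to arrange the recursion so that it either terminates after at most polylogarithmically many ``dominant-part'' steps (before the sum $\sum \sqrt{n_i\log n_i}$ dominates), or keeps track of the palette of the 2-colored clique being built at each level so that within-part cliques can be consistently glued to the between-part blue edges without losing more than polylogarithmic factors overall.
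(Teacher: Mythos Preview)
Your upper bound is fine and matches the paper. The lower bound plan, however, has a real gap that goes beyond ``balancing the recursion''.

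Notice that in both of your cases the $2$-colored clique you actually build is a $\{b,y\}$-clique: in Case~1 you take $\bigcup_i W_i$ with $W_i$ red-independent, and in Case~2 you take $\bigcup_{i\in\mathcal P}W_i$, again a red-independent set. Any $\{b,y\}$-clique is a red-independent set, and since the red graph is only known to be triangle-free, Kim's construction shows it can have independence number $\Theta(\sqrt{n\log n})$. So a single step of your argument can never produce a clique larger than $\Theta(\sqrt{n\log n})$; to reach $n^{2/3}$ you \emph{must} at some point output a clique that uses red. Your only mechanism for that is the recursion on $h_2$, but a recursion that passes to a part of size $\delta n$ loses a factor $\delta^{2/3}$ in the target bound, and this loss compounds over the (potentially $\Omega(\log n)$ or more) levels where one part dominates. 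The ``track the palette'' idea does not obviously help: if the top level has between-colors $\{b,y\}$ and the next level has $\{r,b\}$, the $\{r,b\}$-clique you might build at level~1 cannot be glued to anything at level~0, and the $\{b,y\}$-clique at level~1 is again capped at $\sqrt{n_1\log n_1}$.

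The paper sidesteps all of this by \emph{not} applying Gallai at the top level. Instead it takes the red connected components (``blobs''); between any two blobs all edges are monochromatic blue or yellow (no rainbow + red-connectivity). Inside each blob Gallai gives ``sub-blobs'' with between-colors $\{r,b\}$ or $\{r,y\}$, and crucially each sub-blob is red-free (else a red triangle). After dyadic bucketing so that sub-blobs have common size $k$ and blobs have $\ell$ sub-blobs each (costing two $\log n$ factors), three bounds compete: $S_{by}\ge k$ (a sub-blob), $S_{rb}\ge\ell$ (one vertex per sub-blob in a blob --- here is the red-using clique your argument lacks), and $S_{by}\gtrsim (n/\log^2 n)\sqrt{\log\ell/\ell}$ (union over blobs of AKS-independent sets of sub-blobs). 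Balancing these three gives $n^{2/3}/\log^{3/2}n$ with no recursion.
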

\begin{proof}
The upper bound follows  from Construction \ref{const7}.\\

For the lower bound, consider an arbitrary $\h$-avoiding coloring $c$ of $K_n$.  
	Consider the vertex sets of red components, which we refer to as \emph{blobs}.
	Note that all edges between any two blobs are of the same color, either blue or yellow, otherwise there is a rainbow triangle in $c$.
	Since the coloring induced by each blob is Gallai, by \Cref{thm:G} we have that each blob is a disjoint union of sets which we call {\it sub-blobs}, so that 
	all edges between any two sub-blobs are of  the same color and the total number of colors between sub-blobs is at most $2$. Note that since each blob is a red connected component, one of the colors between sub-blobs must be red and another is blue or yellow. Note also that each sub-blob spans a blue/yellow clique. Otherwise, there is a red triangle in $c$.\\

	We shall delete some vertices of the graph such that $c$ restricted to the remaining part is easier to analyze. 
	Specifically, we will end up with a coloring $c''$ of a complete graph on at least $C''n/\log^2 n$ vertices (for some constant $C''$) in which all blobs contain the same number of sub-blobs and all sub-blobs overall have the same size. In addition, this coloring will have only red and blue edges between sub-blobs of any given blob. \\

	We can assume that each blob has at least two vertices because if there are at least $n/2$  blobs of size $1$, 
	they correspond to a blue/yellow clique on at least $n/2$ vertices. 
	It could be assumed, without loss of generality,  that non-red edges between sub-blobs of any given blob are blue. Indeed, either at least $n/4$ vertices  are spanned by blobs with red/blue between the sub-blobs or at least $n/4$ vertices  are spanned by blobs with red/yellow between sub-blobs. Let us assume the former. \\
	
	We shall split the sub-blobs according to sizes. Let $X_i$ be the union of all sub-blobs of sizes from $2^i$ to $2^{i+1}-1$, $i=0, \ldots,  \log n$.
	Consider $i$ for which $X_i$ is largest, i.e., $|X_i|\geq \frac14 n/ \log n$.
	Delete at most half of the vertices from sub-blobs in $X_i$ so that all of them are of the same size, and call the resulting set $X_i'$.
	Now, consider a coloring $c'$ that is the restriction of $c$ to $X_i'$. We see that $c'$ has the same structure as $c$ but with all sub-blobs of the same size and total number of vertices  $n'\geq \frac18 n/\log n$.  Let $k$ be the size of each sub-blob.
	If $k >n^{2/3}$ we are done since each sub-blob spans a blue/yellow clique, and then $S_{by} > n^{2/3}$.
	Thus, $k<n^{2/3}$.   Let $Y_j$ be the union of blobs each having sizes from $2^j$ to $2^{j+1}-1$, $j=0, \ldots,  \log n'$.
	Consider $Y_j$ of largest size so that $|Y_j| \ge n'/\log n'$.  By deleting at most half of the vertices in $Y_j$ we can assume that all blobs in $Y_j$ have the same number of vertices, and hence exactly the same number of sub-blobs. Denote the number of sub-blobs by $\ell$.
	Again, by restricting $c'$ to the resulting set, we have an $\mathcal{H}$-avoiding coloring $c''$ on $n''\geq C''n/\log^2 n$ vertices (for some constant $C''$) with each blob having $\ell$ sub-blobs
	and each sub-blob having $k$-vertices. Recall that $k<n^{2/3}$.\\

	Now we shall analyze $c''$. 
	Since the red graph is triangle-free, each blob has a blue/yellow clique of order at least $C\sqrt{\ell \log  \ell}\cdot  k$ for some constant $C > 0$, by \Cref{cor:AKS}. Taking a union of these cliques over all blobs, we see that 
	$$S_{by} \geq C\sqrt{\ell \log  \ell}\cdot  k \cdot n''/ k \ell = C\sqrt{ \log \ell}\cdot n''/ \sqrt{\ell} \ge CC'' \sqrt{\log \ell /\ell }\cdot n /(\log^2 n).$$
	Thus, if $\ell <n^{2/3}$, we are done as in this case $S_{by} \geq C' n^{2/3}/ \log^{3/2} n$, for some constant $C'$.
	Thus, $\ell \geq n^{2/3}$. However, in this case pick a blob and pick a vertex from each sub-blob of this blob. This gives a red/blue clique on $\ell \geq n^{2/3}$ vertices. 
\end{proof}

\section{Forbidding $\mathcal H$ with $|\mathcal H| =3$}\refstepcounter{sec}

\begin{proposition}\label{prop:3}
Any family $\mathcal H$ consisiting of 3 distinct patterns can be obtained by applying a color permutation to all patterns in one of the following families: 
\begin{itemize}
	\item $\{\rrb, \rry, \bbr\}$, $\{\rrb, \rry, \bby\}$, $\{\rrb, \bbr, \yyr\}$, $\{\rrb, \bby, \yyr \}$,
	\item $\{\rrb, \rry, \rby \}$,  $\{\rrb, \bbr, \rby \}$,  $\{\rry, \bby, \rby \}$,  $\{\rrb, \bby, \rby \}$,
	\item $\{\rrr,\bbb,\yyy\} , \{\rrr,\bbb,\rrb\}, \{\rrr,\bbb,\rry\}, \{\rrr,\bbb, \yyr\}$, 
	\item $\{\rrr,\rrb, \rry\}, \{\rrr,\rrb, \bbr\}, \{\rrr,\rrb, \bby\},  \{\rrr, \rrb, \yyr\}, \\\{\rrr, \rrb, \yyb\},  
	\{\rrr, \bbr, \bby\}, \{\rrr,\bbr, \yyr\}, \{\rrr,\bbr,\yyb\}, \{\rrr,\bby, \yyb\}$,
	\item $\{\rby, \rrr, \bbb\} $, $\{\rby, \rrr, \rrb\}  $, $\{\rby, \rrr, \bbr\}  $, $\{\rby, \rrr, \bby\}  $.
\end{itemize} 
\end{proposition}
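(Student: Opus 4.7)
The plan is to reduce Proposition~\ref{prop:3} to a finite orbit count of the $S_3$-action by color permutations on the $\binom{10}{3} = 120$ three-element subsets of patterns. Partition the ten patterns into three $S_3$-invariant classes: the monochromatic $M = \{\rrr, \bbb, \yyy\}$, the rainbow $R = \{\rby\}$, and the ``doubles'' $D = \{\rrb, \rry, \bbr, \bby, \yyr, \yyb\}$. Then the triple $(m, \rho, d) = (|\h \cap M|, |\h \cap R|, |\h \cap D|)$ with $m+\rho+d=3$ is $S_3$-invariant, and the seven possibilities $(3,0,0), (2,1,0), (2,0,1), (1,1,1), (1,0,2), (0,1,2), (0,0,3)$ distribute the $25$ families claimed by the statement into groups of sizes $1,1,3,3,9,4,4$ respectively. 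I handle each case separately.

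For the cases with $m \ge 1$ the enumeration is mechanical. When $m=3$ the family is unique. When $m=2$, apply $S_3$ to fix the monochromatic pair as $\{\rrr, \bbb\}$; the remaining stabilizer $\langle (r\,b)\rangle$ has three $2$-cycles on $D$, namely $\{\rrb,\bbr\}, \{\rry,\bby\}, \{\yyr,\yyb\}$, yielding three families for $(2,0,1)$, and the rainbow singleton gives one family for $(2,1,0)$. When $m=1$, fix the mono as $\rrr$; its stabilizer $\langle (b\,y)\rangle$ acts on $D$ with three $2$-cycles, so for $(1,1,1)$ the three orbits give three families, while for $(1,0,2)$ the action on pairs from $D$ fixes exactly the three $2$-cycle pairs and splits the remaining twelve pairs into six orbits of size two, for a total of nine families.

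For $m=0$ the key structural fact is that $S_3$ acts regularly on $D$ via $\sigma \mapsto \sigma(\rrb)$, so the stabilizer of any $T \subseteq D$ is a subgroup of $S_3$ whose index equals $|S_3 \cdot T|$. For $(0,1,2)$, pairs from $D$ split into four types: \emph{same majority} (e.g.\ $\{\rrb,\rry\}$), \emph{same minority} (e.g.\ $\{\rry,\bby\}$), \emph{reverse} (e.g.\ $\{\rrb,\bbr\}$), and \emph{chain}, in which the minority of one equals the majority of the other (e.g.\ $\{\rrb,\bby\}$); a short calculation shows the $3$-cycle $(r,b,y)$ identifies the two chain orientations, confirming four orbits. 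For $(0,0,3)$ among the $\binom{6}{3}=20$ triples, no triple is fixed by a transposition since each transposition acts on $D$ by three $2$-cycles and a triple cannot be a union of these; the $3$-cycle has exactly two invariant triples $\{\rrb,\bby,\yyr\}$ and $\{\rry,\bbr,\yyb\}$, which any transposition swaps, contributing one $S_3$-orbit of size $2$, while the remaining $18$ triples have trivial stabilizer and hence form three orbits of size $6$.

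The main obstacle is to verify that the three chosen size-$6$ representatives $\{\rrb,\rry,\bbr\}, \{\rrb,\rry,\bby\}, \{\rrb,\bbr,\yyr\}$ are genuinely pairwise inequivalent. I distinguish them using the $S_3$-invariant \emph{pair-type multiset}: each $3$-subset of $D$ contains three $2$-subsets, each of which carries a label in $\{$majority, minority, reverse, chain$\}$ from the $(0,1,2)$-analysis. The three representatives carry multisets $\{\text{maj},\text{rev},\text{chn}\}, \{\text{maj},\text{min},\text{chn}\}, \{\text{rev},\text{min},\text{chn}\}$, which are pairwise distinct; together with the $3$-cycle orbit, whose multiset is $\{\text{chn},\text{chn},\text{chn}\}$, this certifies four distinct orbits, and the arithmetic $2 + 3\cdot 6 = 20 = \binom{6}{3}$ confirms exhaustiveness. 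Summing $1+1+3+3+9+4+4 = 25$ families across all cases matches the statement.
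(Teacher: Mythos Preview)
Your argument is correct and, if anything, more careful than the paper's. Both proofs partition the $120$ three-element families according to the counts $(m,\rho,d)$ of monochromatic, rainbow, and ``double'' patterns, but from there the paper's proof simply walks through each case by hand, naming sub-cases (``both non-monochromatic triangles have majority color red'', etc.) without ever explicitly verifying that the listed representatives are pairwise inequivalent or that the enumeration is exhaustive. You instead use the orbit--stabilizer framework: identifying the stabilizer of the partially fixed data in each case, counting its orbits on the remaining choices, and then introducing the pair-type invariant $\{\text{maj},\text{min},\text{rev},\text{chn}\}$ to separate the four orbits of triples in $D$. This buys you genuine certificates of inequivalence and an arithmetic cross-check ($2+3\cdot6=20$, $1+1+3+3+9+4+4=25$) that the paper's informal enumeration lacks. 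The paper's approach is shorter to write down and perhaps easier to follow for a reader who only wants to be convinced the list is complete; yours is the one to use if someone actually doubts that two of the listed families might be color-equivalent.
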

The proof of this Proposition is given in Appendix.

\subsection{$\mathcal H$ contains no rainbow and no monochromatic pattern}

\begin{lemma}\label{lem3.1}
Let $\mathcal H \in \{ \{\rrb, \rry, \bbr\}, \{\rrb, \rry, \bby\}\}$. Then we have $h_2(n, \mathcal H) = \ceil{\sqrt n}$.
\end{lemma}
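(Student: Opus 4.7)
The plan is essentially to reduce everything to results already established: the lower bound via monotonicity and Lemma 2.1, and the upper bound via Construction 1.

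For the lower bound, I first observe that both families in question contain $\{rrb, rry\}$ as a subset. By the monotonicity remark made early in the paper (namely, $\mathcal H \subseteq \mathcal H'$ implies $h_2(n, \mathcal H) \le h_2(n, \mathcal H')$, since any $\mathcal H'$-avoiding coloring is also $\mathcal H$-avoiding), we get
\[
h_2(n, \mathcal H) \ge h_2(n, \{rrb, rry\}) = \lceil \sqrt{n}\,\rceil,
\]
where the last equality is \Cref{lem2.1}. This handles both cases simultaneously.

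For the upper bound, I invoke Construction \ref{const1}, which exhibits a three-edge-coloring of $K_n$ simultaneously avoiding all four patterns $rrb$, $rry$, $bbr$, and $bby$, and whose largest two-colored clique has size at most $\lceil \sqrt{n}\,\rceil$. Since each of the families $\{rrb, rry, bbr\}$ and $\{rrb, rry, bby\}$ is contained in $\{rrb, rry, bbr, bby\}$, the construction is $\mathcal H$-avoiding in either case, so $h_2(n, \mathcal H) \le \lceil \sqrt{n}\,\rceil$.

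Combining the two bounds yields the claimed equality $h_2(n, \mathcal H) = \lceil \sqrt{n}\,\rceil$. There is no genuine obstacle here; the lemma is a bookkeeping consequence of the two-pattern result and the fact that Construction \ref{const1} happens to forbid all four ``two-red-majority/two-blue-majority'' non-monochromatic patterns at once.
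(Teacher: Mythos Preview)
Your proof is correct and takes essentially the same approach as the paper: monotonicity for the lower bound and Construction~\ref{const1} for the upper bound. The only cosmetic difference is that the paper invokes \Cref{lem1.rrb} (the single pattern $\{rrb\}$) rather than \Cref{lem2.1} for the lower bound, but both yield $\lceil\sqrt{n}\,\rceil$ via the same monotonicity argument.
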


\begin{proof}
The lower bound follows from  \Cref{lem1.rrb} since  $h_2(n, \mathcal H) \ge h_2(n, \{rrb\}) = \ceil{\sqrt n}.$ 
The upper bound follows from Construction \ref{const1}.
\end{proof}

\begin{lemma}\label{lem3.2}
Let $\mathcal H = \{\rrb, \bbr, \yyr\}$. Then we have $h_2(n, \mathcal H) = \ceil{\frac n2}$.
\end{lemma}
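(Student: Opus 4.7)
The upper bound is immediate from \Cref{const2}, whose coloring avoids the larger family $\{\rrr,\yyy,\rrb,\rry,\bbr,\bby,\yyr\}\supseteq\mathcal H$ and therefore witnesses $h_2(n,\mathcal H)\le\lceil n/2\rceil$.

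For the lower bound, fix $c\in\F(n,\mathcal H)$. The three forbidden patterns yield the basic observations that for every vertex $v$, $N_r(v)$ contains no blue edge (by $\rrb$-freeness) and $N_b(v), N_y(v)$ each contain no red edge (by $\bbr$- and $\yyr$-freeness). Hence $\{v\}\cup N_r(v)$ is a red/yellow clique and both $\{v\}\cup N_b(v)$ and $\{v\}\cup N_y(v)$ are blue/yellow cliques. Writing $\Delta_r,\Delta_b,\Delta_y$ for the maximum degrees in the respective color classes, if $\max(\Delta_r,\Delta_b,\Delta_y)\ge \lceil n/2\rceil-1$ one of these cliques already has size at least $\lceil n/2\rceil$; so I may assume all three maximum degrees are at most $\lceil n/2\rceil-2$.

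The crux of the proof is then to show that in this regime the red graph is triangle-free. Suppose $uvw$ were a red triangle; a short case analysis of the $27$ triples $(c(xu),c(xv),c(xw))$ for an external $x$ shows that, to avoid $\rrb,\bbr,\yyr$ on each of the triangles $uvx,uwx,vwx$, the triple must lie in $\{(r,r,r),(r,r,y),(r,y,r),(y,r,r)\}$. In particular $x$ contributes at least two red and no blue edge to $\{u,v,w\}$. Summing over the $n-3$ choices of $x$ gives $|N_r(u)|+|N_r(v)|+|N_r(w)|-6 \ge 2(n-3)$, i.e.\ $|N_r(u)|+|N_r(v)|+|N_r(w)|\ge 2n$. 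Since each term is at most $\lceil n/2\rceil-2$, this would force $3(\lceil n/2\rceil-2)\ge 2n$, which fails for every $n\ge 1$, a contradiction.

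To finish, note that the red graph cannot be empty either: $|N_b(v)|+|N_y(v)|=n-1>2(\lceil n/2\rceil-2)$ would contradict the degree bounds. So pick any red edge $uv$. Triangle-freeness gives $N_r(u)\cap N_r(v)=\emptyset$, while $\mathcal H$-avoidance on $uv$ forces $N_b(u)\cap N_b(v)=N_y(u)\cap N_y(v)=N_r(u)\cap N_b(v)=N_b(u)\cap N_r(v)=\emptyset$. Therefore for each external $x$ the pair $(c(xu),c(xv))$ lies in $\{(r,y),(y,r),(b,y),(y,b)\}$; in every case exactly one coordinate is yellow, yielding $|N_y(u)|+|N_y(v)|=n-2$. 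Hence $\max(|N_y(u)|,|N_y(v)|)\ge\lceil(n-2)/2\rceil=\lceil n/2\rceil-1$, and the larger of the blue/yellow cliques $\{u\}\cup N_y(u)$, $\{v\}\cup N_y(v)$ has size at least $\lceil n/2\rceil$. The main obstacle is executing the case analysis that yields red triangle-freeness; all remaining steps are short counting.
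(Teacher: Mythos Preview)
Your argument is correct but takes a genuinely different route from the paper's. Both use Construction~\ref{const2} for the upper bound. For the lower bound the paper argues structurally: if a red triangle exists, a short local analysis shows there can be no blue edge anywhere, so the whole graph is red/yellow; if not, an examination of chords along a shortest red odd cycle shows the red graph is bipartite, yielding a blue/yellow clique of size $\lceil n/2\rceil$. You instead split on degree thresholds: if any colour attains degree $\ge\lceil n/2\rceil-1$ you are done, and in the low-degree regime a counting argument (every external vertex must send at least two red edges into a putative red triangle) rules out red triangles, after which the observation that every vertex outside a red edge $uv$ sends exactly one yellow edge to $\{u,v\}$ forces a large yellow neighbourhood. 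Your approach is somewhat more elementary in that it sidesteps the odd-cycle analysis entirely; the paper's approach buys you the stronger structural conclusions (the red graph is bipartite, and a red triangle forces the coloring to be entirely red/yellow), which your counting does not recover. One cosmetic point: your final step actually contradicts the standing bound $\Delta_y\le\lceil n/2\rceil-2$, so the low-degree case is vacuous; your presentation of this as ``finding a clique anyway'' is logically fine but could be stated more crisply as a contradiction.
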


\begin{proof}
The upper bound follows from Construction \ref{const2}.\\

\noindent For the lower bound, let $c$ be an $\mathcal H$-avoiding edge-coloring of $K_n$. 

\noindent
Case 1: There is a red triangle in $c$.\\
Let the vertex set of a red triangle be $\{u, v, w\}$. Then there cannot be a blue edge adjacent to the triangle. Assume the contrary, i.e., there is a blue edge $au$.  Then $av$ cannot be red  or blue, since then $uva$ would induce an $rrb$ or a $bbr$ triangle respectively, i.e., $av$ has to be yellow. The same holds for $aw$, but then $vwa$ forms a $yyr$ triangle, a contradiction.  Assume there is a blue edge $xz$ in the graph. 
Since each of $x$ and $z$ send only red and yellow edges to $\{u,v,w\}$, and each of $x$ and $z$ send at most one yellow edge to $\{u,v,w\}$, $x$ and $z$ have a common red neighbor in $\{u,v,w\}$, say $u$. 
 But then $uxz$ is a $rrb$ triangle, a contradiction.  Thus, if the graph contains a red triangle, it contains no blue edge and hence, we have a red/yellow clique of size $n$. \\

\noindent
Case 2: $c$  contains no red triangle.\\
We show that in this case the red graph is bipartite. We need to show that there is no red odd cycle. Assume the contrary, and let $v_1v_2\cdots v_kv_1$ ($k\ge 5$) be a shortest red odd cycle. Then we cannot have any red chord of the cycle, since that would create a shorter red odd cycle. 
Assume there is an index $i$ such that $v_1v_i$ and $v_1v_{i+1}$ have the same color. But then $v_1v_iv_{i+1}$ create a $bbr$ or  a $yyr$ triangle. Also, the edge $v_1v_3$  has to be yellow, since otherwise $v_1v_2v_3$  creates a $bbr$ triangle. Similarly,  $v_1v_{k-1}$ is yellow. But then, combining these two facts we obtain that all edges of the form $v_1v_i$ with $i$ odd are yellow, including $v_1v_{k-2}$. Then $v_{k-2}v_{k-1}v_1$ forms a $yyr$ triangle, a contradiction. 

Thus, we have no odd red cycle, so the red graph is bipartite. But then in any bipartition there is a bipartite class of size at least $\ceil{\frac n2}$ in which only colors blue and yellow appear. Hence, we have a 2-colored set of size $\ceil{\frac n2}$. 
\end{proof}

\begin{lemma}\label{lem3.3}
Let $\mathcal H = \{\rrb, \bby, \yyr \} $. Then we have $h_2(n, \mathcal H) = \ceil{\frac n2}$ for $n \neq 7$ and $h_2(7, \h) = 3$.
\end{lemma}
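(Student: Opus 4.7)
The upper bound $h_2(n,\h) \le \ceil{n/2}$ follows at once from Construction~\ref{const2}, since $\h = \{\rrb,\bby,\yyr\}$ is contained in the family avoided there. For the sharper upper bound $h_2(7,\h) \le 3$, I would use the cyclic Cayley coloring of $K_7$ on vertex set $\mathbb{Z}_7$, assigning red, yellow, blue to edges with $|i-j| \bmod 7 \in \{1,6\}, \{2,5\}, \{3,4\}$ respectively: a direct check shows the only triangle types are $rry$, $yyb$, $bbr$, and rainbow (so $\h$ is avoided), while every $2$-colored subgraph must omit one of the three difference classes and hence corresponds to a subset of $\mathbb{Z}_7$ avoiding a given difference, which has size at most $3$.

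For the lower bound, fix any $\h$-avoiding coloring $c$ of $K_n$. The starting point is a local structure lemma: two incident red edges must close with a red or yellow edge (no $\rrb$), so $N_r(v)$ spans no blue edge; symmetrically $N_b(v)$ spans no yellow and $N_y(v)$ spans no red. Assuming additionally that $c$ has no monochromatic triangle, $N_r(v)$ becomes a yellow clique, $N_b(v)$ a red clique, and $N_y(v)$ a blue clique. Then for any $x \in N_r(v)$, the set $N_r(v) \setminus \{x\} \subseteq N_y(x)$ would need to span only yellow edges (from being inside $N_r(v)$) and only blue edges (from being inside $N_y(x)$), forcing $|N_r(v)| \le 2$. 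Hence every color degree is at most $2$, so $n - 1 \le 6$, i.e. $n \le 7$. In particular, for $n \ge 8$ every $\h$-avoiding coloring contains a monochromatic triangle.

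Suppose now $c$ contains a monochromatic triangle; by the cyclic invariance of $\h$ under $r \mapsto b \mapsto y \mapsto r$ we may take it to be a red triangle $\{u, v, w\}$. A short case analysis on the triangles $auv$, $auw$, $avw$ shows that for every outside vertex $a$ the multiset $\{c(au), c(av), c(aw)\}$ is one of $\{r,r,r\}, \{r,r,y\}, \{b,b,b\}, \{b,b,y\}$: a pair equal to $\{r, b\}$ would make the corresponding triangle an $\rrb$, and a pair equal to $\{y, y\}$ would make it a $\yyr$. This partitions $V \setminus \{u, v, w\}$ into $R_a$ (majority red) and $B_a$ (majority blue). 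A pigeonhole argument gives, for any two vertices of $R_a$ (resp.\ $B_a$), a common triangle vertex where both send red (resp.\ blue); combined with the forbidden patterns, this forces edges inside $R_a$ to lie in $\{r,y\}$ and edges inside $B_a$ to lie in $\{b,r\}$. Hence $\{u, v, w\} \cup R_a$ is a red/yellow clique of size $|R_a| + 3$, and $B_a$ itself is a blue/red clique of size $|B_a|$. Since $|R_a| + |B_a| = n - 3$, we obtain $h_2(c) \ge \max(|R_a| + 3, |B_a|) \ge \ceil{n/2}$.

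What remains is $n \le 7$ with no monochromatic triangle. For $n \le 4$ the target $\ceil{n/2} \le 2$ is trivial. For $n \in \{5, 6, 7\}$ it suffices to find any non-rainbow triangle in $c$, since this produces a $2$-colored $3$-clique, matching $\ceil{n/2}$ for $n \in \{5, 6\}$ and the sharper claim $h_2 \ge 3$ for $n = 7$. Such a triangle must exist: if every triangle of $c$ were rainbow, then for any fixed vertex the $n - 1$ incident edges would be pairwise distinctly colored (else two same-colored edges at that vertex could not complete a rainbow triangle), forcing $n - 1 \le 3$. The main obstacle is the multiset case analysis around the monochromatic triangle; once the four allowed multisets are identified, the rest of the argument is careful bookkeeping.
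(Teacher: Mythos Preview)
Your proof is correct and follows essentially the same strategy as the paper's: split according to whether a monochromatic triangle exists, and in the monochromatic case partition the outside vertices by the colors they send to a red triangle (the paper uses a largest red clique $R$ instead of a single triangle, but the pigeonhole step is identical since $|R|\ge 3$).

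Your handling of the small cases is slightly different and in fact cleaner than the paper's. For $n\le 6$ with no monochromatic triangle, the paper argues that the red graph contains no $C_5$ and is therefore bipartite, yielding a blue/yellow clique of size $\ceil{n/2}$; you instead observe that for $n\ge 5$ not every triangle can be rainbow, so some triangle is $2$-colored and already gives size $3=\ceil{n/2}$. You also provide the $K_7$ Cayley coloring explicitly for the upper bound $h_2(7,\h)\le 3$, whereas the paper derives this coloring from the degree constraints in the no-monochromatic case.
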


\begin{proof}
The upper bound follows  from Construction \ref{const2}.\\

For the lower bound, let $c$ be an $\mathcal H$-avoiding coloring of $K_n$ on a vertex set $V$.
Assume first that there is a monochromatic triangle. Because of symmetry on forbidden patterns, we may assume that there is a red triangle.
Let $R$ be a largest red clique in $c$. Then $|R| \ge 3$. Note that every vertex not in $R$ sends a yellow or a blue edge to $R$.
Then every vertex outside of $R$ sends at most one yellow edge to $R$ (otherwise we have a $yyr$ triangle). In addition, no vertex outside of $R$ sends both red and blue edges to $R$, 
otherwise we get an $rrb$ triangle.

Thus, $V-R= O\cup P$, where  $O$ is the set of vertices in $V-R$ such that each  edge between $O$ and $R$ is yellow or red and $P$ is  the set of vertices in $V-R$ such that each edge between $P$ and $R$ is yellow or blue. Note that $O$ and $P$ are disjoint. 

Every vertex from $O$ sends $|R|-1$ red  edges to $R$. Then any two vertices in $O$ have a common red neighbor in $R$, and hence there cannot be a blue edge induced by  $O$. 
Similarly, any two vertices in $P$ have a common blue neighbor in $R$ and hence, there cannot be a yellow edge induced by $P$. 

Thus, we have either $|P | \ge \ceil{\frac n2}$ which yields a red/blue clique of size $\ceil{\frac n2}$ or $|R \cup  O| \ge \ceil{\frac n2}$, which is a red/yellow clique of desired size. 

It remains to deal with the case where we have no monochromatic triangle. In this case, the red neighborhood of any vertex induces a yellow clique, the blue neighborhood induces a red clique,  and the yellow neighborhood induces a blue clique, so the maximum degree at each vertex must be at most  $6$, i.e., we only need to consider colorings of $K_n$ with $n \le 7$. 

For $n=7$ every vertex must have degree $2$ in any color, so each color class is a $2$-factor.  Since there is no monochromatic triangle, each color class must be a $C_7$ and up to isomorphism there is a unique such coloring (see also Construction \ref{const:3n/7}). One can create such a coloring by ordering the vertices cyclically and coloring an edge with vertices at distance $1$, $2$, $3$ along the cycle yellow, red, and blue respectively. In this coloring the largest $2$-colored clique has size $3$. 

For $n \le 6$, observe that there is no red $C_5$,  otherwise  all other edges induced by the vertex set of this $C_5$ are yellow since there are no $rrr$ and no $rrb$ triangles. 
But then there is a $yyr$ triangle. Thus, the red graph has no odd cycles and so is bipartite. Therefore, the blue/yellow graph contains a clique of size $\ceil{\frac n2}$.  
\end{proof}

\subsection{$\mathcal H$ contains a rainbow but no monochromatic pattern}


\begin{lemma}\label{lem3.4}
Let $\mathcal H = \{\rrb, \rry, \rby\}$. Then we have $h_2(n,\mathcal H) = \Omega(\sqrt{n})$ and $h_2(n, \mathcal H) = O(\sqrt{n\log n}).$
\end{lemma}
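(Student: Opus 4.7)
The plan is to handle the upper and lower bounds separately, and to observe that the lower bound actually only uses the non-rainbow forbidden patterns, while the upper bound follows from an existing construction.

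For the upper bound, I would simply invoke Construction \ref{const5}, which is an explicit coloring avoiding exactly the patterns $\rby$, $\rrb$, $\rry$ and having $h_2(c) = O(\sqrt{n \log n})$. Since its forbidden set coincides with $\mathcal{H}$, this immediately gives $h_2(n, \mathcal{H}) \le O(\sqrt{n \log n})$.

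For the lower bound, the key structural observation is that the forbidden patterns $\rrb$ and $\rry$ together force the red graph to be a disjoint union of cliques. Indeed, if $uv$ and $vw$ are both red edges sharing the vertex $v$, then the third edge $uw$ cannot be blue (else $uvw$ is an $rrb$ triangle) and cannot be yellow (else $uvw$ is an $rry$ triangle), so $uw$ must also be red. Hence red neighborhoods are cliques, and the red edges partition the vertex set $V$ into vertex-disjoint red cliques $R_1, \ldots, R_t$ (the red connected components), which I would call \emph{red blobs}.

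Given this structure, let $k = \max_i |R_i|$. On the one hand, $R_i$ witnesses a monochromatic (hence two-colored) clique of size $k$. On the other hand, since $\sum_i |R_i| = n$ and each $|R_i| \le k$, we have $t \ge \lceil n/k \rceil$, so by choosing one vertex from each red blob we obtain a set $T$ of $t$ vertices with no red edge inside, i.e., a blue/yellow clique of size at least $\lceil n/k \rceil$. Therefore
\[
h_2(c) \ge \max\left\{k, \left\lceil \tfrac{n}{k} \right\rceil\right\} \ge \sqrt{n},
\]
which gives $h_2(n, \mathcal{H}) = \Omega(\sqrt{n})$. No obstacle is expected here — the lower bound is immediate from the blob structure (and does not even use the forbidden rainbow pattern). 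Closing the gap between $\sqrt{n}$ and $\sqrt{n \log n}$ would be the interesting problem, but that is not required for this lemma.
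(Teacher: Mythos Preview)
Your proof is correct. The upper bound via Construction~\ref{const5} is exactly what the paper does.

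For the lower bound you take a different, more self-contained route than the paper. The paper simply invokes monotonicity, $h_2(n,\mathcal H)\ge h_2(n,\{rrb,rby\})$, and cites \Cref{lem2.2}, whose lower bound in turn comes from \Cref{lem1.rrb} (the max-red-degree argument for $\{rrb\}$ alone). You instead use \emph{both} non-rainbow patterns $rrb$ and $rry$ to deduce that the red graph is a disjoint union of cliques, and then balance the size of the largest red blob against the number of blobs. Your argument is arguably cleaner and more structural for this particular $\mathcal H$, since it exploits the full clique decomposition that the pair $\{rrb,rry\}$ forces; the paper's chain of reductions is more modular but only ever uses that $rrb$ is forbidden. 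Both yield the same $\Omega(\sqrt{n})$ bound, and as you note, neither uses the rainbow pattern.
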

\begin{proof}
By \Cref{lem2.2} we obtain 
$h_2(n, \mathcal H) \ge h_2(n, \{rrb, rby\}) = \Omega(\sqrt{n})$.
The upper bound follows from  Construction \ref{const5}.
\end{proof}

\begin{lemma}\label{lem3.5}
Let $\mathcal H = \{\rrb, \bbr, \rby\}$. Then we have $h_2(n, \mathcal H) = \ceil{\frac n2}+1$.
\end{lemma}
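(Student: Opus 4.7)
The plan is to exploit how restrictive $\h = \{\rrb, \bbr, \rby\}$ is on triangles: any triangle containing both a red and a blue edge would force its third edge to be red (giving $\rrb$), blue (giving $\bbr$), or yellow (giving $\rby$), all of which are forbidden. Hence no such triangle exists, and consequently no vertex can be incident to both a red and a blue edge---otherwise a red neighbor and a blue neighbor would complete a forbidden triangle through that vertex.

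Given this, I would partition $V(K_n) = V_R \sqcup V_B \sqcup V_Y$, where $V_R$ (resp.\ $V_B$) is the set of vertices incident to at least one red (resp.\ blue) edge, and $V_Y$ is the remaining set, consisting of vertices incident only to yellow edges. Any edge between $V_R$ and $V_B$ is forced to be yellow, since its $V_B$-endpoint has no red edges and its $V_R$-endpoint has no blue edges. Therefore $V_R \cup V_Y$ spans a red/yellow clique and $V_B \cup V_Y$ spans a blue/yellow clique; crucially, for any single $w \in V_B$ the edges from $w$ into $V_R \cup V_Y$ are all yellow, so $V_R \cup V_Y \cup \{w\}$ remains a red/yellow clique of size $|V_R| + |V_Y| + 1$, and symmetrically for the other color pair.

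Assuming both $V_R$ and $V_B$ are nonempty, the larger of the two resulting cliques has size at least
\[
\max(|V_R|,|V_B|) + |V_Y| + 1 \geq \left\lceil \frac{|V_R|+|V_B|}{2} \right\rceil + |V_Y| + 1 = \left\lceil \frac{n+|V_Y|}{2} \right\rceil + 1 \geq \left\lceil \frac{n}{2} \right\rceil + 1.
\]
If one of $V_R, V_B$ is empty, say $V_B = \emptyset$, then the whole graph is a red/yellow clique of size $n \geq \lceil n/2 \rceil + 1$, and the bound holds trivially. The matching upper bound is provided by Construction \ref{const3}. I do not foresee a serious obstacle in this argument; the only subtle step is noticing the ``extend by one extra vertex from the opposite side'' trick that upgrades the naive $\lceil n/2 \rceil$ bound---which is what pigeonholing $V_R \cup V_Y$ against $V_B \cup V_Y$ alone would give---to the tight $\lceil n/2 \rceil + 1$.
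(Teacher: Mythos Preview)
Your proof is correct and follows essentially the same approach as the paper: both arguments hinge on the observation that no vertex can be incident to both a red and a blue edge, then take a majority side and adjoin one additional vertex from the other side (whose edges across are forced to be yellow) to gain the extra $+1$. Your explicit tripartition $V_R \sqcup V_B \sqcup V_Y$ and the accompanying inequality are slightly more detailed than the paper's two-sentence version, but the idea is identical.
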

\begin{proof}
For the lower bound,  let $c$ be an $\mathcal H$-avoiding coloring of $K_n$. 
Assume there is a vertex $v$ incident to a red edge $vx$ and a blue edge $vy$. But then the edge $xy$ cannot be colored. Thus, w.l.o.g at least $\ceil{\frac n2}$ vertices are not incident to a red edge. Taking a maximum set of vertices not incident to a red edge and an arbitrary additional vertex (if it exists) creates a blue/yellow clique, i.e., we have $h_2(n, \mathcal H) \ge \ceil{\frac n2}+1$. \\
The upper bound follows from Construction \ref{const3}.
\end{proof}

\begin{lemma}\label{lem3.6}
Let $\mathcal H = \{\rry, \bby, \rby\}$.  Then we have $h_2(n,\mathcal H) = \Omega(\sqrt{n})$ and $h_2(n, \mathcal H) = O(\sqrt{n\log n}).$
\end{lemma}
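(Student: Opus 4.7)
The plan is to establish the upper bound by invoking Construction~\ref{const6} directly: it produces an edge-coloring avoiding exactly the three forbidden patterns $\{\rby, \rry, \bby\}$ and realizes $h_2(c) = O(\sqrt{n\log n})$, which is precisely what we need.

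For the lower bound, my first step will be a structural observation: the set $\{\rry, \bby, \rby\}$ consists of precisely the triangles with exactly one yellow edge. Consequently, if $c$ is an $\mathcal{H}$-avoiding coloring of $K_n$ and both $uw, vw$ are non-yellow in $c$, then $uv$ must also be non-yellow, for otherwise $uvw$ would realize one of the forbidden patterns. Hence, in the non-yellow subgraph $G$ of $c$, every vertex's neighborhood in $G$ is itself a clique in $G$. A standard argument (any two vertices in the same component of $G$ are joined by a shortest path of length $1$, lest the middle vertex of a length-$2$ path witness a non-edge in its own neighborhood) then forces $G$ to decompose as a disjoint union of cliques $C_1, \ldots, C_k$, with all edges between distinct components colored yellow.

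Next, I will extract two lower bounds from this decomposition. Each $C_i$ is a complete graph colored red/blue, so it spans a red/blue clique of size $|C_i|$; thus $S_{rb}^c \ge m$, where $m := \max_i |C_i|$. On the other hand, selecting one vertex from each $C_i$ yields a $k$-vertex clique whose edges are all yellow (since edges between different components are yellow). In particular, this clique is trivially a red/yellow clique, so $S_{ry}^c \ge k \ge n/m$. Combining these gives $h_2(c) \ge \max(m, n/m) \ge \sqrt{n}$, completing the lower bound.

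The heart of the argument is the structural decomposition of $G$; everything else follows by elementary clique-counting. I expect that tightening the bound to match the $O(\sqrt{n\log n})$ upper bound would require invoking Ramsey inside each $C_i$ (to boost $\max(S_{ry}^c, S_{by}^c)$ beyond the crude estimate $k$) and then optimizing over the clique sizes, but the weaker $\Omega(\sqrt{n})$ stated in the lemma already drops out cleanly from the disjoint-clique structure, so no such refinement is needed here.
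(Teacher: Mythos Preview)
Your proof is correct, and the upper bound matches the paper's approach exactly (Construction~\ref{const6}). For the lower bound, however, you take a genuinely different route. The paper simply observes that $\{rry, rby\} \subseteq \mathcal{H}$, applies the color permutation swapping $b$ and $y$ to invoke \Cref{lem2.2} (which in turn rests on \Cref{lem1.rrb}), and concludes $h_2(n,\mathcal{H}) \ge h_2(n,\{rrb,rby\}) = \Omega(\sqrt{n})$. Your argument instead exploits all three forbidden patterns simultaneously: you notice that $\{rry,bby,rby\}$ comprises precisely the triangle colorings with exactly one yellow edge, deduce that the non-yellow graph is closed under completing cherries and hence is a disjoint union of cliques $C_1,\ldots,C_k$, and then read off $h_2(c)\ge \max(m, n/m)\ge \sqrt{n}$ from the largest clique size $m$ and the transversal of size $k\ge n/m$. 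The paper's approach is more modular and requires no new work; your approach is self-contained and yields a clean structural description of \emph{all} $\mathcal{H}$-avoiding colorings (a blowup of a yellow clique by red/blue cliques), which in particular makes transparent why the construction in Construction~\ref{const6} has exactly the right shape.
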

\begin{proof}
By \Cref{lem2.2} we obtain 
$h_2(n, \mathcal H) \ge h_2(n, \{rrb, rby\}) = \Omega(\sqrt{n})$.
The upper bound follows from Construction \ref{const6}.
\end{proof}

\begin{lemma}\label{lem3.7}
Let $\mathcal H = \{\rrb, \bby, \rby\}$. Then we have $h_2(n,\mathcal H) = \Theta(n^{2/3})$.
\end{lemma}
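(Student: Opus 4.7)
\noindent\textit{Proof plan.} The upper bound is immediate: Construction~\ref{const8} is $\h'$-avoiding for $\h' = \{\rrb, \rry, \bby, \rby\} \supseteq \h$ and achieves $h_2 = O(n^{2/3})$, so $h_2(n, \h) = O(n^{2/3})$.

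For the matching lower bound I will organize everything around a maximum yellow clique. Fix any $\h$-avoiding coloring $c$ of $K_n$, set $h = h_2(c)$, and let $Y$ be a maximum yellow clique with $|Y| = y$. Using only the absence of $\rby$ and $\bby$, I will first show that every $w \in V \setminus Y$ falls into one of two classes: either $w$ has exactly one blue edge to $Y$ (with the other $y-1$ edges to $Y$ all yellow, forced by no $\bby$, and no red edges to $Y$, forced by no $\rby$), or $w$ has no blue edges to $Y$ and at least one red edge (the all-yellow option is killed by the maximality of $Y$). This produces a partition $V = Y \sqcup B_Y \sqcup R_Y$ with $B_Y = \bigsqcup_{u \in Y} N_b(u)$ a \emph{disjoint} union. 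A short triangle check also shows that $Y \cup R_Y$ is a red/yellow $2$-colored clique, yielding $|R_Y| \leq h - y$ and hence $|B_Y| \geq n - h$.

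The main structural step, and the principal obstacle, is to expose a super-block structure around each $u \in Y$ that mimics Construction~\ref{const8}. Setting $G_u = \{u\} \cup N_b(u)$, the absence of $\bby$ makes $G_u$ a red/blue $2$-colored clique (so $|G_u| \leq h$), and inside such a clique the absence of $\rrb$ forces red to be a disjoint union of cliques; let $k_u$ be their number and $\rho_u$ the largest size, so $|G_u| \leq k_u \rho_u$. I then verify the key claim that for $u \neq u' \in Y$ every edge between $G_u$ and $G_{u'}$ is yellow, by inspecting the triangles $uu'w'$ with $w' \in N_b(u')$ (to get $uw' = y$) and then $uww'$ with $w \in N_b(u)$ (to get $ww' = y$), each time using no rainbow and no $\bby$. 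The resulting ``yellow meta-graph'' on the $G_u$'s is the exact analogue of the inter-super-block structure in Construction~\ref{const8}, and it lets me build two global $2$-colored cliques: picking one vertex from each red clique in each $G_u$ gives a blue/yellow clique of size $\sum_u k_u$, so $\sum_u k_u \leq h$; picking the largest red clique in each $G_u$ gives a red/yellow clique of size $\sum_u \rho_u$, so $\sum_u \rho_u \leq h$.

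The final step will be a one-line Cauchy--Schwarz: since $|G_u| \leq k_u \rho_u$,
\[
\sum_{u \in Y} \sqrt{|G_u|} \;\leq\; \sum_{u \in Y} \sqrt{k_u \rho_u} \;\leq\; \sqrt{\Big(\sum_u k_u\Big)\Big(\sum_u \rho_u\Big)} \;\leq\; h,
\]
while $|G_u| \leq h$ gives $\sqrt{|G_u|} \geq |G_u|/\sqrt h$ and hence $\sum_u \sqrt{|G_u|} \geq (|B_Y| + y)/\sqrt h \geq (n - h)/\sqrt h$. Combining these yields $h^{3/2} + h \geq n$, so $h = \Omega(n^{2/3})$. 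I expect the hard part to be the middle step: naive degree bounds alone only give $d_r(v), d_b(v) \leq h$ and hence $h = \Omega(\sqrt n)$, and it is precisely the two linear constraints $\sum_u k_u \leq h$ and $\sum_u \rho_u \leq h$, combined with $|G_u| \leq h$, that push the Cauchy--Schwarz to the tight $n^{2/3}$ exponent matching Construction~\ref{const8}.
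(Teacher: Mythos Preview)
Your proof is correct and takes a genuinely different route from the paper. The paper greedily partitions $V$ into maximal red cliques $A_1,\dots,A_m$, shows that a blue edge between two such cliques forces \emph{all} edges between them to be blue, and then classifies each $A_i$ as ``Type I'' (only blue/yellow to the others) or ``Type II'' (only red/yellow to the others). Type~II cliques together already form a large red/yellow clique; among Type~I cliques, a size-bucketing trick (chopping each $V_i$ into pieces of a fixed size and analyzing the resulting blue/yellow meta-coloring) extracts a $2$-colored clique of order $\Omega(n^{2/3})$.

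Your argument instead pivots on a \emph{maximum yellow clique} $Y$ and reconstructs exactly the three-level hierarchy of Construction~\ref{const8}: the $G_u$'s are the blue/red super-blocks with only yellow in between, and inside each $G_u$ the red components are the innermost blocks. The two global $2$-colored cliques you build (one transversal to the red components, one choosing the largest red component in each $G_u$) give the linear constraints $\sum_u k_u\le h$ and $\sum_u\rho_u\le h$, and the Cauchy--Schwarz step replaces the paper's bucketing with a single clean inequality. All the triangle checks you sketch go through; in particular, for $w\in N_b(u)$, $w'\in N_b(u')$ with $u\ne u'$, the triangle $uww'$ (with $uw$ blue and $uw'$ yellow) indeed forces $ww'$ yellow, which is the crux of the ``yellow meta-graph'' claim. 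Your approach is arguably tighter (it yields $h^{3/2}+h\ge n$ with no hidden constants beyond those of Construction~\ref{const8}) and makes the match with the extremal construction more transparent; the paper's approach, on the other hand, does not need to locate a maximum yellow clique and works entirely with the red-clique decomposition.
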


\begin{proof}

For the lower bound consider an $\mathcal{H}$-avoiding edge-coloring $c$ of $K_n$ on vertex set $V$. Consider a partition of $V$ into sets $A_1, \ldots, A_m$ such that $A_1$ is maximum sized red clique in $c$, and for each $i \ge 2$, $A_i$ is a maximum sized red clique in $c$ contained in $V - (A_1 \cup \cdots \cup A_{i - 1})$. Note that $|A_i| = 1$ is allowed here. Moreover, for each $i \neq j$ there is at least one non-red edge between $A_i$ and $A_j$. We shall show that either there is a $2$-colored clique of a desired size or there are at least $n/2$ vertices such that the coloring restricted to these vertices is formed by pairwise vertex-disjoint red cliques such that between any two such cliques all edges are blue or all edges are yellow.\\

First, assume there is a blue edge $uv$ with $ u \in A_i$ and $ v \in A_j$, for some $i \neq j$. Then every edge between $A_i$ and $A_j$ incident to $u$ or $v$ must be blue, since otherwise a rainbow or $rrb$ triangle is formed. Assume there is an edge $wz$ with $w \in A_i$ and $z \in A_j$ that is not incident to $uv$. If $wz$ is red, then $uz$ cannot be colored without forming a forbidden pattern. Similarly, $wz$ cannot be yellow. It follows that if there is a blue edge between any $A_i$ and $A_j$, then all edges between $A_i$ and $A_j$ must be blue. \\

We claim that for each $A_i$, either $A_i$ sends red/yellow edges to every other $A_j$, or $A_i$ sends only blue/yellow edges to every other $A_j$. Suppose otherwise, so that there is $i, j, k \in [m]$ such that all edges between $A_i$ and $A_j$ are blue, and all edges between $A_i$ and $A_k$ are red/yellow and there is at least one red edge. We assume first that $k < i$. In this case, $A_k$ was chosen as a largest clique before $A_i$. There must be at least one red and at least one yellow edge between $A_k$ and $A_i$, so there exist vertices $u, v, w$ such that $v \in A_i$, $u, w \in A_k$, and $c(uv) = r$ and $c(vw) = y$. Pick a vertex $z \in A_j$. Then $zv$ is blue, so we cannot use blue between $A_j$ and $A_k$: otherwise $vzw$ is a $bby$ triangle. Similarly, we cannot color $zw$ red, because then $vzw$ is a rainbow triangle. It follows that $zw$ is yellow. But then we cannot color $zu$ without forming an $rrb$ or rainbow triangle. The argument is similar if $i < k$. Indeed, in this case we find vertices $u, v, w$ such that $v \in A_k$, $u, w \in A_i$ and $c(vw) = r$ and $c(vu) = y$. Pick a vertex $z \in A_j$ and note that both $uz$ and $uw$ are blue. In this case, note that if $vz$ is red, then $vzw$ is an $rrb$ triangle. If it's blue, then $uvz$ is a $bby$ triangle, and if it is yellow, then $vzw$ is a rainbow triangle. This is a contradiction. Therefore, we say that $A_i$ is of \emph{Type I} if it sends only blue/yellow edges to all other $A_j$'s. Otherwise, we say that $A_i$ is of \emph{Type II}.

Given the above, we can now break the proof into two cases:\\

\noindent 
Case 1: There are  at least $n/2$ vertices in cliques of Type II. In this case, we have a red/yellow clique of size at least $n/2$. \\

Case 2: At least $n/2$ vertices are in cliques of Type I.\\
Relabel and denote by $V_1,\ldots, V_k$ the red cliques of Type I. Recall that all edges between $V_i$ and $V_j$, $i\neq j$  must be blue or all of them must be yellow. Then we denote by $c(V_i, V_j)$ the color of the edges between $V_i$ and $V_j$.

Note that $k<n^{2/3}$, otherwise we have a blue/yellow clique of that size. \\
Let $\mathcal I \subseteq [k]$ be the subset of indices such that $|V_i| \ge \frac14n^{1/3}$ iff $i \in \mathcal I$.
Split each $V_i$, $i \in \mathcal I$ into disjoint sets $ B_{i,j}$ and  $C_i$, $j=1,\ldots,m_i$, with $|B_{i,j}| = \frac14n^{1/3}$ and $|C_{i}| < \frac 14 n^{1/3}$. Then we have 
$$ \sum\limits_{i\in \mathcal I}|C_i| + \sum\limits_{i \not\in \mathcal I} |V_i| < \frac 14 n^{1/3} n^{2/3} = \frac n4. $$
Thus, there are $s > \frac{n/2-n/4}{ n^{1/3}/4} = n^{2/3}$ sets $B_{i,j}$. Consider a blue/yellow edge-coloring $c'$ of $K_s$ with vertex set $\{B_{i,j}: i\in \mathcal I, j\in [m_i]\}$ where $$c'(B_{i, j}, B_{i', j'}) = \begin{cases}
c(V_{i}, V_{i'}), & i \neq i' \\
blue, & i = i'.
\end{cases} $$ 
Then in $c'$, there is no $bby$ triangle, so in $c'$ there is a monochromatic set of size at least $\sqrt{s}$.
Indeed,  the blue graph in $c'$ is a pairwise vertex-disjoint union of cliques, so either one of these cliques has at least $\sqrt{s}$ vertices, or there are 
at least $\sqrt{s}$ such cliques and thus there is a yellow clique on $\sqrt{s}$ vertices. 
Such a  blue clique in $c'$ corresponds to a blue/red clique in $c$,  such a yellow clique in $c'$ corresponds to a red/yellow clique in $c$  of size $\sqrt s |B_{i,j}| > \sqrt{n^{2/3}}n^{1/3}/4 = n^{2/3}/4$ in $c$. \\

The upper bound follows from Construction \ref{const8}. \end{proof}

\subsection{$\h$ contains a monochromatic but no rainbow pattern}\label{size3mono-norb}

\begin{lemma}
Let $\h = \{\rrr, \bbb, \yyy\}$. Then there is no $\h$-avoiding coloring for $n\ge 17$.
\end{lemma}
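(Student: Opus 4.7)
The statement is exactly the classical Ramsey-theoretic bound $R(3,3,3) \le 17$: an $\mathcal H$-avoiding coloring with $\mathcal H = \{\rrr, \bbb, \yyy\}$ is precisely a $3$-edge-coloring of $K_n$ containing no monochromatic triangle, so nonexistence for $n \ge 17$ is the content of the Greenwood--Gleason theorem cited in Section~\ref{Preliminaries}. My plan is therefore to invoke this result directly, but also to supply the short self-contained pigeonhole argument, since it is only a few lines.

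First I would reduce to $n = 17$: if an $\mathcal H$-avoiding coloring existed for some $n \ge 17$, then restricting to any $17$ vertices would give such a coloring of $K_{17}$, so it suffices to derive a contradiction for $n = 17$.

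Next, I would fix an arbitrary vertex $v$ of $K_{17}$ and apply pigeonhole to the $16$ edges at $v$: since $16 = 3 \cdot 5 + 1$, one color class contains at least $6$ of them. By symmetry assume $v$ has at least $6$ red neighbors $u_1, \dots, u_6$. Within $\{u_1,\dots,u_6\}$ no edge may be red, for otherwise it would form a red triangle with $v$ and contradict the fact that $\rrr \in \mathcal H$. Hence the edges among $\{u_1,\dots,u_6\}$ are colored only with blue and yellow.

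Finally, I would invoke the elementary bound $R(3,3) = 6$ to conclude that this blue/yellow coloring of $K_6$ contains a monochromatic triangle, which is either a $\bbb$ or a $\yyy$ pattern, contradicting the $\mathcal H$-avoiding assumption. The main (and only) step requiring any care is the symmetry argument that reduces all three cases of the pigeonhole outcome to the red case; no obstacle is expected since the result is already classical and recorded as \cite{GG}.
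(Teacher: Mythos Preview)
Your proposal is correct and follows the same approach as the paper: both reduce the lemma to the classical fact $R(3,3,3)\le 17$ of Greenwood and Gleason, which the paper simply cites. You additionally supply the standard pigeonhole proof of this bound, which is fine but not required.
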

\begin{proof}
We know that the Ramsey number $R(3,3,3)=17$, so there is no edge-3-coloring of $K_n$ without a monochoromatic $K_3$ for $n\ge 17$. 
\end{proof}

\begin{lemma}\label{lem3.rrr.bbb.rrb}
 Let $\mathcal{H} = \{\rrr, \bbb, \rrb\}$. Then $h_2(n, \mathcal{H}) = O(\sqrt{n} \log^{3/2} n)$ and $h_2(n, \mathcal{H}) = \Omega(\sqrt{n\log n})$.
\end{lemma}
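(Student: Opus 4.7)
The plan is to derive both bounds as essentially immediate consequences of earlier results in the paper, using the monotonicity of $h_2(n,\cdot)$ with respect to the forbidden family (noted in the introduction: if $\mathcal{H} \subseteq \mathcal{H}'$, then $h_2(n,\mathcal{H}) \le h_2(n,\mathcal{H}')$).

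For the lower bound $h_2(n,\mathcal{H}) = \Omega(\sqrt{n\log n})$, I would observe that $\{\rrr\} \subseteq \mathcal{H}$, so every $\mathcal{H}$-avoiding coloring is in particular $\{\rrr\}$-avoiding, and thus contains no red triangle. By \Cref{lem1.rrr}, any coloring with no red triangle contains a two-colored clique of size $\Omega(\sqrt{n\log n})$ (this was the application of the $R(3,t)$ bound from Ajtai--Koml\'os--Szemer\'edi). Hence $h_2(n,\mathcal{H}) \ge h_2(n,\{\rrr\}) = \Omega(\sqrt{n\log n})$.

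For the upper bound $h_2(n,\mathcal{H}) = O(\sqrt{n}\log^{3/2} n)$, I would invoke Construction \ref{const:r-b-only}, which produces a $3$-edge-coloring $c$ of $K_n$ avoiding all patterns in $\{\rrr,\bbb,\bbr,\rrb\}$ and satisfying $h_2(c) = O(\sqrt{n}\log^{3/2} n)$. Since $\mathcal{H} = \{\rrr,\bbb,\rrb\} \subseteq \{\rrr,\bbb,\bbr,\rrb\}$, the coloring $c$ is in particular $\mathcal{H}$-avoiding, hence it witnesses $h_2(n,\mathcal{H}) \le h_2(c) = O(\sqrt{n}\log^{3/2} n)$.

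There is no real obstacle here beyond checking the set containments carefully: the lemma essentially records that forbidding the extra pattern $\bbb$ on top of $\{\rrr,\rrb\}$ (studied in \Cref{lem2.4}) neither improves the lower bound nor invalidates the probabilistic construction used for the upper bound, because the Tur\'an/triangle-free random coloring in Construction \ref{const:r-b-only} already avoids monochromatic red and blue triangles together with both orientations of $rrb/bbr$. Consequently the same gap between $\sqrt{n\log n}$ and $\sqrt{n}\log^{3/2} n$ persists, matching the bounds claimed.
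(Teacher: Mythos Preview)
Your proposal is correct and matches the paper's proof essentially verbatim: the lower bound is inherited from $h_2(n,\{rrr\})$ via \Cref{lem1.rrr}, and the upper bound is exactly Construction~\ref{const:r-b-only}. Your added commentary about the relationship to \Cref{lem2.4} is accurate but not needed for the argument.
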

\begin{proof}
 The lower bound holds since $h_2(n, \mathcal{H}) \ge h_2(n, \{rrr\}) = \Omega(\sqrt{n \log n}).$
The upper bound follows from Construction \ref{const:r-b-only}.
\end{proof}

\begin{lemma}\label{lem3.rrr.bbb.rry}
Let $\mathcal{H} = \{ \rrr, \bbb, \rry\}$. Then $h_2(n, \mathcal{H}) = 2\floor{\frac{n}5} + \epsilon$, where $\epsilon = 0$ if $n \equiv 0 \pmod 5$, $\epsilon=1$ if $n \equiv 1 \pmod 5$, and $\epsilon= 2$ otherwise.
\end{lemma}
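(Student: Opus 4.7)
The plan is to prove the upper bound via Construction~\ref{const:2n/5_2} (which forbids the larger family $\{\rrr,\bbb,\rry,\bby\}$, hence also forbids $\mathcal{H}$), and to obtain the matching lower bound by a careful structural analysis of the red graph in any $\mathcal{H}$-avoiding coloring $c$. The central observation is that the red graph $G_r$ is very constrained: for any vertex $v$, the red neighborhood $N_r(v)$ contains no red edge (by $\rrr$-avoidance) and no yellow edge (by $\rry$-avoidance), so it induces a blue clique, whose size is at most $2$ by $\bbb$-avoidance. Thus $\Delta(G_r) \le 2$ and $G_r$ is triangle-free, so its components are isolated vertices, isolated edges, paths $P_k$ ($k\ge 3$), or cycles $C_k$ ($k\ge 4$). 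In fact $C_6$ is excluded: in a red $C_6=v_1\dots v_6$ the edges $v_iv_{i+2}$ are all forced blue (as the unique non-red, non-yellow color on the triangle through $v_{i+1}$'s two red neighbors), producing a blue triangle $v_1v_3v_5$.

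Next, I would exploit that for every component $C$ of $G_r$ the ratio $\alpha(C)/|C| \ge 2/5$, with equality iff $C=C_5$ (this is a routine check of the list above, noting also that the forced blue diagonals inside a red $C_5$ turn those $5$ vertices into a red/blue $K_5$). Summing over components gives $\alpha(G_r) \ge \lceil 2n/5\rceil$, and since an independent set in $G_r$ is exactly a blue/yellow clique in $c$, we obtain $h_2(c) \ge \lceil 2n/5\rceil$. For $n\not\equiv 2\pmod 5$ this already equals the target $2\lfloor n/5\rfloor + \epsilon(n)$, so we are done in those residues.

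The remaining case is $n=5k+2$, where the target is $2k+2$ but $\lceil 2n/5\rceil = 2k+1$. If $\alpha(G_r)\ge 2k+2$ we are done, so assume $\alpha(G_r)=2k+1$. A component-wise accounting of the excess $\alpha(C)-\tfrac{2}{5}|C|$ forces $G_r$ to be of one of two shapes: either $k$ copies of $C_5$ plus one $K_2$, or $(k-1)$ copies of $C_5$ plus one $C_7$ (these are the only component combinations whose excesses sum to exactly~$2/5$). In each of these extremal configurations I would locate a 2-colored clique of size $2k+2$ by a different route: take a red-adjacent pair from each $C_5$ (a blue-independent pair inside the red/blue $K_5$), together with both vertices of the $K_2$ (or with a suitably chosen red-adjacent triple inside $C_7$ using that its diagonals are forced blue). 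This gives $2k+2$ vertices whose red edges form a matching; if all remaining cross-edges among them are yellow, we have the desired red/yellow clique. If not, some cross edge is blue, and I would use the blue-triangle freeness together with the restriction that any vertex's blue neighbors inside a $C_5$ component form a blue-independent set (equivalently a red-adjacent pair of that $C_5$, of size at most $2$) to either re-route to a red/yellow clique of the same size using a different choice of red-adjacent pairs, or to upgrade the extra blue structure into a red/blue $K_5$ inside one component, which already suffices when $k\le 1$.

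The main obstacle is the last step: verifying that regardless of how the cross edges are split between blue and yellow, one of the $5^k$ choices of red-adjacent pairs from the $C_5$ components (or the analogous choices in the $C_7$ case) yields a valid red/yellow $K_{2k+2}$, or else the blue cross-structure alone already produces a 2-colored clique of size $2k+2$. I expect this to be handled by showing that each vertex outside a given $C_5$ has at most two blue neighbors in that $C_5$, constrained to a red-adjacent pair, which forces the blue cross-graph to be sparse enough that a greedy or pigeonhole choice of red-adjacent pairs avoids all blue cross-edges between selected vertices.
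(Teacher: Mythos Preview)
Your overall strategy matches the paper's: the upper bound via Construction~\ref{const:2n/5_2}, and the lower bound via the observation that the red neighborhood of any vertex is a blue clique of size at most $2$, so $\Delta(G_r)\le 2$ and $h_2(c)\ge \alpha(G_r)$. You go further than the paper by correctly noticing that $\alpha(G_r)$ alone yields only $2k+1$ when $n=5k+2$, whereas the claimed value is $2k+2$; the paper's short argument glosses over exactly this point.

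Unfortunately your attempt to close the gap cannot succeed, because the lemma as stated is \emph{false} for every $n\equiv 2\pmod 5$ with $n\ge 7$. Here is an $\mathcal H$-avoiding coloring of $K_{5k+2}$ with $h_2=2k+1$. Take $k-1$ vertex-disjoint copies of the red/blue $K_5$ (the unique $2$-coloring with no monochromatic triangle) together with one copy of $K_7$ on vertices $b_0,\dots,b_6$ in which $b_ib_j$ is red, blue, or yellow according as $i-j\equiv\pm 1,\pm 2,\pm 3\pmod 7$; color all edges between distinct components yellow. Inside each component the red neighborhood of any vertex is a single blue edge, so there is no $rrr$ and no $rry$; the blue graph is a disjoint union of a $C_7$ and $C_5$'s, so there is no $bbb$; every cross-component triangle has at least two yellow edges. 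Now $S_{by}=\alpha(G_r)=2(k-1)+3=2k+1$ and $S_{ry}=\alpha(G_b)=2(k-1)+3=2k+1$, while a yellow-free clique lies entirely inside one component, giving $S_{rb}=5\le 2k+1$ when $k\ge 2$ and $S_{rb}=3$ when $k=1$. Hence $h_2(n,\mathcal H)=2k+1$, not $2k+2$.

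This is exactly why your ``main obstacle'' is insurmountable in the $(k-1)C_5+C_7$ case: once the red graph on seven vertices is a $C_7$, the distance-$2$ chords are forced blue, and then the distance-$3$ chords are forced yellow (otherwise two forced blue chords create a blue triangle), so the coloring on those seven vertices is completely determined and contains no $2$-colored $K_4$. Two small side remarks: the component excesses actually sum to $1/5$, not $2/5$; and a red-adjacent \emph{triple} from the $C_7$ together with $k-1$ pairs produces only $2k+1$ vertices, not $2k+2$.
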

\begin{proof}
To see the lower bound,  consider an $\mathcal{H}$-avoiding coloring of $E(K_n)$. Observe that the red degree of every vertex is at most $2$. Indeed, since there are no $rry$ or $rrr$ triangles, the entire red neighborhood of a given vertex must induce a blue clique. But as there is no blue triangle, each red neighborhood has at most $2$ vertices.  Thus each red component is either a path or a cycle of length at least $4$. 
Among all such red graphs, the one with the smallest independent set is a union of pairwise vertex disjoint $C_5$'s, and if $n$ is not divisible by $5$, a component on at most $4$ vertices. This matches exactly the Construction \ref{const:2n/5_2} and 
gives a blue/yellow clique of size at least $2\floor{n/5} + \epsilon$.\\

The upper bound follows from Construction \ref{const:2n/5_2}.
\end{proof}

\begin{lemma}\label{lem2}
If a family $\h$ contains three patterns with different majority colors, then $h_2(n, \h) \ge \ceil{\frac{n-1}3}$.
\end{lemma}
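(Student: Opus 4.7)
The plan is to pick any single vertex $v$ of $K_n$ and analyze its three monochromatic neighborhoods $N_r(v), N_b(v), N_y(v)$. Since the color degrees at $v$ satisfy $d_r(v) + d_b(v) + d_y(v) = n-1$, a pigeonhole argument yields some color $x \in \{r,b,y\}$ with $|N_x(v)| \geq \lceil (n-1)/3 \rceil$. The hypothesis that $\h$ contains three patterns with three distinct majority colors is precisely what ensures that, whichever color $x$ turns out to be dominant at $v$, the family $\h$ contains a pattern $P = xxz$ whose majority color matches $x$; here $z \in \{r,b,y\}$ denotes the remaining (minority) color of $P$.

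Fixing such a $P$, the next step is to observe that no edge inside $N_x(v)$ can be colored $z$: any such edge $uw$ would, together with the two $x$-colored edges $vu$ and $vw$, realize the forbidden triangle pattern $P$. Consequently $N_x(v)$ spans a clique whose edges use only the two colors of $\{r,b,y\}\setminus\{z\}$, which is a two-colored clique on at least $\lceil (n-1)/3 \rceil$ vertices. Since this argument applies to every $\h$-avoiding coloring $c$, we obtain $h_2(n,\h) \geq \lceil (n-1)/3 \rceil$, as desired.

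There is no real obstacle here; the entire argument is a single pigeonhole step followed by reading off the forbidden pattern. The only subtle point is recognizing that the ``three different majority colors'' hypothesis is tailored exactly so that the case analysis after pigeonhole closes uniformly: without all three majority colors represented in $\h$, one could not rule out the dominant color at $v$ being the ``unprotected'' one, and the bound would fail.
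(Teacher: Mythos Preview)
Your proof is correct and follows essentially the same approach as the paper: fix a vertex $v$, observe that the neighborhood $N_x(v)$ must avoid one color because of the forbidden pattern with majority color $x$, and apply pigeonhole. The only cosmetic difference is that the paper first notes that \emph{each} of $N_r(v), N_b(v), N_y(v)$ is two-colored and then picks the largest, whereas you pick the largest first and then argue it is two-colored; also be aware that the pattern with majority color $x$ could be monochromatic $xxx$, in which case your $z$ equals $x$ rather than being a genuine ``minority'' color, but the argument goes through unchanged.
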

\begin{proof}
Consider an	$\h$-avoiding coloring $c$ and an arbitrary vertex $v$. Let $N_r, N_b$, and $N_y$ be the red, blue, and yellow neighborhoods of $v$, respectively. Then we see that each of these sets induces a $2$-colored clique.  Since at least one of the sets $N_r, N_b, N_y$ has size at least $\ceil{\frac{n-1}3}$, the result follows. 
\end{proof}

\begin{lemma}\label{lem3.rrr.bbb.yyr}
Let $\mathcal{H} = \{\rrr, \bbb, \yyr\}$. Then $\ceil{\frac {n-1}3} \le h_2(n, \mathcal{H}) \le 2\ceil{\frac{n}5}$.
\end{lemma}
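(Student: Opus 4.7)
The plan is that both bounds follow immediately from results already established earlier in the excerpt, so no new machinery is needed. I will simply identify the right ingredients and invoke them.

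For the lower bound $h_2(n, \mathcal{H}) \ge \ceil{(n-1)/3}$, I first note that the three patterns in $\mathcal{H} = \{\rrr, \bbb, \yyr\}$ have three distinct majority colors: the pattern $\rrr$ has red as its majority color, $\bbb$ has blue as its majority color, and $\yyr$ has yellow as its majority color (recall that for a non-rainbow pattern the majority color is the one used on more than one edge). Thus the hypothesis of Lemma \ref{lem2} is satisfied, and that lemma directly yields the desired lower bound. The key content of Lemma \ref{lem2} is that, for any $\h$-avoiding coloring $c$, the neighborhoods $N_r(v), N_b(v), N_y(v)$ of an arbitrary vertex $v$ each induce a two-colored clique (since a monochromatic triangle in one of these neighborhoods would force a forbidden pattern of the appropriate majority color at $v$), and pigeonholing shows one of these neighborhoods has at least $\ceil{(n-1)/3}$ vertices.

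For the upper bound $h_2(n, \mathcal{H}) \le 2\ceil{n/5}$, I use Construction \ref{const:2n/5}, which builds a 3-edge-coloring avoiding all of $\{\rrr, \bbb, \yyr, \yyb\}$. Since $\mathcal{H} \subseteq \{\rrr, \bbb, \yyr, \yyb\}$, that coloring is in particular $\mathcal{H}$-avoiding. The claim attached to Construction \ref{const:2n/5} then gives the stated bound $2\ceil{n/5}$.

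Since both bounds are direct applications, there is no real obstacle; the proof just amounts to citing Lemma \ref{lem2} and Construction \ref{const:2n/5}. If anything, the only thing worth double-checking is the identification of majority colors for monochromatic patterns like $\rrr$ and $\bbb$, but the definition in the preliminaries (majority color is the one used on more than one edge) clearly covers monochromatic triangles, so the hypothesis of Lemma \ref{lem2} is satisfied without subtlety.
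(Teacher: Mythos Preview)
Your proof is correct and follows exactly the paper's approach: the lower bound cites Lemma~\ref{lem2} (after checking the majority-color hypothesis), and the upper bound cites Construction~\ref{const:2n/5}. One small slip in your parenthetical explanation of Lemma~\ref{lem2}: the mechanism is not that a monochromatic triangle inside a neighborhood would be forbidden, but rather that a single \emph{edge} of the appropriate color inside $N_r(v)$, $N_b(v)$, or $N_y(v)$ already forms a forbidden triangle together with $v$; this is why each neighborhood is two-colored. This does not affect the validity of your proof, since you are invoking the lemma rather than reproving it.
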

\begin{proof}
The lower bound follows from \Cref{lem2}. 

The upper bound follows from Construction \ref{const:2n/5}.
\end{proof}

\begin{lemma}\label{lem3.rrr.rrb.rry}
Let $\h = \{\rrr,\rrb,\rry \}$. Then we have $h_2(n, \h) = \ceil{\frac n2}$.
\end{lemma}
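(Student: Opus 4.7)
The plan is to observe that forbidding all three patterns $rrr$, $rrb$, and $rry$ simultaneously is equivalent to forbidding any triangle containing two red edges: given two adjacent red edges, the third edge of the induced triangle must be red, blue, or yellow, and in each case one of the three forbidden patterns appears. Consequently, in any $\h$-avoiding coloring $c$ of $K_n$ no two red edges share a vertex, so the red graph is a matching.

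For the lower bound, I would let $m \le \floor{n/2}$ denote the size of the red matching $M$ and form a vertex set $S$ by choosing one endpoint of each edge of $M$ together with all $n - 2m$ unmatched vertices. Then $|S| = n - m \ge \ceil{n/2}$, and by construction $S$ spans no red edge, so it induces a blue/yellow clique of the required size. This gives $h_2(n, \h) \ge \ceil{n/2}$.

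For the matching upper bound, I would simply invoke Construction~\ref{const2}, whose forbidden family $\{rrr, yyy, rrb, rry, bbr, bby, yyr\}$ contains $\h$. Hence that explicit coloring is $\h$-avoiding and its largest two-colored clique already has size at most $\ceil{n/2}$.

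There is no substantive obstacle in this argument: the only real step is the initial structural observation that the three forbidden patterns together force the red graph to be a matching, after which both bounds are essentially immediate (the lower bound by picking a red-independent transversal, the upper bound by citing an earlier construction).
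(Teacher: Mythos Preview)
Your proposal is correct and matches the paper's proof essentially verbatim: the paper also observes that forbidding $rrr$, $rrb$, $rry$ forces the red graph to be a matching, deduces a blue/yellow clique of size $\ceil{n/2}$, and cites Construction~\ref{const2} for the upper bound. Your explicit description of the transversal set $S$ just spells out what the paper leaves implicit.
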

\begin{proof}
For the lower bound, let $c$ be an $\h$-avoiding coloring of $K_n$. Then there are no two adjacent red edges, so the red graph forms a matching, i.e., there is a blue/yellow clique of size $\ceil{\frac n2}$.   

The upper bound follows from Construction \ref{const2}.
\end{proof}

\begin{lemma}\label{lem3.rrr.rrb.bbr}
Let $\h = \{\rrr,\rrb,\bbr\}$. Then  $h_2(n, \h) = \Omega(\sqrt{n \log n})$ and 
$h_2(n, \h) = O(\sqrt{n} \log^{3/2} n)$. 
\end{lemma}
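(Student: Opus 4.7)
The plan is that both bounds follow almost immediately from results already established in the excerpt, so there is essentially no new work to do; the lemma is recorded for completeness because the family $\{\rrr,\rrb,\bbr\}$ is one of the canonical three-pattern families listed in Proposition~\ref{prop:3}.

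For the lower bound, I would simply invoke the monotonicity principle stated in the introduction: if $\mathcal H \subseteq \mathcal H'$ then $h_2(n,\mathcal H) \le h_2(n,\mathcal H')$, because every $\mathcal H'$-avoiding coloring is also $\mathcal H$-avoiding. Since $\{\rrr\} \subseteq \{\rrr,\rrb,\bbr\}$, we get
\[
h_2(n,\{\rrr,\rrb,\bbr\}) \;\geq\; h_2(n,\{\rrr\}) \;=\; \Omega(\sqrt{n\log n}),
\]
where the last equality is \Cref{lem1.rrr} (proved via the bound $R(3,t) = O(t^2/\log t)$ from \Cref{thm:AKS}).

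For the upper bound, I would use Construction~\ref{const:r-b-only}, which produces, with positive probability, a $3$-edge-coloring of $K_n$ that avoids all four patterns $\rrr, \bbb, \bbr, \rrb$ and has $h_2(c) = O(\sqrt n \log^{3/2} n)$. Since $\{\rrr,\rrb,\bbr\} \subseteq \{\rrr,\bbb,\rrb,\bbr\}$, that same coloring is certainly $\mathcal H$-avoiding for $\mathcal H = \{\rrr,\rrb,\bbr\}$, and the claimed upper bound follows.

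There is no real obstacle here; the only thing to verify is the containment of forbidden families (trivial by inspection), and that the construction cited gives exactly the quantitative bound being claimed. In particular, no new probabilistic argument or new structural analysis is needed for this lemma — the work has already been done in \Cref{lem1.rrr} and Construction~\ref{const:r-b-only}.
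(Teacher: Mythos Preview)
Your proposal is correct and matches the paper's proof exactly: the lower bound is obtained from \Cref{lem1.rrr} via the monotonicity $\{rrr\}\subseteq\mathcal H$, and the upper bound is Construction~\ref{const:r-b-only}.
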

\begin{proof}
The lower bound follows from \Cref{lem1.rrr}: $h_2(n, \h) \ge h_2(n, \{rrr\}) = \Omega(\sqrt{n \log n})$.
The upper bound follows from Construction \ref{const:r-b-only}.
\end{proof}

\begin{lemma}\label{lem3.rrr.rrb.bby}
Let $\h = \{\rrr,\rrb,\bby\}$. Then $h_2(n, \h) = \Omega(\sqrt{n \log n})$ and   $h_2(n, \h) = O(n^{2/3}\sqrt{\log n}  )$. 
\end{lemma}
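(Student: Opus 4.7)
The plan is to derive both bounds from results already established in the excerpt, so no new argument is really required.

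For the lower bound, since $\{rrr\} \subseteq \h$, any $\h$-avoiding coloring of $K_n$ is also $\{rrr\}$-avoiding. Therefore $h_2(n,\h) \ge h_2(n,\{rrr\})$, and Lemma \ref{lem1.rrr} gives $h_2(n,\{rrr\}) = \Omega(\sqrt{n\log n})$. Concretely, an $\h$-avoiding coloring has a triangle-free red graph, and by the Ajtai--Koml\'os--Szemer\'edi bound (\Cref{thm:AKS}/\Cref{cor:AKS}) the blue/yellow graph on $V(K_n)$ contains a clique of order $\Omega(\sqrt{n\log n})$.

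For the upper bound, I would invoke Construction \ref{const:2/3_no_k3}, which produces a $3$-edge-coloring of $K_n$ avoiding \emph{all} of the patterns $\{\rrr,\rrb,\bbr,\bby\}$ with $h_2(c) = O(n^{2/3}\sqrt{\log n})$. Since $\h = \{\rrr,\rrb,\bby\}$ is a subfamily of $\{\rrr,\rrb,\bbr,\bby\}$, any coloring avoiding the larger family automatically avoids $\h$. Thus the same coloring witnesses $h_2(n,\h) = O(n^{2/3}\sqrt{\log n})$.

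The main ``obstacle'' here is not really an obstacle: both bounds are inherited via set inclusions among families of forbidden patterns (one inclusion going each way). The only thing worth double-checking is that Construction \ref{const:2/3_no_k3} genuinely avoids each pattern in $\h$, which it does since its forbidden set contains $\h$. The gap between $\Omega(\sqrt{n\log n})$ and $O(n^{2/3}\sqrt{\log n})$ is left open, as in several other entries of Table \ref{t3}; closing it would require either a construction tailored to forbid only the three patterns in $\h$ (potentially exploiting the freedom that $bbr$ triangles are now allowed) or a stronger structural analysis of $\h$-avoiding colorings, but neither is attempted in this lemma.
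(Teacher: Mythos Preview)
Your proof is correct and matches the paper's own argument essentially verbatim: the lower bound is inherited from $h_2(n,\{rrr\})$ via \Cref{lem1.rrr}, and the upper bound comes from Construction~\ref{const:2/3_no_k3}, which avoids a superfamily of $\h$. Your additional remarks about the gap and the monotonicity directions are accurate and agree with what the paper leaves open.
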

\begin{proof}
The lower bound follows from \Cref{lem1.rrr}: $h_2(n, \h) \ge h_2(n, \{rrr\}) = \Omega(\sqrt{n \log n})$. The upper bound follows from Construction \ref{const:2/3_no_k3}.
\end{proof}

Although there is a gap between the lower and the upper bound in \Cref{lem3.rrr.rrb.bby}, we are able to prove the following lemma concerning the structure of colorings with no patterns in $\{\rrr,\rrb,\bby\}$.

\begin{lemma}\label{lem:rrr.rrb.bby-structure}
Let $\h = \{\rrr,\rrb,\bby\}$ and let $c$ be an $\h$-avoiding coloring of $K_n$. Then either $h_2(c) = \Omega(n^{2/3}\log^{1/3} n)$ or at least $n/4$ vertices span pairwise vertex-disjoint blue cliques with only red/yellow edges between them, with the red graph forming a matching  between any two distinct blue cliques. 
\end{lemma}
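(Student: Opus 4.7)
\medskip

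\noindent The plan is to prove the structural conclusion directly via a greedy peeling argument, obtaining the desired partition on at least $n/2 \ge n/4$ vertices, so the $h_2(c) = \Omega(n^{2/3}\log^{1/3} n)$ alternative of the lemma is not actually needed. I will iteratively extract a maximum blue clique from the current vertex set and argue that each extraction ``costs'' at most twice as many vertices as it contributes to the partition.

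\medskip

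\noindent The key technical step is the following claim: if $B$ is a maximum blue clique in a subset $U \subseteq V$, then the number of $v \in U \setminus B$ with at least one blue neighbor in $B$ is at most $|B|$. To prove it, first suppose $v$ has at least two blue neighbors $b_1, b_2 \in B$. Any non-blue edge $vb$ with $b \in B$ cannot be yellow, since then $v b_1 b$ would be a $bby$ triangle, so $vb$ is red; and two red edges $vb, vb'$ would combine with the blue edge $bb'$ of $B$ into an $rrb$ triangle, so at most one red edge exists; maximality of $B$ forces exactly one, so $v$ is blue to $|B|-1$ of $B$ and red to a unique $r_v \in B$. Now suppose two such $v, v'$ share $r_v = r_{v'} = r$: the triangle $vv'r$ has two red sides and forces $vv'$ to be yellow (to avoid both $rrr$ and $rrb$), while any triangle $vv'b$ with $b \in B \setminus \{r\}$ has two blue sides and forces $vv'$ to avoid yellow (to escape $bby$), a contradiction. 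Hence at most one such $v$ exists per $r \in B$, giving at most $|B|$ vertices of this kind. A parallel case analysis for vertices with exactly one blue neighbor in $B$ (only possible when $|B| \le 2$) bounds their count by $|B|$ as well.

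\medskip

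\noindent With the claim in hand, I run the greedy: set $V_0 = V$, and at step $i$ pick a maximum blue clique $B_i \subseteq V_i$, add it to the partition, and let $V_{i+1} = V_i \setminus (B_i \cup \{v \in V_i : v \text{ has a blue edge to } B_i\})$. By the claim, each step removes at most $2|B_i|$ vertices from $V_i$ while contributing $|B_i|$ to $V' := \bigsqcup_i B_i$, so $|V'| \ge n/2 \ge n/4$. To verify the structure on $V'$: no vertex in $V_{i+1}$ has a blue edge to $B_i$, so the blue graph on $V'$ is exactly the disjoint union of the cliques $B_i$, and all edges between distinct $B_i, B_j$ are red or yellow; moreover, two red edges $vb, vb'$ with $v \in B_i$ and $b, b' \in B_j$ together with the blue edge $bb'$ of $B_j$ would form an $rrb$ triangle, so the red graph between any two parts is a matching. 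The main obstacle is the no-valid-color argument inside the claim, which simultaneously invokes all three forbidden patterns and is what drives the bound on the ``Case $3$'' vertices.
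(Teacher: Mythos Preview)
Your argument is correct and in fact stronger than what the paper proves: you obtain the structural conclusion on at least $n/2$ vertices unconditionally, so the alternative $h_2(c)=\Omega(n^{2/3}\log^{1/3}n)$ is never invoked. The route is genuinely different. The paper greedily peels off maximal \emph{red/blue} cliques, then splits into two cases according to whether most vertices lie in cliques of size at least $4$; in the large-clique case it argues directly that no blue edge can run between two such cliques and passes to blue subcliques, while in the small-clique case it abandons the structural conclusion and instead uses the Ramsey bound $R(C_5,K_k)=O(k^{3/2}/\sqrt{\log k})$ to produce a large red/yellow clique directly. Your approach instead peels maximum \emph{blue} cliques and exploits all three forbidden patterns simultaneously to show that each peel discards at most $|B|$ extra vertices, which bypasses the size dichotomy and the Ramsey input entirely. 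The paper's approach has the advantage that its small-clique branch already delivers the intended application (a large two-colored clique) without further work; your approach is more elementary, avoids external Ramsey results, and yields the cleaner constant $n/2$. One minor point: your parenthetical ``only possible when $|B|\le 2$'' for the exactly-one-blue-neighbor case deserves a line of justification (if $|B|\ge 3$ and $vb_1$ is the unique blue edge, then any $vb_2$ with $b_2\ne b_1$ must be red to avoid $bby$ via $vb_1b_2$, forcing at least two red edges into $B$ and hence an $rrb$ triangle), but this is straightforward.
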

\begin{proof}
See appendix.
\end{proof}

Recall that $g(n)$ is a smallest possible independence number of an $n$-vertex graph that has no cycles of length $3$ and no cycles of length $5$, i.e., that has an odd girth at least $7$. 

\begin{lemma}\label{lem:g(n)}
We have that  $g(n)\leq  h_2(n, \{\rrr, \rrb, \yyr\})\leq 2 g(n)$.
\end{lemma}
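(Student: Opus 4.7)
The plan is to follow \Cref{lem2.f1}, with $g(n)$ playing the role of $f(n)$; the extra forbidden pattern $yyr$ forces the red graph to have odd girth at least $7$ rather than merely being triangle-free.

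For the lower bound I would take an arbitrary $\h$-avoiding coloring $c$ of $K_n$ and let $G$ be its red graph, then show $g_{\rm odd}(G)\ge 7$. Triangle-freeness of $G$ is immediate from forbidding $rrr$. To rule out a red $5$-cycle $v_1v_2v_3v_4v_5v_1$, I would observe that for any two adjacent red edges $v_{i-1}v_i$ and $v_iv_{i+1}$, the chord $v_{i-1}v_{i+1}$ can be neither red (by $rrr$) nor blue (by $rrb$), hence must be yellow. In particular $v_1v_3$ and $v_3v_5$ are yellow, while $v_1v_5$ is red, producing a forbidden $yyr$ triangle on $\{v_1,v_3,v_5\}$. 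Thus $g_{\rm odd}(G)\ge 7$, so $\alpha(G)\ge g(n)$ by definition of $g$, and any independent set in $G$ is a blue/yellow clique in $c$.

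For the upper bound I would fix a graph $H$ on $n$ vertices with $g_{\rm odd}(H)\ge 7$ and $\alpha(H)=g(n)$, and color $K_n$ by setting $uv$ red if $uv\in E(H)$, yellow if $d_H(u,v)=2$, and blue if $d_H(u,v)\ge 3$. Avoidance of $rrr$ and $rrb$ is immediate from the construction. For $yyr$: if incident yellow edges $uv,vw$ were completed by a red edge $uw$, I would fix vertices $x,y$ witnessing $d_H(u,v)=2$ and $d_H(v,w)=2$, and do a short case analysis using triangle-freeness of $H$ (the key step being that $x=y$ would force a triangle $uxw$ in $H$) to conclude that $u,v,w,x,y$ are five distinct vertices spanning a $5$-cycle in $H$, contradicting $g_{\rm odd}(H)\ge 7$.

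It then remains to bound each two-colored clique. The bound $S_{by}^c\le g(n)$ is immediate since a blue/yellow clique is an independent set in $H$, and $S_{rb}^c\le 2g(n)$ follows by exactly the argument of \Cref{lem2.f1}: in a red/blue clique the red graph is a matching. The main obstacle, and the step that really uses $yyr$-freeness, is bounding $S_{ry}^c$. In a red/yellow clique $S$, every pair is at $H$-distance at most $2$, and forbidding $yyr$ says precisely that if $uv$ and $vw$ are non-edges of $H$ in $S$ then so is $uw$. Hence non-adjacency in $H$ is transitive on $S$, so $H[S]$ is complete multipartite; being triangle-free, it has at most two parts, so $H[S]\cong K_{a,b}$. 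Then $\alpha(H[S])=\max(a,b)\ge|S|/2$, giving $|S|\le 2\alpha(H)\le 2g(n)$ and completing the proof.
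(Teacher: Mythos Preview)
Your proof is correct and follows essentially the same approach as the paper's. Both use the identical construction (red $=$ edges of an odd-girth-$\ge 7$ graph $H$ with $\alpha(H)=g(n)$, yellow $=$ distance $2$, blue $=$ distance $\ge 3$), verify $yyr$-avoidance via the same ``a $yyr$ triangle forces a red $C_3$ or $C_5$'' argument, and bound $S_{ry}^c$ by the same observation that transitivity of the yellow relation on a red/yellow clique makes $H[S]$ complete multipartite (equivalently, yellow is a disjoint union of cliques), hence bipartite by triangle-freeness; your phrasing via complete multipartite graphs is a cosmetic variant of the paper's ``yellow is induced-$P_3$-free'' phrasing.
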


\begin{proof}
For the upper bound, let $G$ be an $n$-vertex graph with odd girth at least $7$ and independence number $g(n)$. Color its edges red, the edges corresponding to pairs of vertices at distance two in $G$ yellow, and all remaining edges blue. Clearly, we have no $rrr$ and no $rrb$ triangles. Assume that there is a $yyr$ triangle. 
Since vertices of any yellow edge are endpoints of a red path of length $2$, we see that a $yyr$ triangle implies the existence of an $rrr$ triangle, or a red cycle of length $5$.
Note that $S_{yb}= \alpha(G) = g(n)$. Consider a largest yellow/red clique $X$. We see that since the yellow color class is induced $P_3$-free in $X$, the yellow edges form disjoint cliques in $X$, and there are at most two of them since there are no red triangles. So, $X$ contains a yellow clique on at least $|X|/2$ vertices, i.e., $|X|/2 \leq \alpha(G) =g(n)$, thus $S_{ry}=|X|\leq 2 g(n)$.  Similarly, let $Y$ be a largest red/blue clique. The red edges in it must form a matching, so there is a blue clique of size at least $|Y|/2 $, in particular $|Y|/2 \leq \alpha(G) = g(n)$. Thus $S_{rb} = |Y|\leq 2g(n).$

For the lower bound, consider an $\{rrr, rrb, yyr\}$-avoiding coloring of a complete graph on $n$ vertices.
The red graph $G$ does not have $5$-cycles because otherwise all other edges induced by the vertices of that cycle must be yellow, forcing a $yyr$ triangle. Thus the red graph has odd girth at least $7$.  We have that $S_{by} = \alpha(G) \geq g(n)$.
\end{proof}

\begin{lemma}\label{lem3.rrr.rrb.yyr}
Let $\h = \{\rrr,\rrb,\yyr\}$. Then $h_2(n, \h) = \Omega(n^{2/3}\log^{1/3}n)$ and $h_2(n, \h) = O(n^{3/4}\log n)$. 
\end{lemma}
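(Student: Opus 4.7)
The plan is to leverage the just-established \Cref{lem:g(n)}, which sandwiches $h_2(n,\h)$ between $g(n)$ and $2g(n)$, and reduce both bounds to estimating $g(n)$. Since the definition of $g(n)$ involves the odd girth condition $g_{\mathrm{odd}}(G) \ge 7$, I can pull in the Ramsey-theoretic bounds on $C_5$-free Ramsey numbers and on $\{C_3, C_4, C_5\}$-free Ramsey numbers listed in Section~\ref{Preliminaries}.

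For the lower bound, I would observe that an $n$-vertex graph with odd girth at least $7$ is in particular $C_5$-free. Translating the upper bound $R(C_5, K_k) = O(k^{3/2}/\sqrt{\log k})$ of Caro et al.\ \cite{CL} into a statement about independence numbers and solving for $k$ in terms of $n$ yields $\alpha(G) = \Omega(n^{2/3}\log^{1/3} n)$ for every $C_5$-free graph on $n$ vertices. Applied to a graph realizing $g(n)$, this gives $g(n) = \Omega(n^{2/3}\log^{1/3} n)$, and hence $h_2(n, \h) \ge g(n) = \Omega(n^{2/3}\log^{1/3} n)$ by \Cref{lem:g(n)}.

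For the upper bound, I would use Spencer's \cite{Sp} construction witnessing $R(\{C_3, C_4, C_5\}, K_k)$, which produces a graph on $n$ vertices of girth at least $6$ (hence odd girth at least $7$) whose independence number is $O(n^{3/4}\log n)$. This yields $g(n) = O(n^{3/4}\log n)$, and so $h_2(n, \h) \le 2g(n) = O(n^{3/4}\log n)$ by the other direction of \Cref{lem:g(n)}.

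The main obstacle is really a limitation rather than a difficulty: there is a genuine gap between the two estimates, because the lower bound uses only that the extremal graph is $C_5$-free, while the upper-bound construction is girth-$6$ (i.e., also $C_4$-free). Closing this gap would require either a stronger lower bound on $\alpha(G)$ for graphs of odd girth at least $7$ that exploits the absence of $C_3$ in addition to the absence of $C_5$, or a construction of graphs of odd girth at least $7$ with many $4$-cycles yet small independence number; neither seems to follow from the standard tools cited in Section~\ref{Preliminaries}.
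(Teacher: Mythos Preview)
Your proposal is correct and follows essentially the same approach as the paper: reduce to bounding $g(n)$ via \Cref{lem:g(n)}, then use the Caro et al.\ bound $R(C_5,K_t)=O(t^{3/2}/\sqrt{\log t})$ for the lower bound and Spencer's lower bound on $R(\{C_3,C_4,C_5\},K_t)$ for the upper bound. Your closing remark about the gap mirrors the paper's own discussion in the final section.
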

\begin{proof}
By \Cref{lem:g(n)}  it is sufficient to bound $g(n)$. 

By Caro et al. \cite{CL}, we have $R(C_5, K_t) \le C\frac{t^{3/2}}{\sqrt{\log t}}$, i.e., any $C_5$-free graph on $n$ vertices has independence number at least $C' n^{2/3}\log^{1/3}n$. Thus $g(n) \ge C' n^{2/3}\log^{1/3}n$. 

By  a  result by Spencer \cite{Sp} we have $R(\{C_3, C_4,C_5\}, K_t) \ge C \left({t}/{\log t}\right)^{4/3}$ for some positive constant $C$, i.e., for $n$ sufficiently large there exists a graph  $G$ on $n$ vertices with no $C_3, C_4$, $C_5$ and $\alpha(G) \le C' n^{3/4}\log n $.  Thus  $g(n) \le C'' n^{3/4}\log n$. 
\end{proof}

\begin{lemma}\label{lem3.rrr.rrb.yyb}
Let $\h = \{\rrr,\rrb,\yyb\}$. Then  we have $$h_2(n, \h) = \Omega(n^{2/3}/\log^{1/3} n) \text{ and }\  h_2(n, \h) = O(n^{2/3}\sqrt{\log n}).$$ 
\end{lemma}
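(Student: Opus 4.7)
The upper bound $h_2(n,\h) = O(n^{2/3}\sqrt{\log n})$ is immediate: Construction~\ref{const9} produces a $3$-edge-coloring avoiding the richer family $\{\rby, \rrr, \rrb, \yyb\}$, and in particular our $\h = \{\rrr, \rrb, \yyb\}$, with no $2$-colored clique of size larger than $Cn^{2/3}\sqrt{\log n}$.

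For the lower bound, fix an $\h$-avoiding edge-coloring $c$ of $K_n$ and set $D := c_0\, n^{2/3}/\log^{1/3} n$ for a small constant $c_0 > 0$. The backbone of the argument is the following pair of structural observations. First, for every vertex $v$, $N_r(v)$ induces a yellow clique: any two $u,w \in N_r(v)$ cannot be joined by a red edge (no $\rrr$) nor a blue edge (no $\rrb$). Second, for every vertex $v$, $N_y(v)$ induces a red/yellow clique (no blue edges inside, by absence of $\yyb$), and the red subgraph inside it is triangle-free. Consequently, $\{v\} \cup N_r(v)$ and $\{v\} \cup N_y(v)$ are both red/yellow cliques, so $S_{ry}(c) \ge 1 + \max_v \max(d_r(v), d_y(v))$. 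Therefore if either $\Delta(G_r) \ge D$ or $\Delta(G_y) \ge D$, we immediately get $h_2(c) \ge D$ and are done.

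The main work lies in the case $\Delta(G_r), \Delta(G_y) < D$. Here every vertex has blue degree at least $n-1-2D$, so the blue graph is very dense. The plan is to adapt the blob/sub-blob strategy of \Cref{lem2.6}. Even though rainbow triangles are allowed and Gallai's theorem does not apply directly, the tight coupling between red neighborhoods and yellow cliques from observation~(i) (and between yellow neighborhoods and red/yellow cliques from observation~(ii)) plays the role of the Gallai partition: one classifies vertices by the sizes of their red neighborhoods, applies \Cref{cor:AKS} to the triangle-free red subgraph inside each $N_y(v)$ to extract a yellow clique of size $\Omega(\sqrt{d_y(v) \log d_y(v)})$, and iteratively restricts to a sub-coloring in which the blocks (analogues of sub-blobs) all have the same size $k$ and the number of blocks is $\ell$. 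In that clean sub-instance one has $h_2(c) \gtrsim \max(k,\,\ell,\, \sqrt{\ell \log \ell}\cdot k/\ell)$ (the last term coming from a union of yellow cliques across blocks, after losing logarithmic factors in the iterative cleanup), and optimising over the admissible ranges of $k$ and $\ell$ forces one of these quantities to be $\Omega(n^{2/3}/\log^{1/3} n)$.

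The main obstacle is exactly this last balancing: because rainbows are allowed, between two blocks the edges need not be a single color, so the block decomposition is coarser than in \Cref{lem2.6}, and two rounds of dyadic pigeonholing (on sub-blob sizes, then blob sizes) are needed, each costing a $\log n$ factor. These losses, together with the $\sqrt{\log}$ coming from applying \Cref{cor:AKS} inside a $N_y(v)$, account for the $\log^{1/3} n$ discrepancy between the lower bound $\Omega(n^{2/3}/\log^{1/3} n)$ proved here and the upper bound $O(n^{2/3}\sqrt{\log n})$ obtained from Construction~\ref{const9}.
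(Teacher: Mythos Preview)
Your upper bound via Construction~\ref{const9} is correct and matches the paper.

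Your lower bound argument has a genuine gap. The opening observations are fine: $N_r(v)$ is a yellow clique and $N_y(v)$ is red/yellow, so large red or yellow degree immediately gives a large red/yellow clique. But the ``main case'' is where all the content lies, and here you never define what your blocks are or prove any structural property about them. You write that you will ``adapt the blob/sub-blob strategy of \Cref{lem2.6}'', but that strategy depends essentially on Gallai's theorem (no rainbow triangle), which is unavailable here; you acknowledge this but offer no substitute. Classifying vertices by $|N_r(v)|$ or working inside various $N_y(v)$'s does not give a partition, and you never establish the crucial property that lets one glue local cliques together, namely that edges between distinct blocks are all of one color. Your asserted balancing $\max(k,\ell,\sqrt{\ell\log\ell}\cdot k/\ell)$ is therefore unsupported (and in fact does not match the correct trade-off, which is between $k$ and $(n'/k)\sqrt{k\log k}$).

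What the paper actually does is a concrete structural reduction with no analogue in your sketch: start from a \emph{maximal blue clique} $B$; each vertex outside $B$ sends at most one red and at most one yellow edge to $B$ (else $rrb$ or $yyb$), so one can peel off the vertices sending a red edge (their red graph turns out to have no odd cycle, giving a large blue/yellow clique if this set is big), and then show that the remaining $\ge n/2$ vertices split along their unique yellow neighbor in $B$ into sets $R_i$ that are red/yellow inside with no yellow edges between them. A further bipartiteness argument on the inter-$R_i$ red edges yields $\ge n/4$ vertices partitioned into red/yellow cliques $T_i$ with \emph{only blue} edges between distinct $T_i$'s. Only then does one do a single dyadic pigeonhole on the $|T_i|$'s and apply \Cref{cor:AKS} inside each $T_i$ to balance $S_{ry}\ge k$ against $S_{by}\ge (n'/k)\cdot C\sqrt{k\log k}$. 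This structural step via a maximal blue clique is the missing idea in your proposal.
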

\begin{proof}
For the lower bound, let $c$ be an $\h$-avoiding coloring of $K_n$, and let the vertex set be $V$. \\

We shall argue that either our lower bound holds or there is a subset of at least $n/4$ vertices that is a pairwise disjoint union of red/yellow cliques with only blue edges in between. We shall conclude by showing that such a coloring has a large $2$-colored clique of a desired size.\\

Consider a maximal blue clique $B$. 
Then each vertex in $V-B$ sends either a red or a yellow edge to $B$. Moreover, each vertex in $V-B$ sends at most one red and at most yellow edge to $B$. Let $V_1$ be the set of vertices in $V-B$ that send red edges to $B$.  Then these edges form a family $Q$ of pairwise vertex-disjoint stars with centers in $B$. Note that there is no red odd cycle in $V_1$. Indeed, otherwise this cycle contains vertices from three distinct stars from $Q$. In particular, there is a vertex $v$  in one star from $Q$ in $V_1$ that sends red edges in $V_1$  to two distinct stars from $Q$, say with centers $w_1, w_2\in B$.  Then $w_1v$ and $w_2v$ are yellow, so $vw_1w_2$ is a $yyb$ triangle, a contradiction.
So, if $V_1$ has $Cn$ vertices then the red graph induced by $V_1$ has an independent set of size at least $Cn/2$. This gives $S_{yb}\geq Cn/2$. So, we can assume that $|V_1|<Cn$.
We can also assume that $|B|<Cn$. So, let $V_2= V-(B\cup V_1)$. Thus $|V_2|> n/2$ (by taking $C=1/4$).\\

Now, there are only yellow and blue edges between $V_2$ and $B$ and moreover the yellow edges among those form pairwise vertex-disjoint stars with centers in $B$. Let $R_1, \ldots, R_m$ be the intersections of the vertex sets of these stars and  $V_2$. In particular, $V_2$  is the union of the $R_i$s.  Note that there are no yellow edges between $R_i$’s and there are no blue edges within $R_i$’s, otherwise we obtain a $byy$ triangle.\\

There is no vertex $v \in V_2$ that sends a red edge to two different $R_i$'s: 
assume the contrary, i.e., we have red edges $w_1v$ and $w_2v$ with $w_1$, $w_2$ belonging to different $R_i$'s. But then $w_1w_2$ connects two different $R_i$'s, but can be neither red nor blue without creating a forbidden pattern, a contradiction. Thus, the red graph whose edges have endpoints in different $R_i$'s is bipartite. Let $V_3 \subseteq V_2$ be a larger part of such a bipartition (i.e. $|V_3| \ge n/4$) and let $T_i = R_i \cap V_3$.  Then the $T_i$'s  are red/yellow and all edges in between are blue. \\

The remaining part of the proof shows that in any coloring $c'$ of $K_n$  that is formed by pairwise vertex-disjoint red/yellow cliques $T_1, \ldots, T_m$  with all edges between them blue has a large bi-colored clique. This will imply the lower bound $h_2(c) = \Omega(n^{2/3}/(\log^{1/3} n))$. The logarithmic factors here could probably be improved.\\

We shall split the $T_i$'s  according to sizes. Let $X_i$ be the union of all $T_j$ of sizes from $2^i$ to $2^{i+1}-1$, where $i=0,\ldots,\log n$. Consider a largest $X_i$, i.e., $|X_i| \ge |V_3|/\log n \ge  {n}/{(4\log n)}$. Delete at most half the vertices from each $T_j$  in $X_i$ such that all members $T_j$ of $X_i$ have the same size, say $k$. Let the resulting set be $X'$. Let $c'$ be the coloring that results from restricting $c$ to $X'$. Then $c'$ consists of red/yellow cliques of the same size $k$ and only blue edges in between. In addition,  $|X'| \ge {n}/{(8 \log n)}$. 
Then we have $S_{ry} \ge k$. Moreover, since the red graph is triangle-free, \Cref{cor:AKS} implies that we may find a yellow clique of size $C\sqrt{k\log k}$ inside each of the red/yellow cliques. Hence, $$S_{by} \ge C\sqrt{k \log k} \frac{|X'|}{k} \ge C'\frac{n}{\log n} \sqrt{\frac{\log k}{k}}.$$ If $k\le n^{2/3}/\log^{1/3} n $, then $S_{by} \ge C''n^{2/3}/\log^{1/3} n$. Otherwise, we get a large red/yellow clique, which concludes the proof. \\

The upper bound follows from Construction \ref{const9}.
\end{proof}

\begin{lemma}\label{lem3.rrr.bbr.bby}
Let $\h = \{\rrr,\bbr,\bby \}$. Then we have $h_2(n, \h) = \Theta(\sqrt{n \log n})$.
\end{lemma}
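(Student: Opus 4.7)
The plan is to obtain both bounds directly from results already established in the paper, so the proof should be very short. For the lower bound, I would observe that $\{rrr\}\subseteq \h$, so every $\h$-avoiding coloring of $K_n$ is in particular $\{rrr\}$-avoiding; consequently
\[
h_2(n,\h)\;\geq\; h_2(n,\{rrr\})\;=\;\Omega(\sqrt{n\log n}),
\]
by \Cref{lem1.rrr} (which was itself a consequence of the Ajtai--Koml\'os--Szemer\'edi bound, \Cref{cor:AKS}). Equivalently, in any $\h$-avoiding coloring the red graph is triangle-free, so its largest independent set has size $\Omega(\sqrt{n\log n})$ and yields a blue/yellow clique of that size.

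For the upper bound, the family $\h=\{rrr,bbr,bby\}$ matches exactly the list of forbidden patterns of Construction~\ref{const4}, which produces a $3$-edge-coloring $c$ of $K_n$ containing none of $rrr$, $bbr$, $bby$ and satisfying $h_2(c)=O(\sqrt{n\log n})$. Hence $h_2(n,\h)=O(\sqrt{n\log n})$.

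Combining the two bounds gives $h_2(n,\h)=\Theta(\sqrt{n\log n})$. There is no real obstacle here: the key observations, namely that forbidding $bbr$ and $bby$ simultaneously forces each blue connected component to be a clique while forbidding $rrr$ keeps the red graph triangle-free, are what make Construction~\ref{const4} tight, but for the statement of the lemma I only need the one-line reduction to \Cref{lem1.rrr} together with the already-proved construction.
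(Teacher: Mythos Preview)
Your proof is correct and matches the paper's own argument essentially verbatim: the lower bound is obtained from $\{rrr\}\subseteq\h$ via \Cref{lem1.rrr}, and the upper bound from Construction~\ref{const4}. There is nothing to add.
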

\begin{proof}
For the lower bound by \Cref{lem1.rrr} we have $h_2(n, \h) \ge h_2(n, \{rrr\}) = \Theta(\sqrt{n \log n})$.

The upper bound follows from Construction \ref{const4}.
\end{proof}

\begin{lemma}\label{lem3.rrr.bbr.yyr}
Let $\h = \{\rrr,\bbr,\yyr \}$. Then we have $h_2(n, \h) =\ceil{\frac n2} $.
\end{lemma}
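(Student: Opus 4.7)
The plan is to take the upper bound directly from Construction~\ref{const2} and prove a matching lower bound by showing that, in any $\h$-avoiding coloring of $K_n$, the red graph is bipartite. The coloring in Construction~\ref{const2} avoids the larger family $\{rrr, yyy, rrb, rry, bbr, bby, yyr\} \supseteq \h$ and satisfies $h_2(c) \le \ceil{\frac{n}{2}}$, which gives the upper bound immediately.

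For the lower bound, take an arbitrary $\h$-avoiding coloring $c$ of $K_n$ and let $G$ denote its red graph. The key local observation is: for every red edge $uv$ and every $w \in V \setminus \{u, v\}$, one must have $c(uw) \ne c(vw)$, since a common color would realize the forbidden triangle $rrr$, $bbr$, or $yyr$ on $\{u, v, w\}$. In particular $G$ is triangle-free. If I can further show that $G$ is bipartite, then the larger part of a bipartition has at least $\ceil{\frac{n}{2}}$ vertices and contains no red edge inside, so it spans a blue/yellow clique of the required size.

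To prove bipartiteness, I would argue by contradiction: assume $G$ has an odd cycle, take a shortest one $C = v_0 v_1 \cdots v_{\ell-1} v_0$, so $\ell$ is odd, $\ell \ge 5$, and $C$ is chordless in $G$ (a red chord would split $C$ into two strictly shorter cycles whose lengths sum to $\ell + 2$, one of which is odd). Hence every chord of $C$ has color in $\{b, y\}$. Applying the local observation to each red cycle-edge $v_i v_{i+1}$ and a vertex $v_j$ with $j \notin \{i-1, i, i+1, i+2\} \pmod \ell$ yields $c(v_i v_j) \ne c(v_{i+1} v_j)$, both colors lying in $\{b, y\}$. Fixing $v_j$ and letting $m$ vary, the chord colors $c(v_j v_{j+m})$ form an alternating sequence in $\{b, y\}$ for $m = 2, 3, \ldots, \ell - 2$. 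Setting $a_j := c(v_j v_{j+2})$, this gives $c(v_j v_{j+m}) = a_j$ for even $m$ and $\bar a_j$ for odd $m$.

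The same chord $v_j v_{j+m}$, viewed from the endpoint $v_{j+m}$, corresponds to offset $m' = \ell - m$, which has the opposite parity of $m$ since $\ell$ is odd; matching the two expressions for its color in both parity cases forces $a_{j+m} = \bar a_j$ for every $m \in \{2, \ldots, \ell-2\}$. Iterating $a_{j+2} = \bar a_j$ and using $\gcd(2, \ell) = 1$, after $\ell$ steps we return to index $j$ while flipping the value $\ell$ times, yielding $a_j = \bar a_j$, a contradiction. Thus $G$ is bipartite, completing the lower bound. The main obstacle is precisely this bipartiteness step; the crux is the identity $a_{j+m} = \bar a_j$ extracted from the opposite parity of $m$ and $\ell - m$, whereas the chordless property of the shortest odd cycle and the passage from bipartite $G$ to a large blue/yellow clique are routine.
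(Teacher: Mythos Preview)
Your proof is correct and follows the same overall strategy as the paper: the upper bound via Construction~\ref{const2}, and the lower bound by showing that in any $\h$-avoiding coloring the red graph is bipartite, whence a part of size at least $\ceil{n/2}$ gives a blue/yellow clique.

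The only difference lies in how bipartiteness of the red graph is established. The paper analyzes a shortest odd red cycle by hand, splitting into the cases $k=5$ and $k\ge 7$ and tracking specific chords until a forbidden triangle appears. Your argument is a uniform parity argument: from the local observation that every red edge forces distinct colors on the two edges to any third vertex, you deduce that the chord colors $c(v_jv_{j+m})$ alternate in $\{b,y\}$ along $m$, and then exploit that $m$ and $\ell-m$ have opposite parity (since $\ell$ is odd) to obtain $a_{j+2}=\bar a_j$, which is incompatible with $\ell$ odd. This avoids the case distinction and gives a cleaner derivation of the same conclusion; the paper's version is more hands-on but arrives at the same contradiction.
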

\begin{proof}
For the lower bound,  let $c$ be an $\h$-avoiding coloring of $K_n$. 
Assume that $c$ contains a red odd cycle. Let $v_1\cdots v_k v_1$, $k \ge 5$ be a shortest odd red cycle. First let $k \ge 7$. Without loss of generality, we have $c(v_1v_3)=b$. Then $c(v_1v_4)=y$, since otherwise $v_1,v_3,v_4$ induce a $bbr$ triangle.
Thus, we have $c(v_1v_i) = b$ for $i$ odd and $c(v_1v_i) = y$ for $i$ even, and thus, $c(v_1v_{k-1}) = y$.
Since $c(v_1v_3)=b$, we also have $c(v_3v_k) = y$, and hence $c(v_3v_{k-1}) = b$.
Thus, we must have $c(v_{k-1}v_2) = y$, since otherwise $v_2,v_3,v_{k-1}$ induce  a $bbr$ triangle, but then $v_1,v_2,v_{k-1}$ induce a $yyr$ triangle, a contradiction. \\
If $k = 5$, w.l.o.g. we have $c(v_1v_3) = b$. then we must have $c(v_1v_4) = y = c(v_3v_5)$, since otherwise $v_1, v_3, v_4$ or $v_1, v_3, v_5$ induce a $bbr$ triangle. But then we must have $c(v_2v_4) = b = c(v_2v_5)$, since otherwise $v_1, v_2, v_4$ or $v_2, v_3, v_5$ induce a $yyr$ triangle. But then $v_2,v_4,v_5$ induce a $bbr$ triangle, a contradiction.\\
Thus, the red graph is bipartite, so $c$ contains a blue/yellow clique  of size at least $\ceil{\frac n2}$.\\

The upper bound follows from  Construction \ref{const2}.
\end{proof}

\begin{lemma}\label{lem3.rrr.bbr.yyb}
Let $\h = \{\rrr,\bbr,\yyb \}$. Then we have $h_2(n, \h) = \ceil{\frac {3n}7} + \epsilon_1(n)$, where $\epsilon_1(n) = 1$ if $n \equiv 2 \pmod{7}$, and $\epsilon_1(n) = 0$ otherwise.
\end{lemma}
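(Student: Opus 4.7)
The upper bound is provided by Construction~\ref{const:3n/7}. For the lower bound, let $c$ be an arbitrary $\h$-avoiding coloring of $K_n$, and for each vertex $v$ write $r, b, y$ for its red, blue, and yellow degrees (so $r + b + y = n-1$). The first step is to extract detailed local structure around $v$. The forbidden patterns immediately imply: $N_r(v)$ spans a blue/yellow clique (no red edges, by $rrr$), and within $N_r(v)$ the $yyb$ constraint forces yellow components to be cliques, so $N_r(v)$ partitions into yellow cliques $Y_1, \ldots, Y_s$ with all edges between distinct $Y_i$'s blue. Analogously, $N_b(v)$ contains no red edge (by $bbr$) and partitions into yellow cliques $Z_1, \ldots, Z_t$ with blue inter-part edges. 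Finally, $N_y(v)$ is a red/yellow clique (no blue edges, by $yyb$) with triangle-free red subgraph. Moreover, edges between $N_r(v)$ and $N_b(v)$ are red or yellow (never blue, by $bbr$), while edges between $N_b(v)$ and $N_y(v)$ are red or blue (never yellow, by $yyb$).

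These observations yield several bi-colored clique bounds: $S_{by}^c \geq \max(r, b+1)$ (from $N_r(v)$ and $N_b(v) \cup \{v\}$); $S_{ry}^c \geq y+1$ (from $N_y(v) \cup \{v\}$) and $S_{ry}^c \geq |Y_i| + |Z_j|$ for any $i, j$ (since $Y_i \cup Z_j$ is a red/yellow clique, using the cross-neighborhood constraint between $N_r(v)$ and $N_b(v)$); and $S_{rb}^c \geq s+1$ as well as $S_{rb}^c \geq t+1$ (picking one vertex per yellow part in $N_r(v)$ or $N_b(v)$ and adjoining $v$). Summing $r + b + y = n-1$ with the bounds $r \leq S_{by}^c$, $b \leq S_{by}^c - 1$, $y \leq S_{ry}^c - 1$ gives $2 S_{by}^c + S_{ry}^c \geq n+1$, whence $h_2(c) \geq \lceil(n+1)/3\rceil$. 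Unfortunately, this matches $\lceil 3n/7\rceil$ only for very small $n$.

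To close the gap to the tight bound $\lceil 3n/7 \rceil + \epsilon_1(n)$, the plan is to pick a vertex $v$ optimizing some combination (for instance, a vertex with maximum yellow degree, or a vertex maximizing $r + b$), and carry out a case analysis on the part sizes $s, t, |Y_i|, |Z_j|$. In each case, either one of the refined bounds $S_{ry}^c \geq |Y_i| + |Z_j|$ or $S_{rb}^c \geq \max(s,t)+1$ already pushes a bi-colored clique above $\lceil 3n/7\rceil$, or the degree sequence and partition structure must be so tightly constrained that the coloring closely mirrors the $C_7$-blow-up of Construction~\ref{const:3n/7}, whose bi-colored cliques have size exactly $\lceil 3n/7 \rceil + \epsilon_1(n)$. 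The correction $\epsilon_1(n) = 1$ for $n \equiv 2 \pmod 7$ then comes from an integer-valued analysis: in that residue class, any distribution of $w = \lceil n/7\rceil$ and $x = \lfloor n/7\rfloor$ among seven cyclic positions forces a consecutive-triple arrangement whose bi-colored clique exceeds $\lceil 3n/7 \rceil$ by one, paralleling the computation in the proof of the upper bound.

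The principal obstacle is precisely this sharpness: elementary degree counting yields only $\lceil(n+1)/3\rceil$, which for $n \geq 8$ is strictly less than $\lceil 3n/7 \rceil$. Bridging the gap requires fully exploiting the rigidity of the yellow-clique partitions of $N_r(v)$ and $N_b(v)$ together with the cross-neighborhood color constraints, via careful tradeoffs among the three quantities $S_{by}^c$, $S_{ry}^c$, and $S_{rb}^c$. The extra unit in $\epsilon_1(n)$ adds a further layer of delicacy requiring modular case analysis, and is the most technically intricate part of the argument.
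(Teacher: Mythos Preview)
Your local analysis around a vertex $v$ is correct as far as it goes, but it stops at $h_2(c)\ge\lceil(n+1)/3\rceil$ and the remainder is only a plan, not a proof. The sentence ``either one of the refined bounds\ldots\ or the coloring closely mirrors the $C_7$-blow-up'' is precisely where the content lies, and you have not supplied it. Nothing in your setup forces the coloring toward the $C_7$ structure: for instance, the edges between $N_r(v)$ and $N_y(v)$ are completely unconstrained by $\{rrr,bbr,yyb\}$, so case analysis on the partitions $Y_1,\ldots,Y_s$ and $Z_1,\ldots,Z_t$ alone cannot pin down the global picture. You also implicitly need $s,t\le 2$ for your $S_{ry}\ge|Y_i|+|Z_j|$ bound to be useful, but you never argue this (and when $s\ge3$ or $t\ge3$ you have a blue triangle, which is a separate case you do not handle).

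The paper's argument is genuinely different and global rather than local. It first disposes of the case where there is a blue $K_3$ (this already gives $S_{by}\ge\lceil n/2\rceil$ by a direct neighbourhood argument). In the remaining case both the red and blue graphs are triangle-free; the proof then studies the odd girth of the blue graph: if blue is bipartite we get $S_{ry}\ge\lceil n/2\rceil$, a blue $C_5$ is impossible (it forces a $yyb$ triangle), and blue odd cycles of length $\ge9$ are ruled out by a chord-colouring contradiction. Hence the shortest blue odd cycle has length~$7$, and one shows every vertex ``mimics'' a vertex of this $C_7$, so $c$ is exactly a blow-up of Construction~\ref{const:3n/7}. The bound $\lceil3n/7\rceil$ (and the extra $+1$ when $n\equiv2\pmod7$) then follows by averaging over the seven triples $W_i$ and a short integrality argument. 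Your neighbourhood-partition framework does not naturally lead to this rigidity step, and without it the gap between $\lceil(n+1)/3\rceil$ and $\lceil3n/7\rceil$ remains open.
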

\begin{proof}
The upper bound follows from Construction \ref{const:3n/7}.\\

For the lower bound, let $c$ be an $\h$-avoiding coloring of a $n$-vertex graph on a vertex set $V$.    Assume first that there is a blue clique $B$ of size $|B|\ge 3$. Then each vertex not in $B$ sends at most one yellow edge  to $B$ (otherwise we have a $yyb$ triangle) and no vertex not in $B$ sends both blue and red edges to $B$ (otherwise we have a $bbr$ triangle). Let $A$ be the set of vertices from $V-B$  that send only blue and yellow edges to $B$ and let  $O= V-B-A$, i.e., each vertex from $O$ sends a red edge to $B$. Then, in particular, each vertex from $O$ sends no blue edges to $B$, thus must send at least two red edges and at most one yellow edge to $B$.  
Thus for any two vertices of $A$ there is a vertex in $B$ joined to both of them with blue edges. Similarly, for any two vertices of $O$ there is a vertex in $B$ joined to both of them with red edges. 
Then $A$ induces no red edges and neither does $O$. Note that $B, A$ and $O$ span the whole graph;  $B \cup A$ induces no red edge and $O$ contains no red edge.  Then  $S_{by}^c \geq \max\{|B\cup A|, |O|\}\geq \ceil{\frac n2}$. \\

Thus, we can assume that both the red and blue graph are triangle-free. We shall show that the blue graph has a special structure, in particular it must be a blow-up of a cycle on $7$ vertices.\\

If the blue graph contains no odd cycle, then it is bipartite and hence $S_{ry} \geq \ceil{\frac n2}$.  Thus we can assume that there is a blue odd cycle. The blue graph also does not contain a cycle length $5$ since otherwise all other edges spanned by the vertices of this cycle are yellow, producing a $yyb$ triangle. Assume the shortest blue odd cycle has length  $k\geq 9$. Let $C=C_{k} = v_1\ldots v_{k}v_1$ be a shortest blue odd cycle with $k\ge 9$. Then $C$ has no blue chord, otherwise there  is a shorter blue odd cycle.  Fix a vertex, $v_1$, and  order chords incident to $v_1$ as they appear on the cycle, i.e., $v_1v_3, v_1v_4, \ldots, v_1v_{k-2}$.   Each  chord is red or yellow. There are no two consecutive yellow chords, otherwise we have a $yyb$ triangle. 
 We have that $v_1v_3$ and $v_1v_{k-2}$ are yellow, otherwise there is a $bbr$ triangle.  In addition $v_1v_4$ is red, otherwise there are two consecutive yellow chords. 
 Assume that there are two consecutive red chords incident to $v_1$. Let these chords be, without loss of generality,  $v_1v_i$ and $v_1v_{i+1}$, for  $i>4$. 
Then $v_4v_i$ and $v_4v_{i+1}$ must be yellow, resulting in a $yyb$ triangle.  Thus the chords incident to $v_1$ must have alternating colors $yry \ldots ry$. However, this is impossible since the number of such chords is even.

Thus,  a shortest  blue odd cycle has length $7$. Let the vertex set of such a $7$-cycle be $Y = \{v_1,\ldots, v_7\}$. Then $c(v_iv_j)= y$ if $v_i$ and $v_j$ are at distance two on  $C$ and $c(v_iv_j)= r$ if $v_i$ and $v_j$ are at distance three on $C$.\\

Let $x\in V-Y$. Note that $x$ can send at most $3$ yellow edges to $Y$, otherwise we have a $yyb$ triangle.
Assume $x$ sends no blue edge to $Y$. Then it sends at least four red edges to $Y$, whose endpoints 
contain two vertices at distance $3$ on $C$, which creates an $rrr$ triangle, a contradiction.  Thus, $x$ sends at least one blue edge to $Y$.
Assume $x$ sends exactly one blue edge to $Y$, say to $v_1$. Then $xv_2$ and $xv_7$ must be yellow. Then $xv_3$ and $xv_6$ must be red. But this implies that $xv_3v_6$ forms an $rrr$ triangle, a contradiction.
Note that $x$ sends at most two blue edges to $C$, otherwise there is a shorter blue odd cycle.  Since $x$ sends at least one, at most two, and not exactly one blue edge to $Y$, $x$ sends exactly two blue edges to $Y$.  The endpoints of these two edges must be at distance $2$ on the cycle,  otherwise there is a shorter odd cycle. Without loss of generality, let these endpoints be   $v_2,v_7$. Consider a blue cycle $xv_2v_3\cdots v_7x$.  It must have the same color structure as $C$, i.e., in particular $x$ ``mimics"  $v_1$, i.e., $c(xv_i)=c(v_1v_i)$ for all $v_i \in Y-\{v_1\}$. 
Since $x$ was chosen arbitrarily outside of any blue cycle, we have that each vertex in $V-Y$ ``mimics" some vertex on $C$ and thus the coloring contains a spanning blowup of a coloring $c$ restricted to $Y$.\\

 More specifically, we have that $V$ is a disjoint union of parts  $V_0, \ldots, V_6$ such that for any $i, j \in \{0, 1, \ldots, 6\}$, 
with $i, j$, and any $x\in V_i, z\in V_j$, $c(xz) = b$ if $|i-j|=  1 \pmod 7$, $c(xz) = y$ if $|i-j|=  2 \pmod 7$, $c(xz) = r$ if $|i-j|=  3 \pmod 7$.  All the edges induced by each $V_i$  are yellow. That is, we have  a coloring with a structure as in Construction \ref{const:3n/7} (where the $V_i$'s have variable sizes).\\

For each $i= 0, \ldots, 6$,  let $U_i= V_i\cup V_{i+1}\cup V_{i+2}$ and $W_i = V_i \cup V_{i+2} \cup V_{i+4}$ (indices computed modulo $7$). Then each $U_i$  spans a blue/yellow clique and each $W_i$ spans a red/yellow clique.
We have that $|W_0|+\cdots +|W_6| = 3n$, thus there is $W_i$ of size at least $\ceil{ {3n/7}}$.
This proves the lower bound, except when $n \equiv 2 \pmod{7}$.  The remainder of the proof for this special case can be found in the Appendix.
\end{proof}

\begin{lemma}\label{lem3.rrr.bby.yyb}
	Let $\h = \{\rrr,\bby,\yyb \}$. Then we have $h_2(n, \h) = \ceil{\frac{n}2}$.
\end{lemma}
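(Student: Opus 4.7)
The plan is to prove the two bounds of $h_2(n,\h)=\lceil n/2\rceil$ for $\h=\{\rrr,\bby,\yyb\}$ separately: an explicit construction for the upper bound, and a structural plus degree-counting argument for the lower bound.

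For the upper bound I would describe the following construction (the analogue of Construction~\ref{const2} with the roles of blue and yellow swapped). Partition $V(K_n)$ into $V_1,V_2$ with $|V_1|=\lceil n/2\rceil$ and $|V_2|=\lfloor n/2\rfloor$; color all edges inside $V_1$ and all edges inside $V_2$ yellow; choose a matching $M$ between $V_1$ and $V_2$ saturating $V_2$ and color its edges blue; color the remaining cross edges red. A triangle inside one part is $\yyy$, and a triangle with two vertices in one part and one vertex in the other has one yellow edge together with either two red edges (pattern $\rry$, when neither cross edge lies in $M$) or one red and one blue edge (rainbow $\rby$); no vertex lies in two matching edges, so these are the only possibilities. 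Thus none of $\rrr,\bby,\yyb$ appears. The bounds $S_{by}^c\le\lceil n/2\rceil$ and $S_{ry}^c\le\lceil n/2\rceil$ are realized by $V_1$ as a yellow clique (together with the observation that any red/yellow clique contains at most one vertex from each edge of $M$), and $S_{rb}^c\le 2$ holds because any red/blue clique contains at most one vertex of each $V_i$; hence $h_2(c)\le\lceil n/2\rceil$.

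For the lower bound I would first extract four structural facts about any $\h$-avoiding coloring $c$ and any vertex $u$. Since $\rrr\notin c$, every edge inside $R(u)$ is blue or yellow; since $\bby,\yyb\notin c$, every triangle inside $R(u)$ is monochromatic in $\{b,y\}$, and a short edge-chase shows that a $\{b,y\}$-edge-colored complete graph on $\ge 2$ vertices whose triangles are all monochromatic is itself monochromatic---so $R(u)$ is a blue clique or a yellow clique. Analogously, a yellow edge inside $B(u)$ would create a $\bby$ triangle through $u$, so $\{u\}\cup B(u)$ is a red/blue clique of size $|B(u)|+1$; symmetrically $\{u\}\cup Y(u)$ is a red/yellow clique of size $|Y(u)|+1$. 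Finally, for any $v\in B(u)$ and $w\in Y(u)$, the triangle $uvw$ has one blue and one yellow edge, so $vw$ must be red to avoid both $\bby$ and $\yyb$; every edge between $B(u)$ and $Y(u)$ is therefore red.

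Now I would assume for contradiction that $h_2(c)<\lceil n/2\rceil$. Since $R(u)$ is a monochromatic clique, $\{u\}\cup R(u)$ is itself a red/blue or red/yellow clique, forcing $|R(u)|\le\lceil n/2\rceil-2$; similarly $|B(u)|,|Y(u)|\le\lceil n/2\rceil-2$. Fix any $u$; by the $b\leftrightarrow y$ symmetry of $\h$, I may assume $R(u)$ is a blue clique. If no edge between $R(u)$ and $B(u)$ is yellow, then $\{u\}\cup R(u)\cup B(u)$ is a red/blue clique of size $n-|Y(u)|\ge\lfloor n/2\rfloor+2\ge\lceil n/2\rceil$, a contradiction. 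Otherwise choose $r\in R(u)$ and $v\in B(u)$ with $rv$ yellow; for every $r'\in R(u)\setminus\{r\}$ the edge $rr'$ is blue, and the triangle $rr'v$ forces $r'v$ red (to avoid $\bby$ and $\yyb$). Combined with the fact that all of $v$'s edges to $Y(u)$ are red, this gives $|R(v)|\ge|R(u)|-1+|Y(u)|$. Plugging in $|R(v)|\le\lceil n/2\rceil-2$ yields $|R(u)|+|Y(u)|\le\lceil n/2\rceil-1$, and combining this with $|B(u)|\le\lceil n/2\rceil-2$ and the identity $|R(u)|+|B(u)|+|Y(u)|=n-1$ gives $n-1\le 2\lceil n/2\rceil-3$, which fails for every $n$ of either parity.

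The step I expect to be the main obstacle is extracting the full structural claim that $R(u)$ is a \emph{monochromatic} non-red clique rather than merely red-independent (the latter follows at once from $\rrr\notin c$). The weaker statement yields only $|R(u)|\le\alpha(G_r)$ without any immediate improvement on red/blue or red/yellow cliques, and the degree-counting inequality at the end of Case~2 then falls exactly one short of a contradiction for essentially every $n$. It is the ``$+1$'' obtained by adjoining $\{u\}$ to the monochromatic clique $R(u)$ that sharpens $|R(u)|\le\lceil n/2\rceil-1$ to $|R(u)|\le\lceil n/2\rceil-2$ and makes the closing inequality strict.
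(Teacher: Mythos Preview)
Your proof is correct. The upper-bound construction differs cosmetically from the paper's (the paper swaps red and yellow in Construction~\ref{const2}, obtaining two blue cliques with a yellow matching and red elsewhere, whereas you use two yellow cliques with a blue matching and red elsewhere), but both are valid $\h$-avoiding colorings with $h_2(c)\le\lceil n/2\rceil$.

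For the lower bound, both arguments begin with the same structural facts: $R(u)$ is a monochromatic blue or yellow clique, $\{u\}\cup B(u)$ and $\{u\}\cup Y(u)$ are $2$-colored cliques, and all $B(u)$--$Y(u)$ edges are red. The paper then carries out a finer analysis of the bipartite colorings between $N_r$ and $N_y$ (and between $N_r$ and $N_b$), showing in two cases that essentially all of these edges are blue and explicitly building two $2$-colored cliques whose sizes sum to $n+1$. Your route is more economical: either no yellow $R(u)$--$B(u)$ edge exists and $\{u\}\cup R(u)\cup B(u)$ is already a large red/blue clique, or a single yellow edge $rv$ forces $v$ to have at least $|R(u)|-1+|Y(u)|$ red neighbours, which together with the uniform bound $|R(v)|\le\lceil n/2\rceil-2$ yields the contradictory inequality $n-1\le 2\lceil n/2\rceil-3$. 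This degree-counting shortcut replaces the paper's case split and bipartite edge classification with a one-line pigeonhole, at the cost of being slightly less constructive (you derive a contradiction rather than exhibit the large clique).
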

\begin{proof}
For the lower bound  let $c$ be an $\h$-avoiding coloring. Let $v$ be an arbitrary vertex and denote by $N_r$, $N_b$, $N_y$ its red, blue and yellow neighborhoods, respectively. 

Then $N_r$ does not contain a red edge, so it must induce a monochromatic clique that is either blue or yellow. Without loss of generality, we assume it is blue (a symmetric argument deals in the case when it is yellow).

Between $N_b$ and $N_y$ there cannot be a single blue or yellow edge, so between them we have a complete bipartite red graph. Thus, $N_b$ and $N_y$ cannot induce red edges, so $N_b$ induces  a monochromatic blue and $N_y$ a monochromatic yellow clique. 
Now consider the bipartite graph between $N_r$ and $N_y$. There can be no incident blue and yellow edges and each vertex in $N_y$ sends at most one yellow edge to $N_r$ and each vertex in $N_r$ sends at most one blue edge to $N_y$. Likewise 
 in the bipartite graph between $N_b$ and $N_r$, the yellow edges form a matching and no vertex is incident to both blue and yellow. 

If one of the sets $N_r$, $N_b$, $N_y$ contains at most $1$ vertex, then the larger of the other two sets together with $v$ is a $2$-colored a clique of size at least $\ceil{n/2}$, as required.

So we may assume that each of the sets $N_r, N_b, N_y$ contains at least 2 vertices. We consider two cases:\\

Case $1$: There is a vertex $w^* \in N_r$ that sends only yellow edges to $N_y$.

Every vertex in $N_r^* := N_r - \{w^*\}$ sends only red edges to $N_y$ (otherwise, we obtain either a $yyb$ or $bby$ triangle).  This implies that there are no red edges between $N_r^*$ and $N_b$: if otherwise, we obtain a monochromatic red triangle with vertices in $N_r^*$, $N_b$, and $N_y$, using the fact that the bipartite graph with parts $N_b$ and $N_y$ is entirely red. Moreover, since the yellow edges between $N_r^*$ and $N_b$ form a matching, we have that all edges between $N_r^*$ and $N_b$ must be blue. Now, consider the sets $V_1 = N_r^* \cup N_b \cup \{v\}$ and $V_2 = \{w^*\} \cup N_y \cup \{v\}$. Note that $V_1$ is a red/blue clique and $V_2$ is a red/yellow clique. One of them must have size at least $\ceil{n/2}$, completing the proof in this case.\\

Case $2$: No vertex in $N_r$ sends only yellow edges to $N_y$.

Since no blue and yellow edges are incident in the bipartite graph between $N_r$ and $N_y$, every vertex in $N_r$ sends at least one red edge to $N_y$. Now the proof is similar to the previous case: we have that no edges between $N_r$ and $N_b$ are red (otherwise, we obtain a monochromatic red triangle). Hence, as before all edges between $N_r$ and $N_b$ are blue. So $N_r \cup N_b \cup \{v\}$ is a red/blue clique and $\{v\} \cup N_y$ is a yellow clique. One of these two sets has size at least $\ceil{n/2}$, and this completes the proof of the lower bound.\\

The upper bound follows from Construction \ref{const2} with red and yellow swapped.
\end{proof}

\subsection{$\mathcal H$ contains a monochromatic and a rainbow pattern}\label{size3mono-rb}

\begin{lemma}\label{lem3.rby.rrr.bbb}
Let $\h=\{\rby, \rrr, \bbb\}$. Then $\Omega(n^{3/4}/\log^{3/2} n) = h_2(n, \h) = O(n^{3/4}\sqrt{\log n})$.
\end{lemma}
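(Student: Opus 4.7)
The upper bound is Construction~\ref{const:3/4_no_rb}. For the lower bound, the plan is to adapt the two-level decomposition used in the proof of Lemma~\ref{lem2.6}, now exploiting both the forbidden $rrr$ and the additional forbidden $bbb$ patterns.

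Let $c$ be an $\h$-avoiding coloring of $K_n$. As in Lemma~\ref{lem2.6}, first decompose the vertex set into the connected components of the red graph (call them \emph{blobs}) $B_1,\dots,B_p$: since $c$ has no rainbow triangle and each blob is red-connected, all edges between any two blobs share a single color ($b$ or $y$), and no $bbb$ triangle forces the blue subgraph of the blob quotient $K_p$ to be triangle-free. Applying Theorem~\ref{thm:G} inside each $B_i$ yields sub-blobs with red and possibly one further color (blue or yellow) between them, and exactly as in Lemma~\ref{lem2.6} each sub-blob contains no internal red edge; the extra $bbb$-freeness now gives that each sub-blob is $(b,y)$-colored with blue triangle-free. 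Call a blob Type~RY, Type~RB, or Type~R-only according to whether the non-red inter-sub-blob color is yellow, blue, or absent; Type~RB and Type~R-only blobs have at most $5$ and $2$ sub-blobs respectively (else the sub-blob quotient would contain an $rrr$ or $bbb$ pattern). If blobs of Type~RB or Type~R-only collectively span $\geq n/2$ vertices, combining per-blob $by$-cliques (two sub-blobs per RB blob joined by blue, averaging at least $2|B_i|/5$ vertices; the larger sub-blob per R-only blob, at least $|B_i|/2$ vertices) across all blobs via the inter-blob $(b,y)$ edges produces a $by$-clique of size $\Omega(n)$, so we henceforth assume Type~RY blobs span $\geq n/2$ vertices.

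Following Lemma~\ref{lem2.6}'s cleanup, bin blobs dyadically by size and by number of sub-blobs, bin sub-blobs dyadically by size, and keep only the heaviest bins; after losing an $O(\log^{2}n)$ factor we obtain a coloring on $n''\geq \Omega(n/\log^{2}n)$ vertices in which all blobs are Type~RY, each has exactly $\ell$ sub-blobs, and each sub-blob has size $k$; write $n''=p'\ell k$. Extract two kinds of 2-colored cliques. For the $by$-clique, the red quotient inside each blob (on $\ell$ sub-blobs) is triangle-free, so Corollary~\ref{cor:AKS} yields $\Omega(\sqrt{\ell\log\ell})$ pairwise yellow-connected sub-blobs whose union is a $by$-clique of size $\Omega(k\sqrt{\ell\log\ell})$; combining across all $p'$ blobs (inter-blob colors lie in $\{b,y\}$, both compatible with a $by$-clique) gives $S_{by}\geq\Omega(p'k\sqrt{\ell\log\ell})=\Omega(n''\sqrt{(\log\ell)/\ell})$. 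For the $ry$-clique, each sub-blob is $(b,y)$-colored with blue triangle-free, so Corollary~\ref{cor:AKS} yields a yellow clique of size $\Omega(\sqrt{k\log k})$ inside each sub-blob; these combine across the $\ell$ sub-blobs via the $(r,y)$ inter-sub-blob edges into an $ry$-clique of size $\Omega(\ell\sqrt{k\log k})$ per blob; a final application of Corollary~\ref{cor:AKS} to the triangle-free blue blob quotient produces a yellow-clique of $\Omega(\sqrt{p'\log p'})$ blobs, across which these per-blob $ry$-cliques combine via the yellow inter-blob edges into $S_{ry}\geq\Omega(\sqrt{p'\log p'}\cdot\ell\sqrt{k\log k})$.

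Writing $\ell={n''}^{b}$ for $b\in(0,1]$, the two bounds become $\Omega({n''}^{1-b/2}\sqrt{\log n''})$ and $\Omega({n''}^{(1+b)/2}\sqrt{\log n''})$; their maximum is minimized near $b=1/2$, where both equal $\Omega({n''}^{3/4}\sqrt{\log n''})$. Substituting $n''\geq\Omega(n/\log^{2}n)$ gives $h_2(c)\geq\Omega(n^{3/4}/\log^{3/2}n)$, as required. The main technical obstacle is controlling the standardization so the loss is at most $O(\log^{2}n)$, and handling the degenerate parameter regimes (e.g., $k=1$, where the intra-sub-blob yellow-clique extraction is vacuous but the entire Type~RY blob is itself an $ry$-clique of size $\ell$; or $p'$ or $\ell$ being $O(1)$, where Corollary~\ref{cor:AKS} collapses on one of the two quotients) by direct extractions so that the final bound is not spoiled.
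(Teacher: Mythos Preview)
Your proposal is correct and follows essentially the same route as the paper: red-component blobs, Gallai sub-blobs, dispose of the red/blue-between-sub-blobs case via the $R(3,3)=6$ bound, standardize the remaining (red/yellow) blobs to uniform $(\ell,k)$ at an $O(\log^2 n)$ cost, then balance the $S_{by}$ bound from an AKS independent set in the red sub-blob quotient against the $S_{ry}$ bound built from AKS yellow cliques inside sub-blobs and across blobs. The paper splits at $\ell=\sqrt{n}$ where you parametrize $\ell=n''^{\,b}$ and optimize, but this is cosmetic. One small slip: for Type~RB blobs your claim that two \emph{blue}-joined sub-blobs average $\ge 2|B_i|/5$ is not justified (the two largest sub-blobs need not be blue-adjacent), but this is harmless since taking a single largest sub-blob per blob already gives $\Omega(n)$, which is all you need.
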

\begin{proof}
For the lower bound, consider an $\h$-avoiding coloring $c$ of $K_n$. The structure of $c$ is very similar to the structure of  $\{rrr, rby\}$-avoiding colorings.
Consider red components and call their vertex sets \emph{blobs}. Assume that each blob has at least three vertices.  This assumption can be done since if there are at least $n/2$ vertices spanned by red components on at most two vertices, these vertices contain a blue/yellow clique on at least $n/4$ vertices.
Since the coloring is Gallai, \Cref{thm:G} implies that each blob is a union of sets (which we refer to as \emph{sub-blobs}) with all edges between any two sub-blobs of the same color and such that the set of colors between all sub-blobs is either $\{r, b\}$ or $\{r, y\}$.  Moreover, each  sub-blob sends only red edges to some other sub-blob of its blob. All edges between any two blobs are of the same color, either blue or yellow, otherwise there is a rainbow triangle. Lastly, there are no red edges contained in any sub-blob, because otherwise we obtain a monochromatic red triangle.  \\

As in the lemma on $\{rrr, rby\}$-avoiding colorings, (\Cref{lem2.6}) we can assume that for some constant $C''$ there is a subset of  $n' \geq  C''n/\log^2 n$ vertices such that all sub-blobs have the same size, $k$,  and all blobs contain the same number, $\ell$, of sub-blobs. Assume first that at least half the vertices are spanned by blobs with blue/red between sub-blobs.
Then, since the red and blue graph are both triangle-free, there are at most $5$ sub-blobs in each blob, and each sub-blob is blue/yellow. By taking a largest sub-blob in each such blob, we see that $S_{by} \geq n'/10 = \Omega( n/ \log^2 n)$.\\

Now, assume that at least half the vertices are spanned by blobs with red/yellow between sub-blob. All edges between blobs are yellow or blue and all sub-blobs are blue/yellow. As the red graph is triangle-free, applying \Cref{cor:AKS} yields that there are at least $C\sqrt{\ell \log \ell}$ sub-blobs in each blob, such that there are only yellow edges between them. By taking the union of these sets over all blobs we have that 
$$S_{by} \geq C\frac{n'}{k\ell} \sqrt{\ell \log \ell}\cdot k= C\frac{n' \sqrt{\log \ell}}{\sqrt{\ell}}.$$
If $\ell < \sqrt{n}$ then $S_{by} = \Omega(n^{3/4}/ \log^{3/2} n)$.
Otherwise, $\ell \ge \sqrt{n}$.  By picking a yellow clique from each sub-blob and selecting a set of blobs with only yellow edges between them (using \Cref{cor:AKS} again, and the fact that the red and blue graphs are both triangle-free), we have 
$$S_{ry} \geq C'\cdot \ell \sqrt{k \log k}  \sqrt {n'/(k\ell) \log (n'/(k\ell))} = \Omega(\sqrt{\ell n'}) = \Omega(n^{3/4}/\log n).$$

The upper bound follows from Construction \ref{const:3/4_no_rb}.
\end{proof}

\begin{lemma}\label{lem3.rby.rrr.rest}
Let $\h\in \{\{\rby, \rrr, \bbr\}, \{\rby, \rrr, \bby\}, \{\rby, \rrr, \rrb\}\}$.
Then $ h_2(n, \h) = \Omega(n^{2/3} /\log^{3/2}n)$ and $ h_2(n, \h)=  O(n^{2/3} \sqrt{\log n})$. 
\end{lemma}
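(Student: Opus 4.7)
The plan is to obtain both bounds by reduction to results already in hand, with essentially no new work needed; the only care required is matching the forbidden families to the constructions via color permutations.

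For the lower bound, I observe that in each of the three families $\h$, we have $\{\rrr, \rby\} \subseteq \h$. By the monotonicity remark stated in the introduction (if $\mathcal{H} \subseteq \mathcal{H}'$, then $h_2(n, \mathcal{H}) \le h_2(n, \mathcal{H}')$), this gives
\[
h_2(n, \h) \ge h_2(n, \{\rrr, \rby\}) = \Omega(n^{2/3}/\log^{3/2} n),
\]
where the last equality is exactly \Cref{lem2.6}. No further analysis of the structure of $\h$-avoiding colorings is required, since the $\{\rrr, \rby\}$-avoiding argument (involving blobs, sub-blobs, and Gallai's theorem) was already carried out there.

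For the upper bound I will exhibit, for each of the three families, a $3$-edge-coloring coming from one of the constructions in \Cref{Constructions}. For $\h = \{\rby, \rrr, \rrb\}$, I will invoke Construction~\ref{const9} directly, which produces a coloring avoiding the larger family $\{\rby, \rrr, \rrb, \yyb\}$ with $h_2 = O(n^{2/3}\sqrt{\log n})$; restricting to the subfamily only helps. For $\h \in \{\{\rby, \rrr, \bbr\}, \{\rby, \rrr, \bby\}\}$, I will start from Construction~\ref{const7}, which forbids $\{\rby, \rrr, \yyb, \yyr\}$, and apply the color permutation swapping blue and yellow. Because the rainbow pattern $\rby$ and the monochromatic pattern $\rrr$ are invariant under this permutation, while $\yyb \mapsto \bby$ and $\yyr \mapsto \bbr$, the permuted coloring avoids $\{\rby, \rrr, \bby, \bbr\}$, which contains both families of interest. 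The quantity $h_2$ is invariant under color permutations, so the same bound $O(n^{2/3}\sqrt{\log n})$ is preserved.

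The ``main obstacle'' here is really just bookkeeping: verifying that the avoided patterns match up after applying the relevant color permutation, and that the containment $\{\rrr, \rby\} \subseteq \h$ holds for each of the three families. Both are routine. In particular, no new construction and no new lower-bound argument are needed for this lemma.
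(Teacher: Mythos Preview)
Your proposal is correct and matches the paper's own proof essentially verbatim: the lower bound via monotonicity and \Cref{lem2.6}, Construction~\ref{const9} for $\h=\{\rby,\rrr,\rrb\}$, and Construction~\ref{const7} with blue and yellow swapped for the other two families. The extra explanation you give about why the color swap preserves $\rby$ and $\rrr$ while sending $\yyb,\yyr$ to $\bby,\bbr$ is a welcome clarification but not a departure from the paper's argument.
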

\begin{proof}
The lower bound follows from \Cref{lem2.6}: $h_2(n, \h) \geq h_2(n, \{rrr, rby\}) = \Omega(n^{2/3}/\log^{3/2} n)$.\\ 

For the upper bound in case $\h\in  \{\{rby, rrr, bbr\}, \{rby, rrr, bby\}\}$, we use Construction \ref{const7} with blue and yellow swapped.
For the upper bound in case $\h=\{rby, rrr, rrb\}$ we use Construction \ref{const9}.
\end{proof}

\section{Final Remarks} \label{Conclusions}

We have determined $h_2(n, \mathcal{H})$ asymptotically up to logarithmic factors for nearly all families $\mathcal{H}$ of at most three patterns. Aside from improving logarithmic factors, there are two major gaps left. First, for the family $\mathcal{H}_0 := \{\rrr, \rrb, \bby\}$ we were able to show that (see \Cref{lem3.rrr.rrb.bby})
\[
\Omega(\sqrt{n\log n}) = h_2(n, \mathcal{H}_0) = O(n^{2/3}\sqrt{\log n}).
\]

We believe that the upper bound is the correct answer (up to logarithmic terms).

Our second gap comes from the family $\mathcal{H}_1 := \{\rrr, \rrb, \yyr\}$. We showed that $h_2(n, \mathcal{H}_1)$ is related to the function $g(n)$ defined as the smallest independence number of an $n$-vertex graph of odd-girth at least $7$:
\[
g(n) = \min\{\alpha(G): |G|= n \text{ and } G \text{ is } \{C_3, C_5\}\text{-free}\}.
\]
In particular, we showed that $g(n) \le h_2(n, \mathcal{H}_1) \le 2g(n)$. It follows that good bounds on the Ramsey number $R(\{C_3, C_5\}, K_n)$ translate to good bounds on $h_2(n, \mathcal{H}_1)$. Using known results on the Ramsey numbers $R(C_5, K_n)$ and $R(\{C_3, C_4, C_5\}, K_n)$, by \Cref{lem3.rrr.rrb.yyr} we have 
\[
\Omega(n^{2/3}\log^{1/3}n) = h_2(n, \mathcal{H}_1) = O(n^{3/4}\log n).
\]
A consequence of the work of Bohman and Keevash~\cite{BK} and Warnke~\cite{W} is that the $C_5$-free process with high probability terminates in a graph whose independence number is $\Theta(n^{3/4}\log^{3/4}n)$. We suspect that the behavior of the independence number does not change much if one forbids triangles in addition to $C_5$'s. Thus, we conjecture that our upper bound on $h_2(n, \mathcal{H}_1)$ is close to the truth:

\begin{conjecture} $$g(n) =  \Omega(n^{3/4})$$ and thus  \[h_2(n, \{\rrr, \rrb, \yyr\}) = \Omega(n^{3/4}). \]
\end{conjecture}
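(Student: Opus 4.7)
The ``thus'' follows from \Cref{lem:g(n)}, so the plan is to prove the core statement $g(n) = \Omega(n^{3/4})$: every $n$-vertex $\{C_3, C_5\}$-free graph $G$ satisfies $\alpha(G) \geq c\, n^{3/4}$ for some absolute constant $c > 0$. First I would dichotomize on the maximum degree $\Delta = \Delta(G)$. Since $G$ is triangle-free, $N(v)$ is independent for every $v$, so if $\Delta \geq n^{3/4}$ the neighborhood of a vertex of maximum degree already gives an independent set of the required size. The entire difficulty therefore lies in the regime $\Delta < n^{3/4}$, where $G$ is fairly sparse but may still, a priori, have small independence number.

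In that regime I would first exploit the following local structural consequence of $C_5$-freeness: for any vertex $v$ and any edge $w_1 w_2 \in E(G)$ with $w_1, w_2 \in N_2(v)$, the sets $N(w_1) \cap N(v)$ and $N(w_2) \cap N(v)$ must coincide and have size exactly one --- otherwise distinct $u_1 \in N(w_1) \cap N(v)$ and $u_2 \in N(w_2) \cap N(v)$ would yield the $5$-cycle $v u_1 w_1 w_2 u_2 v$. Writing $S_u := \{w \in N_2(v) : N(w) \cap N(v) = \{u\}\}$ and $M$ for the set of ``multi-parent'' vertices in $N_2(v)$, this forces $G[N_2(v)]$ to split into isolated $M$-vertices together with pairwise edge-disjoint triangle-free graphs on the $S_u$. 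Applying \Cref{cor:AKS} inside each $S_u$, combining with the independence of $N(v)$ itself, and optimizing in $\Delta$, yields an independent set of size $\Omega(n^{2/3} \log^{1/3} n)$ --- matching, but not improving, the Caro-Li bound already used in the proof of \Cref{lem3.rrr.rrb.yyr}.

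To gain the factor of $n^{1/12}$ still required, I would aim for a Shearer-type entropy inequality tailored to the $\{C_3, C_5\}$-free setting of the shape
\[
\alpha(G) \;\geq\; c \sum_{v \in V(G)} \frac{\log d(v)}{d(v)^{1/3}},
\]
in contrast to the familiar $d(v)^{-1}$ rate from the triangle-free Shearer inequality. For a graph with average degree $d \leq n^{3/4}$ this would give $\alpha(G) = \Omega(n \log d / d^{1/3}) = \Omega(n^{3/4} \log n)$, closing the gap. The natural route is through a random greedy independent-set process: when a vertex $v$ is added to the independent set and $N[v]$ is deleted, the $C_5$-free constraint (together with the local structural lemma above) should force a larger-than-usual drop in the residual degrees of the remaining vertices, which is exactly what is needed to improve Shearer's bookkeeping from exponent $1$ to exponent $1/3$ in the denominator.

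The decisive technical hurdle will be proving such a refined Shearer inequality. The $C_5$-free constraint is much less rigid than, say, girth $\geq 6$, and in particular allows arbitrarily many $C_4$'s --- which are the ``enemies'' that prevent any straightforward iteration of the local structural lemma. An alternative route, also not obvious, is to analyze the $\{C_3, C_5\}$-free random greedy edge process along the lines of Bohman-Keevash and Warnke and then transfer the typical independence-number estimate to every $\{C_3, C_5\}$-free graph via a container-type argument (Balogh-Morris-Samotij, Saxton-Thomason); this would require extending both the self-correcting martingale machinery and the container theorems to the $\{C_3, C_5\}$-free setting. I expect either path to demand substantially new ideas beyond those available in the present paper, which is consistent with the fact that the statement is left as a conjecture.
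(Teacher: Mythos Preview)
The statement is labeled a \emph{conjecture} in the paper and the paper gives no proof of it; the authors only remark that it would follow from a corresponding upper bound on $R(\{C_3,C_5\},K_n)$ and say explicitly that ``this is likely to be challenging, however, as cycle-complete Ramsey numbers are widely open when the cycle lengths are small and fixed.'' So there is nothing in the paper for your proposal to be compared against, and you are right to conclude at the end that the statement is left open.

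That said, your write-up is a research plan rather than a proof, and it is candid about this. A few comments on the plan itself. Your local structural observation about edges inside $N_2(v)$ is correct and is essentially the mechanism behind the Caro--Li--Rousseau--Zhang bound $R(C_5,K_t)=O(t^{3/2}/\sqrt{\log t})$; as you note, optimizing it only recovers the $\Omega(n^{2/3}\log^{1/3}n)$ lower bound already recorded in \Cref{lem3.rrr.rrb.yyr}, so no new ground is gained there. The proposed Shearer-type inequality with a $d(v)^{-1/3}$ rate is the genuinely missing piece, and you correctly identify the obstacle: $\{C_3,C_5\}$-freeness permits arbitrarily many $C_4$'s, which defeats the usual ``second neighborhoods are sparse'' bookkeeping that powers improvements over Shearer in high-girth settings. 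Your alternative route through the $\{C_3,C_5\}$-free process plus containers is also speculative; note in particular that a container argument transfers \emph{typical} behavior to an \emph{extremal} bound only with additional input (one would still need a supersaturation statement for independent sets in $\{C_3,C_5\}$-free graphs), so that path has its own gap. In short: your assessment that substantially new ideas are required is accurate and matches the paper's own position.
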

From our reduction to the function $g(n)$, this would follow from the corresponding upper bound on $R(\{C_3, C_5\}, K_n)$. This is likely to be challenging, however, as cycle-complete Ramsey numbers are widely open when the cycle lengths are small and fixed.

Lastly, we decided to stop our investigation at three forbidden patterns. Forbidding more patterns in many cases makes the problem of finding large $2$-colored cliques simple. For this reason (and also for the sake of brevity) we did not pursue this line further. Still, one of course may consider families of forbidden patterns of size four and larger.

\section*{Appendix}

\begin{proof}[Proof of \Cref{prop:2}]

We split the cases according to rainbow and monochromatic patterns:

\begin{itemize}\item{}  $\mathcal H$ contains no rainbow and no monochromatic pattern.  \\
Case 1: the majority color is the same, i.e., w.l.o.g. \rrb\rry.\\
Case 2: the majority color is different, say red and blue. 

Case 2.1: non-majority colors are both not yellow, i.e. \rrb\bbr.

Case 2.2: yellow is a non-majority color in one pattern, i.e. \rrb\bby.

Case 2.3: yellow is a non-majority color  in both patterns, i.e. \rry\bby.\\ 
This gives us \rrb\rry, \rrb\bbr, \rrb\bby, \rry\bby.

\item{} $\mathcal H$ contains a rainbow but no monochromatic triangle, w.l.o.g. \rby\rrb.

\item{} $\mathcal H$ contains a monochromatic triangle and no rainbow traingle,  w.l.o.g. \rrr. 
Then the second pattern  is \rrb, \bbr, \bby or \bbb.  

\item{} $\mathcal H$ contains a monochromatic triangle and a rainbow traingle w.l.o.g. \rrr, \rby. 
\end{itemize}\end{proof}

\begin{proof}[Proof of \Cref{prop:3}]
We split the cases according to rainbow and monochromatic patterns:
\begin{itemize}	
\item{} $\mathcal H$  contains no  rainbow and no   monochromatic triangle.\\
Case 1:  exactly two  patterns have the same majority  color - w.l.o.g. \rrb \rry, third pattern has different majority color, say blue.
Then the third  pattern is either \bbr or \bby.\\
Case 2: patterns have different majority color - w.l.o.g,  $rr*, bb*, yy*$.\\
Then either all non-majority colors are distinct, w.l.o.g. \rrb, \bby, \yyr, or  there are only two different non-majority colors, w.l.o.g \rrb, \bbr, \yyr.

\item{}$\mathcal H$ contains a rainbow and a  no monochromatic pattern.
Then, the other two patterns are listed in the first item of the proof of Proposition \ref{prop:2}.

\item{}$\mathcal H$ contains a monochromatic  and no rainbow  pattern.\\
Case 1: There are at least two monochromatic patterns, say we have \rrr  \bbb. Then all the options for the third pattern up to permutation of patterns are \yyy, \rrb, \rry, \yyr.\\
Case 2: There is only one monochromatic pattern, say \rrr and two non-monochromatic patterns. We have the following cases: 

Case 2.1: Both non-monochromatic triangles have majority color red: \rrr \rrb \rry. 

Case 2.2: Exactly one non-monochromatic triangle has majority color red, w.l.o.g. \rrb. Then all the options for the 3rd pattern are \bbr, \bby, \yyb, \yyr.

Case 2.3: None of the non-monochromatic triangles has majority color red. Then they either have the same majority color, w.l.o.g. \bbr \bby, or we have $bb* yy*$. \\
Then either both non-majority colors are red (\bbr \yyr), exactly one non-majority color is red (w.l.o.g. \bbr \yyb) or no non-majority color is red (\bby \yyb).

\item{}$\mathcal H$ contains a rainbow and a monochromatic pattern, w.l.o.g., \rrr and \rby.
Then the third pattern either has a red majority color, or other majority color, say blue. 
This gives the following options for the third pattern:  \rrb, \bbr, \bby, \bbb.
\end{itemize}
This completes the proof of \Cref{prop:3}.
\end{proof}

\begin{proof}[Proof of Claim 1 in \Cref{lem2.5}]
	Consider a permutation $\pi: [N] \rightarrow [N]$ chosen uniformly at random and apply it to $G$. Let $E_\pi = \{\pi(u)\pi(v): uv \in E(G)\}$. For each edge $e \in E(G)$ we say that $e$ is \emph{bad} if $e \in E_\pi$, and we let $X$ be the random variable counting bad edges. For each edge $e = uv \in E(G)$ there are $2|E(G)|(N-2)!$ permutations that can make $e$ bad. Hence
	\[
	\mathbb{P}(e \text{ is bad}) = \frac{2|E(G)|(N-2)!}{N!} = \frac{|E(G)|}{\binom{N}{2}}.
	\]
	Thus, $\mathbb{E}[X] = |E(G)|^2/ \binom{N}{2}$, and so there is a permutation $\sigma$ such that
	\[
	|E(G)\cap E_\sigma| \le \frac{|E(G)|^2}{\binom{N}{2}}.
	\]
\end{proof}

\begin{proof}[Proof of  \Cref{lem:rrr.rrb.bby-structure}]
	Let $c$ be an $\h$-avoiding coloring of $K_n$. 
	Start by partitioning the vertex set into blue/red cliques by greedily picking a maximal red/blue clique at each step. Let $\C$ be the set of these cliques. Note that there is a yellow edge between any two cliques from $\C$. Within each clique the red graph is a matching and the red edges between any two components also form a matching (otherwise we have $rrb$ or $rrr$ triangles).
	Let $\C'$ be the set of cliques from $\C$ on at least $4$ vertices each.\\
	
	Assume first  that at least $n/2$ vertices are spanned by cliques from $\C'$. 
	We shall show first that there is no blue edge between any two cliques from $\C'$.  Assume  $U,V\in \C'$ with $|U| \ge |V|$ and there is a blue edge $u'v$, $u'\in U$, $v\in V$.
	If there is a yellow edge $u''v$, for some $u''\in U$, then $u'u''$ must be red (since otherwise $u'u''v$ is a $bby$ triangle).   Let $u \in U - \{u',u''\}$.
	Since red forms a matching within $U$, we have $c(u''u)=c(u'u) = b$.
	Then $uv$ cannot be blue ($u''uv$ would induce a  $bby$ triangle), and it cannot be yellow ($u'vu$ would induce a $bby$ triangle), so it must be red. Since this holds for each $u \in U-\{u',u''\}$, we must have $|U| \le 3$ since $v$ cannot have two red neighbours in $U$, a contradiction to our assumption that $|U|\geq 4$. Thus, the assumption that there is a yellow edge is wrong, so all edges from $v$ to $U$ are blue or red. But then $U \cup \{v\}$ would form a larger red/blue clique and contradict how we chose $\C$ greedily. Thus, between any two cliques from $\C'$  there is no blue edge and red forms a matching between any two cliques from $\C'$. Thus our structural result follows by choosing at least half the vertices from each clique of $\C'$  so that these vertices induce a blue clique.\\
	
	Now assume that at least $n/2$ vertices are spanned by cliques from $\C-\C'$, i.e., cliques of size at most $3$ each.  We have that $|\C-\C'| \geq n/6$.  Each of the cliques from $\C-\C'$  either spans a blue triangle or  not.  Let $\C'' \subseteq \C-\C'$ be the set of cliques forming blue triangles. We distinguish the following cases: \\
	
	Case 1:  $|\C''|\geq n/12$. Since there is a yellow edge between any two  cliques from $\C$,  there can't be a blue edge between any two members of $\C''$, otherwise we create a $bby$ triangle. Thus, we can pick one vertex from each member of  $\C''$ and have a red/yellow clique of size at least $n/12 \in \Omega(n^{2/3})$. \\
	
	Case 2:  $|\C''| <n/12$, i.e., at least $n/12$ cliques from $\C-\C'$  do not span a blue triangle. Let $G$ be the subgraph spanned by vertices of $\C-(\C' \cup \C'')$ with the inherited coloring.  Assume that $G$ contains  a blue $C_5$. Then all edges within this cycle must be red (no blue $K_3$, no $bby$ triangle), so we have a red/blue $K_5$, which contains an $rrb$ triangle, a contradiction. Thus, the blue subgraph of $G$ is  $C_5$-free, and since  $R(C_5, K_k) = O(k^{3/2}/\sqrt{\log k})$ we have a red/yellow clique of a desired size.
	
\end{proof}

\begin{proof}[Proof of special case in \Cref{lem3.rrr.bbr.yyb} ]
	Let $n \equiv 2 \pmod{7}$, i.e., $n= 7k+2$ for an integer $k$. 
	We shall show that there is a $2$-colored clique of size at least $\ceil{3n/7}+1$. In order to do so, we first shall show that the sets $V_i$'s pairwise differ in size by  at most $1$.\\

	If there is an index $i'$, such that $|W_{i'}| \ge \ceil{3n/7}+1$, we are done, so assume $|W_i|\le \ceil{3n/7}$ for $i=0,\ldots,6$. 
	Now assume there is an index $i'$ such that $|W_{i'}| \le \floor{3n/7} - 1$. But then we have 
	$\sum_{i=0}^6 |W_i| \le \floor{3n/7} - 1 + 6 \ceil{3n/7} =   \floor{3(7k+2)/7}-1+6\ceil{3(7k+2)/7} = 21k + 5 < 21k+ 6 = 3n$, a contradiction.  Thus $|W_i|$ and $|W_j|$  differ by at most $1$ for $0 \le i< j \le 6$. Similarly, all $U_i$'s differ in size by at most $1$. \\
	
	Note that $W_i \cap W_{i+2} = V_{i+2} \cup  V_{i+4}$ for $i=0,\ldots,6$ (indices computed modulo $7$), so the symmetric difference $W_i\triangle W_{i+2} = V_i \cup V_{i+6}$. By the above observation, that means in particular that $|V_{i-1}|$ and $|V_{i}|$ differ by at most $1$, for each $i = 0, 1, \ldots, 6$.  Similarly, by considering two consecutive $U_i$'s, we see that 
	$|V_i|$ and $|V_{i+3}|$ differ in size by at most $1$. Then it is clear that $||V_0|-|V_2||\leq 2$.  Assume that  $|V_0|=t$ and 
	$|V_2|=t+2$, for some $t$. Then $|V_1|=t+1$ and $|V_3| \geq t+1$. Thus $|U_1|\geq 3t+4$ that implies that $3t+3 \leq |U_6| = |V_6| + t + t+1 $, that in turn implies that $|V_6| \geq t+2$. This contradicts the fact that $|U_6|$ and $|U_0|$ differ by at most $1$. 
	It shows that $||V_i|-|V_{i+2}||\leq 1$.  Together with the fact that $||V_i|-|V_{i+1}||\leq 1$ and $||V_i|-|V_{i+3}||\leq 1$, we see that any two set $V_i, V_j$, $i, j\in \{0, 1, \ldots, 6\}$ differ in size by at most $1$.
	Thus $V_i$'s have sizes either $\ceil{n/7}$ or $\floor{n/7}$.  Since $n = 7k+2$, there are exactly two parts  $V_i, V_j$ of sizes $\ceil{n/7}$.  No matter how they are located, there is a third part $V_g$ such that $V_i\cup V_j\cup V_g$ is either $W_m$ or $U_m$ for some $m$. This gives a two-colored clique on $2\ceil{n/7} + \floor{n/7} = \ceil{3n/7} +1$ vertices. 
\end{proof}

{\bf Acknowledgements} The authors thank two anonymous referees for many excellent comments, for spotting some mistakes, and for  suggestions improving the manuscript.

\begin{bibdiv} 
\begin{biblist}

	\bib{AKS}{article}{
			title={A note on Ramsey numbers},
			author={Ajtai, Mikl{\'o}s},
			author={Koml{\'o}s, J{\'a}nos}, 
			author={Szemer{\'e}di, Endre},
			journal={Journal of Combinatorial Theory, Series A},
			volume={29},
			number={3},
			pages={354--360},
			year={1980},
			publisher={Elsevier}
		}
	
	\bib{AR}{article}{
		title={Sharp bounds for some multicolor Ramsey numbers},
		author={Alon, Noga},
		author={R{\"o}dl, Vojt{\v{e}}ch},
		journal={Combinatorica},
		volume={25},
		number={2},
		pages={125--141},
		year={2005},
		publisher={Springer}
	}

\bib{B}{article}{
title ={The triangle-free process},
author ={Bohman, Tom},
journal={Advances in Mathematics},
volume={221},
number={5},
pages={1653--1677},
year={2009},
issn={001-8708},
}

\bib{BK}{article}{
	title={The early evolution of the H-free process},
	author={Bohman, Tom},
	author={Keevash, Peter},
	journal={Inventiones mathematicae},
	volume={181},
	number={2},
	pages={291--336},
	year={2010},
	publisher={Springer}
}
	
\bib{CL}{article}{
title={Asymptotic bounds for some bipartite graph: complete graph Ramsey numbers},
author={Caro, Yair},
author={Li, Yusheng},
author={Rousseau, Cecil C.},
author={Zhang, Yuming},
journal={Discrete Mathematics},
volume={220},
number={1-3},
pages={51--56},
year={2000},
publisher={Elsevier}
}

\bib{CSh}{article}{
	title={Ramsey problems with bounded degree spread},
	author={Chen, Guantao},
	author={Schelp, Richard H.},
	journal={Combinatorics, Probability and Computing},
	volume={2},
	number={3},
	pages={263--269},
	year={1993},
	publisher={Cambridge University Press}
}

\bib{C}{article}{
		title={The Erd\H{o}s--Hajnal conjecture—a survey},
		author={Chudnovsky, Maria},
		journal={Journal of Graph Theory},
		volume={75},
		number={2},
		pages={178--190},
		year={2014},
		publisher={Wiley Online Library}
	}

\bib{E}{article}{
	title={Some remarks on the theory of graphs},
	author={Erd\H{o}s, Paul},
	journal={Bulletin of the American Mathematical Society},
	volume={53},
	number={4},
	pages={292--294},
	year={1947}
}

\bib{EH}{article}{
	author={Erd\H{o}s, P.},
	author={Hajnal, A.},
	title={Ramsey-type theorems},
	note={Combinatorics and complexity (Chicago, IL, 1987)},
	journal={Discrete Appl. Math.},
	volume={25},
	date={1989},
	number={1-2},
	pages={37--52},
	issn={0166-218X},
}

\bib{EP}{article}{
	title={On a Quasi-Ramsey problem},
	author={Erd\H{o}s, Paul},
	author={Pach, J{\'a}nos},
	journal={Journal of Graph Theory},
	volume={7},
	number={1},
	pages={137--147},
	year={1983},
	publisher={Wiley Online Library}
}

\bib{ES}{article}{
title={A combinatorial problem in geometry},
author={Erd\H{o}s, Paul},
author={Szekeres, George},
journal={Compositio mathematica},
volume={2},
pages={463--470},
year={1935}
}

\bib{FGP}{article}{
	title={The Erd{\H{o}}s--Hajnal conjecture for rainbow triangles},
	author={Fox, Jacob},
	author={Grinshpun, Andrey},
	author={Pach, J{\'a}nos},
	journal={Journal of Combinatorial Theory, Series B},
	volume={111},
	pages={75--125},
	year={2015},
	publisher={Elsevier}
}

\bib{FMO}{article}{
	title={Rainbow generalizations of Ramsey theory: a survey},
	author={Fujita, Shinya},
	author={Magnant, Colton},
	author={Ozeki, Kenta},
	journal={Graphs and Combinatorics},
	volume={26},
	number={1},
	pages={1--30},
	year={2010},
	publisher={Springer}
}

\bib{FKS}{article}{
title={Nordhaus--Gaddum-type Theorems for decompositions into many parts},
author={F{\"u}redi, Zoltan},
author={Kostochka, Alexandr V.}, 
author={{\v{S}}krekovski, Riste},
author={Stiebitz, Michael},
author={West, Douglas B.},
journal={Journal of Graph Theory},
volume={50},
number={4},
pages={273--292},
year={2005},
publisher={Wiley Online Library}
}

\bib{G}{article}{
	title={Transitiv orientierbare graphen},
	author={Gallai, Tibor},
	journal={Acta Mathematica Hungarica},
	volume={18},
	number={1-2},
	pages={25--66},
	year={1967},
	publisher={Akad{\'e}miai Kiad{\'o}, co-published with Springer Science+ Business Media BV~…}
}

\bib{GG}{article}{
	title={Combinatorial relations and chromatic graphs},
	author={Greenwood, Robert E. },
	author={Gleason, Andrew Mattei},
	journal={Canadian Journal of Mathematics},
	volume={7},
	pages={1--7},
	year={1955},
	publisher={Cambridge University Press}
} 

\bib{GW}{article}{
		title={Packing nearly optimal Ramsey $R(3, t)$ graphs},
		author={Guo, He},
		author={Warnke, Lutz},
		journal={Combinatorica},
		pages={1--41},
		year={2020},
		publisher={Springer}
	}

\bib{KPPR}{article}{
	title={A precise threshold for quasi-Ramsey numbers},
	author={Kang, Ross J.},
	author={Pach, J{\'a}nos},
	author={Patel, Viresh},
	author={Regts, Guus},
	journal={SIAM Journal on Discrete Mathematics},
	volume={29},
	number={3},
	pages={1670--1682},
	year={2015},
	publisher={SIAM}
}

\bib{K}{article}{
	title={The Ramsey number $R(3,t)$ has order of magnitude $t^2/\log t$},
	author={Kim, Jeong Han},
	journal={Random Structures \& Algorithms},
	volume={7},
	number={3},
	pages={173--207},
	year={1995},
	publisher={Wiley Online Library}
}

\bib{KS}{article}{
	title={Decomposition of $K_n$ into degenerate graphs},
	author={Klein, R.},
	author={Sch{\"o}nheim, J.},
	journal={Combinatorics and Graph Theory},
	pages={141--155},
	year={1992}
}

\bib{KZ}{article}{
title={Near packings of two graphs},
author={Konarski, Jerzy},
author={{\.Z}ak, Andrzej},
journal={Discrete Mathematics},
volume={340},
number={5},
pages={963--968},
year={2017},
publisher={Elsevier}
}

\bib{L}{article}{
		title={Highly connected monochromatic subgraphs of two-colored complete graphs},
		author={{\L}uczak, Tomasz},
		journal={Journal of Combinatorial Theory, Series B},
		volume={117},
		pages={88--92},
		year={2016},
		publisher={Elsevier}
}

\bib{M}{article}{
	title={Generalizing the Ramsey problem through diameter},
	author={Mubayi, Dhruv},
	journal={The Electronic Journal of Combinatorics},
	number={R41},
	year={2002}
}

\bib{PT}{article}{
	title={Bipartite subgraphs of triangle-free graphs},
	author={Poljak, Svatopluk},
	author={Tuza, Zsolt},
	journal={SIAM Journal on Discrete Mathematics},
	volume={7},
	number={2},
	pages={307--313},
	year={1994},
	publisher={SIAM}
}

\bib{R}{article}{
		title={On a Problem of Formal Logic},
		author={Ramsey, FP},
		journal={Proceedings of the London Mathematical Society},
		volume={2},
		number={1},
		pages={264--286},
		year={1930},
		publisher={Wiley Online Library}
}

\bib{S}{article}{
title={The independence number of dense graphs with large odd girth},
author={Shearer, James B.},
journal={The Electronic Journal of Combinatorics},
volume={2},
number={2},
year={1995}
}

\bib{Sp}{article}{
	title={Asymptotic lower bounds for Ramsey functions},
	author={Spencer, Joel},
	journal={Discrete Mathematics},
	volume={20},
	pages={69--76},
	year={1977},
	publisher={Elsevier}
}

\bib{T}{article}{
	title={On an extremal problem in graph theory},
	author={Tur{\'a}n, Paul},
	journal={Matematikai \'es Fizikai Lapok (in Hungarian)},
	volume={48},
	pages={436--452},
	year={1941}
}

\bib{W}{article}{
title={The $C_\ell$-free process},
author={Warnke, Lutz},
journal={Random Structures \& Algorithms},
volume={44},
number={4},
pages={490--526},
year={2014},
publisher={Wiley Online Library}
}

\end{biblist} 
\end{bibdiv}

\end{document}